\newcommand{\mytitle}{Approximate Euclidean Steiner Trees}
\def\MT@register@subst@font{\MT@exp@one@n\MT@in@clist\font@name\MT@font@list
   \ifMT@inlist@\else\xdef\MT@font@list{\MT@font@list\font@name,}\fi}
\theoremstyle{plain}
\newtheorem{theorem}{Theorem}[section]
\newtheorem{lemma}[theorem]{Lemma}
\newtheorem{corollary}[theorem]{Corollary}
\newtheorem{conjecture}[theorem]{Conjecture}
\newtheorem{proposition}[theorem]{Proposition}
\theoremstyle{definition}
\newcommand{\setbuilder}[2]{\left\{#1\colon#2\right\}}
\newcommand{\set}[1]{\left\{#1\right\}}
\newcommand{\epsi}{\varepsilon}
\renewcommand{\leq}{\leqslant}
\renewcommand{\geq}{\geqslant}
\newcommand{\norm}[1]{\left\lVert#1\right\rVert}
\newcommand{\abs}[1]{\left\lvert#1\right\rvert}
\newcommand{\conj}[1]{\overline{#1}}
\newcommand{\myangle}{\sphericalangle}
\newcommand{\Ray}[1]{\overrightarrow{#1}}
\newcommand{\arc}[1]{\wideparen{#1}}
\newcommand{\length}[1]{\lvert#1\rvert}
\newcommand{\numbersystem}[1]{\mathbb{#1}}
\newcommand{\bC}{\numbersystem{C}}
\newcommand{\bN}{\numbersystem{N}}
\newcommand{\bR}{\numbersystem{R}}
\newcommand{\bZ}{\numbersystem{Z}}
\newcommand{\collection}[1]{{\mathcal#1}}
\newcommand{\CA}{\collection{A}}
\newcommand{\CT}{\collection{T}}
\DeclareMathOperator{\diam}{diam}
\newcommand{\define}[1]{%
    \emph{#1}%
}
\title{\mytitle}
\author{Charl Ras\thanks{School of Mathematics and Statistics, University of Melbourne. 
\href{mailto:cjras@unimelb.edu.au}{\texttt{cjras@unimelb.edu.au}}},\; Konrad J.\ Swanepoel\thanks{Corresponding author. Department of Mathematics, London School of Economics and Political Science
. 
    \href{mailto:k.swanepoel@lse.ac.uk}{\texttt{k.swanepoel@lse.ac.uk}}.
},\; Doreen Thomas\thanks{Department of Mechanical Engineering, University of Melbourne. 
\href{mailto:doreen.thomas@unimelb.edu.au}{\texttt{doreen.thomas@unimelb.edu.au}}.}}
\date{}
\begin{document}
\maketitle

\begin{abstract}
An approximate Steiner tree is a Steiner tree on a given set of terminals in Euclidean space such that the angles at the Steiner points are within a specified error $e$ from 120 degrees.
This notion arises in numerical approximations of minimum Steiner trees (W.~D.~Smith, Algorithmica, \textbf{7} (1992), 137--177).
We investigate the worst-case relative error of the length of an approximate Steiner tree compared to the shortest tree with the same topology.
Rubinstein, Weng and Wormald (J. Global Optim.\ \textbf{35} (2006), 573--592) conjectured that this relative error is at most linear in $e$, independent of the number of terminals.
We verify their conjecture for the two-dimensional case as long as the error $e$ is sufficiently small in terms of the number of terminals.
We derive a lower bound linear in $e$ for the relative error in the two-dimensional case when $e$ is sufficiently small in terms of the number of terminals.
We find improved estimates of the relative error for larger values of $e$, and calculate exact values in the plane for three and four terminals.

\smallskip\noindent\textbf{Keywords:} approximate Steiner tree, Euclidean Steiner problem, shortest networks, approximation

\smallskip\noindent\textbf{Mathematics subject classification:} Primary 90C35; Secondary 05C05, 90B10
\end{abstract}

\section{Introduction}
The Euclidean Steiner problem asks for a tree of shortest total length that interconnects a given collection of points or \emph{terminals} in Euclidean space.
For example, to interconnect the four vertices of a square in the plane, a shortest tree contains two further points apart from the four terminals (Fig.~\ref{fig:square}).
\begin{figure}[t]
\centering
\definecolor{ffqqqq}{rgb}{1.,0.,0.}
\begin{tikzpicture}[line cap=round,line join=round,>=triangle 45,x=1.0cm,y=1.0cm, scale=0.6]
\clip(1.15553890927703,3.050225419864017) rectangle (6.072027371278206,7.995634402229929);
\draw[gray] (1.6,3.5)-- (5.64,3.5);
\draw[gray] (5.64,3.5)-- (5.64,7.54);
\draw[gray] (5.64,7.54)-- (1.6,7.54);
\draw[gray] (1.6,7.54)-- (1.6,3.5);
\draw [line width=1.2pt,color=ffqqqq] (1.6,7.54)-- (2.7662475437630474,5.52);
\draw [line width=1.2pt,color=ffqqqq] (2.7662475437630474,5.52)-- (1.6,3.5);
\draw [line width=1.2pt,color=ffqqqq] (2.7662475437630474,5.52)-- (4.473752456236958,5.52);
\draw [line width=1.2pt,color=ffqqqq] (4.473752456236958,5.52)-- (5.64,7.54);
\draw [line width=1.2pt,color=ffqqqq] (4.473752456236958,5.52)-- (5.64,3.5);
\begin{scriptsize}
\draw [fill=black] (1.6,3.5) circle (1.5pt);
\draw [fill=black] (5.64,3.5) circle (1.5pt);
\draw [fill=black] (5.64,7.54) circle (1.5pt);
\draw [fill=black] (1.6,7.54) circle (1.5pt);
\draw [fill=ffqqqq] (2.7662475437630474,5.52) circle (2.5pt);
\draw [fill=ffqqqq] (4.473752456236958,5.52) circle (2.5pt);
\end{scriptsize}
\end{tikzpicture}
\caption{Minimum Steiner tree (in red) of the vertices of a square}\label{fig:square}
\end{figure}
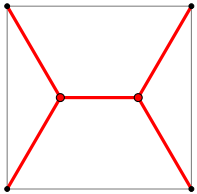
Such a shortest tree is called a \define{minimum Steiner tree} on the given collection of terminals, and the additional points are called \emph{Steiner points}.
The Steiner problem is well studied, especially in the plane.
An overview of the extensive literature on this problem can be found in the monographs of Hwang, Richards and Winter \cite{HRW}, Cieslik \cite{Cieslik}, Pr\"omel and Steger \cite{PS}, and the recent Brazil and Zachariasen \cite{BZ}.
For more on the history of the problem, see Boltyanski, Martini, and Soltan \cite{BMS} and the recent Brazil, Graham, Thomas, and Zachariasen \cite{BGTZ}.

It is well known that a minimum Steiner tree in Euclidean space has maximum degree three, that the Steiner points always have degree three, and that each angle spanned by two edges with a common endpoint is at least $120$ degrees, and exactly $120$ degrees at each Steiner point \cite[Section~6.1]{HRW}.
In the plane, there is a ruler-and-compass construction of a minimum Steiner tree once the graph structure (or \emph{topology}) is known.
This construction, also known as the Melzak algorithm \cite{Melzak}, can be done in linear time \cite{Hwang}.
On the other hand, determining the topology of a minimum Steiner tree is hard.
There is a super-exponential number of different topologies \cite{GP}, and it is already NP-hard to decide whether a given set of points in the plane has a Steiner tree of length smaller than a given length \cite{GGJ}.
On the other hand, the GeoSteiner package of Warme, Winter and Zachariasen quickly finds minimum Steiner trees on a relatively large number of points in the plane \cite{JWWZ}.

There are polynomial-time approximation schemes to calculate minimum Steiner trees in Euclidean space (Arora \cite{A} and Mitchell \cite{M}; see also \cite{RS}).
However, for the actual implementation of these schemes, there has been progress so far only for certain planar problems \cite{TM}.
A major obstacle in the implementation of these schemes for higher-dimensional problems is that their time complexity depends doubly exponentially on the dimension, and there is some evidence that this is unavoidable \cite{Trevisan}.

In higher dimensions, the Steiner points are not necessarily constructible anymore, and finding the optimal Steiner points results in solving high-degree algebraic equations, or solving a convex optimisation problem numerically \cite{Smith}.
See the papers \cite{BTW, FBWZ, GP, vLA, RTW, RWW, ST, Smith} for work on finding minimum Steiner trees in Euclidean spaces of dimension at least $3$.
We mention that Steiner trees in $3$-space have been considered in theoretical investigations of multiquarks in particle physics \cite{BC}, and in higher dimensions have been used to determine phylogenetic trees \cite{BTNWWZ}.

One problem arising from a numerical approach is that of estimating how close an approximation is to a locally minimum Steiner tree with a given Steiner topology.
Rubinstein, Weng and Wormald \cite{RWW} studied the relative error in the length of an approximate Steiner tree in terms of how far the angles at Steiner points deviate from $120$ degrees.
This paper is a further contribution to this topic.

Before we can give an exact definition of the relative error, we have to introduce our terminology and notation in the next section.
Then in Section~\ref{section:problem} we define the relative error and formulate the main conjectures from \cite{RWW}.
Our results are stated and summarized in Section~\ref{section:results}.
Section~\ref{section:monotone} is a brief discussion of the monotonicity of the relative error as the number of terminals increases.
In Section~\ref{section:large}, we prove our results for large relative errors.
For small relative errors, we subdivide the proofs into a section on upper bounds (Section~\ref{section:small}) and lower bounds (Section~\ref{section:construction}).
We conclude in Section~\ref{section:conclusion} with some remarks.
There are two tedious induction proofs of results in Section~\ref{section:construction} that are presented in an Appendix.

\section{Terminology}
We define a \define{Steiner topology for $n$ terminals} to be a tree $\CT$ with $n$ special vertices $t_1,\dots,t_n$, called \define{terminals}, all of degree at most $3$, and all other vertices, called \define{Steiner points}, of degree exactly $3$.
A Steiner topology is \define{full} if all terminals have degree $1$.
Let $N=\{p_1,\dots,p_n\}$ be a family of $n$ points in $\bR^d$ (allowing repeated points).
A \define{Steiner tree} $T$ with topology $\CT$ for $N$ is a representation of $\CT$ in $\bR^d$, with each $t_i$ represented by $p_i$, each Steiner point of $\CT$ represented by an arbitrary point of $\bR^d$, and edges represented by straight-line segments.
We say that such a Steiner tree \define{interconnects} $N$.
A Steiner tree is \emph{full} if its topology is full.
We allow Steiner points to coincide with each other and with terminals, hence for edges incident to a Steiner point to be of length $0$.
An edge of length $0$ is called \define{degenerate}, and we say that a Steiner tree that contains a degenerate edge is \define{degenerate}.
We allow edges to intersect each other.

The (convex) angle determined by two edges $xy$ and $xz$ with a common endpoint $x$ is denoted $\myangle yxz$.
Its angular measure is also denoted by $\myangle yxz$, and we assume that angular measures are in the interval $[0,\pi]$.
We use radians for angular measure throughout the paper, except in a few places where it will be clear that we use degrees.

We denote the Euclidean length of an edge $pq$ by $\length{pq}$.
The \define{length} $L(T)$ of a tree $T$ is the sum of the Euclidean lengths of its edges.
Among all the trees that interconnect a given set $N$ of terminals there is at least one tree of minimum length, which we call a \define{minimum Steiner tree} of $N$.
We define a \define{locally minimum Steiner tree} to be a non-degenerate tree with a Steiner topology and with all angles spanned by the edges at each vertex at least~$2\pi/3$.
Since each Steiner point in a Steiner topology has degree $3$, it easily follows (in any dimension) that each of the three angles at a Steiner point is exactly $2\pi/3$ and that the three edges incident to the Steiner point are coplanar.
As mentioned above, any minimum Steiner tree is a locally minimum Steiner tree.
A \define{full minimum Steiner tree} is a minimum Steiner tree that is also full.

We denote the largest integer not greater than $x$ by $\lfloor x\rfloor$.

\section{Formulation of the Problem, Conjectures and Previous Results}\label{section:problem}
In \cite{RWW} the following notions were introduced.
Let $\epsi\geq 0$ be given.
An \define{$\epsi$-approximate Steiner tree} is a tree with a Steiner topology, with all the angles spanned by the edges at each Steiner point belonging to the interval $[2\pi/3-\epsi,2\pi/3+\epsi]$.
Note that a $0$-approximate Steiner tree is the same as a locally minimum Steiner tree.
(In \cite{RWW} the distinction was made between a \emph{pseudo-Steiner point} of an $\epsi$-approximate Steiner tree and a \emph{Steiner point} of a locally minimum Steiner tree.
For the sake of simplicity we make no such distinction and use the term \emph{Steiner point} for both.)

For $d\geq 2$, $n\geq 3$ and $\epsi\geq 0$, let $\CA_\epsi^d(n)$ denote the set of all full $\epsi$-approximate Steiner trees on $n$ terminals in~$\bR^d$, and let $\overline{\CA}_\epsi^d(n)$ denote the subset of all $T\in\CA_\epsi^d(n)$ for which the terminals have a minimum Steiner tree with the same topology as $T$.
In particular, $\CA_0^d(n)$ is the set of all full locally minimum Steiner trees on $n$ terminals in~$\bR^d$, and $\overline{\CA}_0^d(n)$ is the set of all full minimum Steiner trees on $n$ terminals in $\bR^d$.

Given a tree $T$ in $\bR^d$ with Steiner topology $\CT$, let $S(T)$ denote the shortest tree in $\bR^d$ on the terminals of $T$ with topology $\CT$, where we allow degenerate shortest trees.
Even though $S(T)$ is not necessarily a Steiner tree (see for instance \cite[Figure~1.7]{BZ}), it can be shown that $S(T)$ is always unique \cite[Section~4]{GP}.

Rubinstein, Weng and Wormald \cite{RWW} defined the following two quantities:
\[ F_d(\epsi,n) = \sup\setbuilder{\frac{L(T)-L(S(T))}{(L(S(T))}}{T\in\CA_\epsi^d(n)}\]
and
\[ \overline{F}_d(\epsi,n) = \sup\setbuilder{\frac{L(T)-L(S(T))}{(L(S(T))}}{T\in\overline{\CA}_\epsi^d(n)},\]
and made the following conjectures in the case $d\geq 3$.
Although they did not consider the $2$-dimensional case, we include it, as it is also still open, and most of our results will be in the plane.
\begin{conjecture}\label{conj1}
For any $d\geq 2$ there exist $\epsi_0>0$ and $C_d>0$ such that for all $\epsi\in(0,\epsi_0)$ and $n\in\bN$, $F_d(\epsi,n) < C_d\epsi$.
\end{conjecture}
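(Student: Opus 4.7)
The plan is to exploit the convexity of the length function $L(s_1,\dots,s_k)$ obtained by fixing the topology $\CT$ and terminals and varying the positions $s_i\in\bR^d$ of the Steiner points. Being a sum of Euclidean norms, $L$ is convex and $S(T)$ is its minimizer. The gradient of $L$ at~$s_i$ is the sum of the three unit vectors pointing outward from $s_i$ to its neighbours, and it vanishes at a locally minimum Steiner tree. Writing the three unit vectors around a Steiner point of $T\in\CA_\epsi^d(n)$ as small perturbations of a regular $2\pi/3$ configuration gives $\norm{\nabla_{s_i}L(T)}=O(\epsi)$ uniformly in~$i$.

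Combining this with the convexity inequality yields
\[
L(T)-L(S(T))\leq\ipr{\nabla L(T)}{T-S(T)}\leq C\epsi\sum_{i=1}^{k}\norm{s_i-s_i^*},
\]
where $s_i^*$ is the position of the $i$-th Steiner point in $S(T)$. The sum can be bounded naively by $O(n\,L(S(T)))$, since the Steiner points of both $T$ and $S(T)$ remain close to the convex hull of the terminals, but this yields only the too-weak inequality $L(T)-L(S(T))\leq Cn\epsi\,L(S(T))$. For $n=3$ a second-order refinement closes the gap: the Hessian of $L$ at its unique Steiner point is positive definite with smallest eigenvalue of order $1/L(S(T))$, so $\norm{s_1-s_1^*}=O(\epsi\,L(S(T)))$ and the relative error is in fact $O(\epsi^2)$. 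For general~$n$, however, the Hessian of $L$ at $S(T)$ can become arbitrarily ill-conditioned along near-degenerate directions, and a termwise second-order argument does not scale.

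In the planar case Melzak's construction offers a natural route around this ill-conditioning. For a locally minimum Steiner tree the total length equals the distance between the two vertices left after iteratively collapsing cherries to equilateral-triangle apices. For an $\epsi$-approximate Steiner tree the same process produces not a straight segment but a polygonal chain of $k=n-2$ segments joined at the Steiner points, each turn deviating from a straight angle by $O(\epsi)$. The law of cosines gives a slack of $\Theta(ab\epsi^2/(a+b))$ at a turn between adjacent segments of lengths $a$ and $b$, and summing yields $L(T)-L(S(T))=O(n\epsi^2\,L(S(T)))$. This already proves the conjecture whenever $n\epsi$ is bounded, and seems to match the regime treated later in the paper.

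The main obstacle in extending this to the full conjecture is to remove the factor~$n$. One would need either a structural reason why the cherries cannot simultaneously realise their worst-case slack — tracking the cumulative turning angle of the Melzak polygonal chain, which is globally constrained to be $O(\epsi)$ rather than $O(n\epsi)$, looks like the most promising avenue — or a cancellation argument for the signed displacements in the convexity bound above. In higher dimensions Melzak's construction is unavailable, so one would instead have to combine the convexity argument with Smith's algebraic description of the Steiner points as implicit functions of the terminals, and I expect this to be the principal technical difficulty.
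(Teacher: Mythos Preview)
This statement is a \emph{conjecture} in the paper and remains open; the paper proves only partial results (the planar case when $\epsi=O(1/n^2)$, via Theorem~\ref{thm:plane} and Corollary~\ref{cor:plane}). You correctly recognise that you have not proved it either, but two concrete steps in your sketch are wrong, and the second one blocks exactly the avenue you identify as ``most promising''.

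First, your Melzak estimate $L(T)-L(S(T))=O(n\epsi^2\,L(S(T)))$ is off by a factor of $n$. The unfolded chain has $2n-3$ segments (one per edge of the tree), not $n-2$, but more importantly the local slacks $\Theta(ab\epsi^2/(a+b))$ at individual bends do \emph{not} add up to the global slack $L(\text{chain})-\length{\text{endpoints}}$. Take the chain with $m$ unit segments and all turns equal to $\epsi$: your sum of local slacks gives $(m-1)\epsi^2/4$, while the true slack is $m-\sin(m\epsi/2)/\sin(\epsi/2)\sim m^3\epsi^2/24$. The correct control is via the maximum cumulative turn $\kappa$ (Lemma~\ref{schmidt} in the paper), which satisfies only $\kappa\leq(n-2)\epsi$, yielding relative error $O(n^2\epsi^2)$. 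This is what the paper proves, and it gives the conjecture only for $\epsi=O(1/n^2)$, not for $n\epsi$ bounded.

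Second, your hope that the cumulative turning angle of the unfolded chain is ``globally constrained to be $O(\epsi)$ rather than $O(n\epsi)$'' is false. The angle errors at distinct Steiner points are independent, and nothing forces cancellation along the chain. Indeed, the paper's lower-bound construction (Theorem~\ref{theorem:lowerbound}) builds $\epsi$-approximate trees on $2^k+1$ terminals with $F_2(\epsi,n)=\upOmega((\log n)^2\epsi^2)$; if the cumulative turn were $O(\epsi)$ with an $n$-independent constant, Lemma~\ref{schmidt} would give relative error $O(\epsi^2)$ uniformly in $n$, contradicting this. So the Melzak route cannot by itself close the gap, and the convexity/Hessian approach you outline for $d\geq3$ faces, as you note, the ill-conditioning obstacle. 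The conjecture is genuinely open.
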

\begin{conjecture}
For any $d\geq 2$ there exist $\epsi_0>0$ and $C_d>0$ such that for all $\epsi\in(0,\epsi_0)$ and $n\in\bN$, $\overline{F}_d(\epsi,n) < C_d\epsi$.
\end{conjecture}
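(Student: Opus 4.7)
The plan is to exploit the convexity of the length functional $L$ as a function of the Steiner point coordinates, combined with the key feature of the restriction to $\overline{\CA}_\epsi^d(n)$: the corresponding $T^*$ is a genuine minimum Steiner tree with the same topology as $T$, so $T^*$ is non-degenerate, all Steiner angles of $T^*$ are exactly $2\pi/3$, all edges of $T^*$ have positive length, and $T^*$ is the unique minimizer of $L$ on the configuration space of Steiner points. After normalizing so that $L(T^*)=1$, the conjecture becomes a uniform stability statement: perturbing $T^*$ so that each Steiner angle stays within $\epsi$ of $2\pi/3$ increases the length by at most $C_d\epsi$, with $C_d$ independent of $n$.

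The first step is a per-Steiner-point gradient bound. If $s$ is a Steiner point of $T$ with neighbours $u,v,w$, then $\nabla_s L(T)$ is the sum of the three unit vectors from $s$ to $u$, $v$, $w$. Since the pairwise angles $\myangle usv$, $\myangle usw$, $\myangle vsw$ all lie in $[2\pi/3-\epsi,2\pi/3+\epsi]$, a short trigonometric calculation yields $\norm{\nabla_s L(T)} \leq c\epsi$ with $c$ an absolute constant. By convexity of $L$ and the optimality $\nabla L(T^*)=0$,
\[ L(T)-L(T^*) \leq \ipr{\nabla L(T)}{T-T^*} \leq c\epsi \sum_s \norm{s-s^*}, \]
so the task reduces to establishing $\sum_s \norm{s-s^*} \leq C_d' L(T^*)$ uniformly in $n$.

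Such a displacement bound would follow from a global quadratic-growth inequality for $L$ at $T^*$. The Hessian block at a Steiner point $s^*$ is $\sum_{i=1}^{3}(I-\hat{e}_i\hat{e}_i^{\mathsf{T}})/\ell_i^*$, where the $\hat{e}_i$ are the three edge directions at $s^*$ (meeting at $2\pi/3$) and the $\ell_i^*$ are their lengths; its smallest nonzero eigenvalue is of order $1/\ell_{\max}(s^*)$. If the per-vertex Hessian bounds could simply be added, one would even obtain $O(\epsi^2)$ for the relative error, stronger than the conjecture. The main obstacle is that the per-vertex bound treats the three neighbours of $s$ as fixed at their positions in $T^*$, whereas they are themselves displaced Steiner points: the global Hessian $H$ of $L$ at $T^*$ couples the motions of adjacent Steiner points, and can a priori have small eigenvalues corresponding to low-energy ``bending modes'' of long paths of Steiner points connected by short edges, producing large $\sum_s\norm{s-s^*}$ at small length cost.

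The hard step, on which the conjecture hinges, is therefore to establish a uniform spectral gap for $H$: a lower bound on the smallest nonzero eigenvalue of $H$ by a dimension-dependent constant times $1/L(T^*)$, with no dependence on the topology or on $n$. The plan is to induct on the tree structure, using the exact $2\pi/3$ angle condition at every Steiner point of $T^*$ — which is precisely what genuinely distinguishes the $\overline{F}_d$ conjecture from the $F_d$ conjecture — to prevent accumulation of low modes along paths. In the planar case the natural inductive lever is the Melzak construction: contracting a pair of adjacent Steiner-to-terminal edges into a single edge to an equivalent terminal reduces $n$ by one and preserves $L(T^*)$, and one would need to show it reduces the spectral gap of $H$ by at most a bounded factor. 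In higher dimensions a substitute for the Melzak contraction is required; one candidate is a telescoping computation of $\ipr{v}{Hv}$ along the tree combined with a Poincar\'e-type inequality for the combinatorial tree weighted by edge lengths. Producing such a uniform Poincar\'e-type estimate — and verifying that the $2\pi/3$ angle condition really does close the argument — is the technical core, and is precisely where the conjecture resists a direct proof for unbounded $n$.
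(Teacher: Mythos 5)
The statement you are addressing is one of the paper's two \emph{open conjectures} (due to Rubinstein, Weng and Wormald): the paper does not prove it, explicitly remarks that it seems difficult to bound $\overline{F}_d$ any better than $F_d$, and only establishes partial results --- in the plane, $F_2(\epsi,n)\leq 1/\cos\tfrac{(n-2)\epsi}{2}-1$, which yields the conjectured linear bound only under the restriction $\epsi=O(1/n^2)$ (Theorem~\ref{thm:plane} and Corollary~\ref{cor:plane}), together with bounds exponential in $n$ for fixed $\epsi$ (Theorem~\ref{upperbound}). Your proposal is likewise not a proof: the entire argument funnels into the displacement bound $\sum_s\norm{s-s^*}\leq C_d'L(S(T))$, which you propose to derive from an $n$-independent spectral gap for the Hessian of $L$ at $S(T)$, and you yourself identify this as the unestablished ``technical core.'' Since every other step is routine (convexity of $L$, first-order optimality at $S(T)$), nothing is actually proved.

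Two concrete obstacles deserve naming. First, the per-vertex gradient bound $\norm{\nabla_sL(T)}\leq c\epsi$ is correct only for $d=2$: in the plane the three angles at a Steiner point sum to $2\pi$ for small $\epsi$, so the angular errors cancel to first order and $\norm{\hat e_1+\hat e_2+\hat e_3}^2=2\sum_{i<j}\bigl(\cos\theta_{ij}+\tfrac12\bigr)=O(\epsi^2)$; but for $d\geq3$ all three angles can simultaneously equal $2\pi/3-\epsi$, giving $\norm{\nabla_sL(T)}\asymp\sqrt{\epsi}$, which already caps your method at $O(\sqrt{\epsi})$ in higher dimensions even if the displacement bound held. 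Second, the uniform spectral gap you hope for is contradicted in spirit by the paper's own lower-bound construction (Section~\ref{section:construction}): the binary tree $T_k$ on $2^k+1$ terminals has relative error $\Omega(k^2\epsi^2)$, so no inequality of the form $L(T)-L(S(T))\leq C_d\epsi^2L(S(T))$ can hold uniformly in $n$, which means the ``bending modes along chains of Steiner points'' you flag really do degrade the quadratic-growth constant by a factor growing with $n$. The conjecture survives because $k^2\epsi^2\leq\epsi$ in the regime $\epsi\leq k^{-2}$ where that construction is valid, but any proof along your lines must quantify exactly how the gap degrades and show the degradation is compensated by the gradient bound --- and neither the paper nor your proposal does this.
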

The second conjecture is weaker than the first, but it seems difficult to deduce an upper bound for $\overline{F}_d$ that cannot already be deduced for $F_d$.
Rubinstein, Weng and Wormald \cite{RWW} showed that for $\epsi<1/n^2$, $F_d(\epsi,n)\leq C_d (\epsi\log n + \epsi^2 n^3)$.
They also consider larger values of~$\epsi$.

\section{Overview of New Results}\label{section:results}
Our results are summarized in Table~\ref{table1}.
\begin{table}\def\arraystretch{1}
\centering
\setlength\minrowclearance{7pt}
\begin{tabular}{|l|l|l|} \hline
\qquad Range & \qquad Bound & \\[1mm] \hline\hline
$\epsi=O(1/n^2)$ & $F_2(\epsi,n) = O(\epsi)$ & Theorem~\ref{thm:plane}\\[2mm] \hline
$\epsi < \dfrac{\pi}{n-2}$ & $F_2(\epsi,n) \leq \dfrac{1}{\cos \tfrac{(n-2)\epsi}{2}} - 1 =O(n^2\epsi^2)$ & Theorem~\ref{thm:plane}\\[5mm] \hline
$\epsi < \dfrac{1}{(\log n)^2}$ & $F_2(\epsi,n)\geq G_2(\epsi,n) = \upOmega((\log n)^2\epsi^2)$ & Theorem~\ref{theorem:lowerbound}\\[5mm] \hline
$\epsi\leq\pi/6$ & $F_2(\epsi,n)\leq 2n-4$ & Proposition~\ref{plane} \\[2mm] \hline
$0 < \epsi < 2\pi/3$ & $F_d(\epsi,n) = O\left(\left(\dfrac{\cos(\epsi/2)}{\sin(\pi/3-\epsi/2)}\right)^n\right)$ & Theorem~\ref{upperbound} \\[5mm] \hline
$\epsi=\pi/3$ & $F_2(\epsi,n)\geq G_2(\epsi,n) = \upOmega(\log n)$ & Theorem~\ref{polynomial} \\[2mm] \hline
$\pi/3<\epsi<2\pi/3$ & $F_2(\epsi,n)\geq G_2(\epsi,n) = \upOmega(n^{c(\epsi)})$ & \\
& \qquad where $0< c(\epsi)\nearrow\infty$ as $\epsi\to2\pi/3$ & Theorem~\ref{polynomial} \\[2mm] \hline
$0<\epsi<\pi/3$ & $F_2(\epsi,3)=G_2(\epsi,3)=\dfrac{1}{\cos(\epsi/2)} - 1$ & Proposition~\ref{prop:3unfold}\\ 
& $F_2(\epsi,4)=G_2(\epsi,4) = \dfrac{1}{\cos\epsi} - 1$ & Proposition~\ref{prop:4unfold} \\[4mm] \hline
\end{tabular}
\caption{Summary of results}\label{table1}
\end{table}
Our first main result is an upper bound for the relative error in the plane.
\begin{theorem}\label{thm:plane}
If $n\geq 3$ and $0<\epsi<\pi/(n-2)$, then \[F_2(\epsi,n)\leq \frac{1}{\cos \tfrac{(n-2)\epsi}{2}}-1.\]
\end{theorem}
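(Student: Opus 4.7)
The plan is to represent $T$ as a polygonal path of length exactly $L(T)$ between two Melzak equivalent points $P, Q$ in the plane, and then to bound this length against $|PQ|$ (and hence against $L(S(T))$) via a projection argument.

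First I apply the Melzak reduction to the Steiner topology $\CT$: cherry by cherry, $\CT$ is reduced to a single edge between two equivalent points $P$ and $Q$.  Melzak's classical lower bound \cite{Melzak} (extended to degenerate minima in \cite{GP}) then gives $L(S(T)) \geq |PQ|$.  For $T$ itself I run the same reduction in a length-preserving way: at the step processing cherry $(a,b)$ at Steiner point $s$, with third neighbour $s'$ and standard equivalent point $c'$, I introduce the point $\tilde c$ on the ray $\Ray{sc'}$ at distance exactly $|sa|+|sb|$ from $s$, and leave $s$ in the reduced tree as a degree-two ``bend point'' connecting $\tilde c$ to $s'$.  This step preserves the total length, and after $n-2$ such reductions the tree has been transformed into a polygonal path in the plane from $P$ to $Q$ of length exactly $L(T)$, with $n-2$ bend points located at the original Steiner points of $T$ in their original positions.

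Next I bound the bend $\eta_s := \pi - \myangle s'sc'$ at each bend point $s$.  In the exact $120^\circ$ case, the inscribed-angle theorem applied to the circle through $a,s,b,c'$ (which exists since $\myangle asb = 2\pi/3$ and $\myangle ac'b = \pi/3$) together with $|ac'|=|bc'|$ shows that $sc'$ is the bisector of $\myangle asb$ and hence opposite to $ss'$, giving $\eta_s = 0$.  A first-order perturbation for $\epsi$-approximate angles, combined with the zero-sum identity $\delta_1+\delta_2+\delta_3=0$ on the three angular deviations at $s$, shows $|\eta_s|\leq\epsi$.

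Finally, the $n-1$ successive segment directions of the polygonal path differ by $\eta_1,\dots,\eta_{n-2}$, so they span an angular range of at most $\sum_k|\eta_k| \leq (n-2)\epsi$.  Choosing a reference unit vector $\vec v$ in the median direction makes every segment direction deviate from $\vec v$ by at most $(n-2)\epsi/2$, which by the hypothesis $\epsi < \pi/(n-2)$ is less than $\pi/2$.  Projecting the polygonal path onto $\vec v$ then yields
\[
L(T)\cos\tfrac{(n-2)\epsi}{2} \;\leq\; (Q-P)\cdot\vec v \;\leq\; |PQ| \;\leq\; L(S(T)),
\]
which rearranges to $F_2(\epsi,n) \leq \sec((n-2)\epsi/2) - 1$.

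The main technical obstacle is the bend bound $|\eta_s|\leq\epsi$ of the third step.  The naive estimate, coming from treating the bisector shift $\delta_1/2$ and the direction of $ss'$ independently, gives only the weaker bound $3\epsi/2$; recovering the factor of $\epsi$ requires careful use of the zero-sum identity on the three angular deviations at $s$, together with a separate estimate showing that the off-bisector shift of $c'$ caused by an asymmetric cherry $|sa|\neq|sb|$ does not push $\eta_s$ outside $[-\epsi,\epsi]$.
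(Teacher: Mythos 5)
Your overall strategy -- unfold $T$ into a length-preserving polygonal path whose endpoints control $L(S(T))$ via Melzak, bound the angular spread of the segment directions by $(n-2)\epsi$, and finish with a projection/reverse-triangle-inequality estimate -- is the same as the paper's. But your construction of the path has a genuine flaw. You place $\tilde c$ on the ray $\Ray{sc'}$ at distance $\length{sa}+\length{sb}$ from $s$, whereas Lemma~\ref{lemma:triangle} gives $\length{sc'}\leq\length{sa}+\length{sb}$ with equality only when $s$ lies on the minor arc of the circumcircle of the equilateral triangle on $ab$; for an $\epsi$-approximate tree this is strict, so $\tilde c$ lies strictly beyond the equivalent point $c'$. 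Consequently the endpoints of your path are \emph{not} the Melzak equivalent points, and the inequality $\length{PQ}\leq L(S(T))$ -- which Melzak's argument only gives for the true equivalent points -- does not hold for your $P$ and $Q$; indeed $\length{PQ}$ typically exceeds the Melzak distance, which breaks the chain of inequalities in exactly the wrong direction. A second, related defect: the subsequent Melzak steps must be performed on $c'$, not on $\tilde c$, so for $n\geq 4$ the dangling segments $s\tilde c$ do not thread together into a single path from one equivalent point to the other. The repair is the rotation argument from the proof of Lemma~\ref{lemma:triangle}: unfold the cherry into \emph{two} path segments $s\to x'\to c'$ with $\length{sx'}=\length{sa}$ and $\length{x'c'}=\length{sb}$, so the path has $2n-3$ segments, preserves length, and terminates exactly at the equivalent point. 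This is what the paper does.

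The second gap is the one you flag yourself: the per-vertex bend bound $\abs{\eta_s}\leq\epsi$. This requires knowing the direction of $\Ray{sc'}$, which is the bisector of $\myangle asb$ only when $\length{sa}=\length{sb}$; in the asymmetric case the deviation of $\myangle asc'$ from $\pi/3$ is not $\delta_3/2$ but some value depending on $\length{sa}/\length{sb}$, and pinning it down (you would need to show it always lies between $0$ and $\delta_3$) is a nontrivial geometric estimate that you do not supply. The paper's route avoids this entirely: in the rotation-based unfolding, the individual turns at the path vertices are the raw angle errors $\epsi_i$ (and certain signed combinations of them), and the quantity that matters for Lemma~\ref{schmidt} is not the individual turn but the \emph{sum} of turns between two edges, which telescopes to the sum of single angle errors at the Steiner points on the tree path between those edges -- at most $n-2$ terms, each of absolute value at most $\epsi$. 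No per-vertex estimate involving the position of the equivalent point is needed. As written, your proof is not complete; both the endpoint identification and the bend bound would have to be reworked along these lines.
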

The proof is in Section~\ref{section:small}.
As a consequence, Conjecture~\ref{conj1} holds in the plane if $\epsi$ is sufficiently small, depending on $n$.
\begin{corollary}\label{cor:plane}
If $0<\epsi<\pi/(n-2)$, then $F_2(\epsi,n) = O(n^2\epsi^2)$.
Consequently, if $\epsi = O(1/n^2)$ as $n\to\infty$, then $F_2(\epsi,n)=O(\epsi)$.
\end{corollary}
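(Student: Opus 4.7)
The plan is a short asymptotic estimation starting from Theorem~\ref{thm:plane}, which under the hypothesis $0<\epsi<\pi/(n-2)$ gives
\[ F_2(\epsi,n) \leq \sec\!\left(\tfrac{(n-2)\epsi}{2}\right) - 1. \]
The first step is to Taylor-expand the secant at the origin. Writing $\sec x - 1 = (1-\cos x)/\cos x$ and using $1-\cos x \leq x^2/2$ together with $\cos x$ being bounded below by a positive constant on any compact subinterval of $(-\pi/2,\pi/2)$, one obtains $\sec x - 1 = O(x^2)$ as $x\to 0$. Substituting $x = (n-2)\epsi/2$ then gives $F_2(\epsi,n) \leq O((n-2)^2\epsi^2) = O(n^2\epsi^2)$, establishing the first assertion of the corollary.

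For the second assertion, observe that $\epsi = O(1/n^2)$ is equivalent to $n^2\epsi = O(1)$. Multiplying through by $\epsi$ yields $n^2\epsi^2 = (n^2\epsi)\cdot\epsi = O(\epsi)$, so combining with the first assertion we conclude $F_2(\epsi,n) = O(\epsi)$, as required.

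There is essentially no obstacle to this calculation; it is a routine corollary. The only point worth flagging is that the $O(n^2\epsi^2)$ bound should be read in the asymptotic regime $(n-2)\epsi \to 0$, since the secant bound blows up as $(n-2)\epsi/2$ approaches $\pi/2$. Under the assumption $\epsi = O(1/n^2)$ we have $(n-2)\epsi = O(1/n) \to 0$, which places us comfortably inside the range where the bound $\sec x - 1 \leq Cx^2$ is uniform, so the chain of estimates is valid.
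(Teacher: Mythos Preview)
Your proposal is correct and follows exactly the route the paper intends: the corollary is stated without proof as an immediate consequence of Theorem~\ref{thm:plane}, and your Taylor expansion $\sec x - 1 = O(x^2)$ with $x=(n-2)\epsi/2$ is precisely the computation underlying the $O(n^2\epsi^2)$ in Table~\ref{table1}. Your caveat about needing $(n-2)\epsi$ bounded away from $\pi$ for the $O(n^2\epsi^2)$ estimate to be uniform is a fair observation that the paper leaves implicit.
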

In \cite{RWW} an example is given that shows that Conjecture~\ref{conj1} is sharp for each $d\geq 3$.
Our second main result, Theorem~\ref{theorem:lowerbound}, is a lower bound for $F_2$ that shows that Conjecture~\ref{conj1} is already sharp in the plane for sufficiently small $\epsi$.
\begin{theorem}\label{theorem:lowerbound}
For any $k\geq 1$, if $\epsi=c/k^2$ with $0<c<1$, then $F_2(\epsi,2^k+1) > \frac{c}{24}\epsi$.
Consequently, if $\epsi<(\log_2 n)^{-2}$, then $F_2(\epsi,n) =\upOmega((\log n)^2\epsi^2)$.
\end{theorem}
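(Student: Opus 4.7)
The plan is to exhibit, for each $k\geq 1$ and $c\in(0,1)$, an explicit $\epsi$-approximate full Steiner tree $T_k$ on $n=2^k+1$ terminals in $\bR^2$ (with $\epsi=c/k^2$) whose relative excess $(L(T_k)-L(S(T_k)))/L(S(T_k))$ exceeds $(c/24)\epsi$. I would take $T_k$ to have a balanced binary Steiner topology of depth $k$, built recursively by joining two rescaled copies of $T_{k-1}$ at a fresh root Steiner point together with one additional ``root terminal''. At every Steiner point the three incident edges are placed at angles $2\pi/3\pm\epsi$, arranged with the same sign along every root-to-leaf path so that the angular perturbations are \emph{coherent} rather than cancelling. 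Choosing the scalings so that the overall tree has unit diameter and all terminals lie in an $O(k\epsi)$-neighbourhood of a fixed reference line, one verifies directly from the recursion that every Steiner angle lies in $[2\pi/3-\epsi,2\pi/3+\epsi]$, so $T_k\in\CA_\epsi^2(2^k+1)$.

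Next, I would upper-bound $L(S(T_k))$. Because the terminals of $T_k$ all lie in a thin neighbourhood of a line, I would exhibit an explicit (degenerate) tree on these terminals with topology $\CT$ in which all but $O(1)$ Steiner points are collapsed onto terminals, giving a tree whose length is at most $L_0(1+\gamma\,k^2\epsi^2)$, where $L_0$ is the distance along the reference line between the two extreme terminals and $\gamma$ is an explicit constant. Since $L(S(T_k))$ is the minimum over all trees with topology $\CT$ on these terminals, this is an upper bound.

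For a matching lower bound on $L(T_k)$, I would derive a recursion for $L(T_k)$ from the construction and expand it to second order in $\epsi$. The coherent perturbations contribute a quadratic-in-$k\epsi$ excess length, yielding $L(T_k)\geq L_0(1+\beta\,k^2\epsi^2)$ for an explicit $\beta>\gamma$. Combining the two estimates,
\[\frac{L(T_k)-L(S(T_k))}{L(S(T_k))}\geq \frac{(\beta-\gamma)\,k^2\epsi^2}{1+\gamma\,k^2\epsi^2},\]
which exceeds $(c/24)\epsi=c^2/(24k^2)$ once constants are tuned, using $c<1$ to absorb the denominator. The consequence $F_2(\epsi,n)=\upOmega((\log n)^2\epsi^2)$ then follows by setting $k=\lfloor\log_2(n-1)\rfloor$ and $c=k^2\epsi\in(0,1)$.

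The main obstacle is the lower bound on $L(T_k)$: showing that the coherent angular drift accumulates to a genuine $\upOmega((k\epsi)^2)$ quadratic lengthening with no hidden cancellation among the $2^k-1$ perturbations. I expect this to require an induction on $k$ that tracks several simultaneous invariants of each recursive subtree (its length, the position and direction of the edge meeting its parent Steiner point, and a cumulative rotation angle) and to form the bulk of the technical work, of the kind one naturally defers to an appendix.
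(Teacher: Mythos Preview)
Your high-level strategy---a balanced binary topology with coherent angular errors---matches the paper's, but the execution diverges in a way that leaves a real gap. The paper does \emph{not} bound $L(S(T_k))$ from above by exhibiting a degenerate comparison tree. Instead, it fixes the edge lengths to halve at each level, represents all vertices as polynomials in $z=e^{i\epsi}$ over $\bZ[\tfrac12,\omega]$, and runs Melzak's algorithm symbolically. The quasi-terminals and Steiner points turn out (after nontrivial induction) to be polynomials in $z$ as well, and one gets the \emph{exact} value $L(S(T_k))=\bigl|\sum_{j=0}^k z^j\bigr|=\sin\!\bigl((k{+}1)\epsi/2\bigr)/\sin(\epsi/2)$, while $L(T_k)=k+1$ is immediate. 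The ratio then Taylor-expands to $1+\tfrac{k^2+2k}{24}\epsi^2+O(\epsi^4)$, and the constant $\tfrac{1}{24}$ falls out automatically. The hypothesis $\epsi<1/k^2$ is used not in the length estimate but to guarantee that the Melzak construction is non-degenerate (via root bounds on the polynomials $p_{k,h},q_{k,h}$), which is what makes this a lower bound for $G_2$ as well as $F_2$.

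Your proposal, by contrast, tries to sandwich both lengths against a reference $L_0$ and asks for $\beta>\gamma$ in $L(T_k)\geq L_0(1+\beta k^2\epsi^2)$ and $L(S(T_k))\leq L_0(1+\gamma k^2\epsi^2)$. Nothing in your outline explains why such a gap exists, let alone why it equals $\tfrac{1}{24}$; both second-order coefficients come from the same geometry, and a crude comparison tree could easily give $\gamma\geq\beta$. You have also inverted the difficulty: $L(T_k)$ is just a sum of prescribed edge lengths and needs no recursion or ``coherent drift'' argument, whereas controlling $L(S(T_k))$ is the entire content. The claim that the terminals lie in an $O(k\epsi)$-neighbourhood of a line is not a feature of the paper's construction (with halving edges the terminals are genuinely two-dimensional; see their Figure for $k=3$) and is neither needed nor obviously achievable for a construction that also yields the required excess. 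Finally, your deduction of the asymptotic statement for general $n$ silently uses monotonicity of $F_2(\epsi,\cdot)$ in $n$, which the paper proves separately and you should cite.
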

The proof is in Section~\ref{section:construction}.
In Section~\ref{section:large}, we show some bounds for larger $\epsi$.

In the above definition of $F_d$, we consider the worst-case relative error between a full $\epsi$-approximate Steiner tree $T$ on $n$ terminals and the shortest tree $S(T)$ with the same topology as $T$, even though $S(T)$ may have a degenerate topology.
Instead, we could restrict ourselves to trees $T$ for which $S(T)$ is non-degenerate.
Note that for any $T\in\CA_\epsi^d(n)$, $S(T)$ is non-degenerate iff $S(T)$ is a locally minimum Steiner tree.
We therefore introduce the following variants of the previous two quantities:
\[ G_d(\epsi,n) = \sup\setbuilder{\frac{L(T)-L(S(T))}{(L(S(T))}}{T\in\CA_\epsi^d(n), S(T)\in\CA_0^d(n)}\]
and
\[ \overline{G}_d(\epsi,n) = \sup\setbuilder{\frac{L(T)-L(S(T))}{(L(S(T))}}{T\in\overline{\CA}_\epsi^d(n), S(T)\in\overline{\CA}_0^d(n)}\]
Clearly, $G_d(\epsi,n)\leq F_d(\epsi,n)$ and $\overline{G}_d(\epsi,n)=\overline{F}_d(\epsi,n)$.
The construction that we make to prove the lower bounds of Theorem~\ref{theorem:lowerbound} in fact gives a lower bound for $G_2(\epsi,n)$ for certain values of $n$, as in the next theorem.
\begin{theorem}\label{theorem:lowerbound'}
For any $k\geq 1$, if $\epsi=c/k^2$ with $0<c<1$, then $G_2(\epsi,2^k+1) > \frac{c}{24}\epsi$.
\end{theorem}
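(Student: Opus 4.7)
The plan is to invoke verbatim the construction that proves Theorem~\ref{theorem:lowerbound}. That construction produces, for each $k\geq 1$ and $\epsi = c/k^2$ with $0<c<1$, a full $\epsi$-approximate Steiner tree $T$ on $2^k+1$ terminals in $\bR^2$ satisfying $(L(T)-L(S(T)))/L(S(T)) > c\epsi/24$. What is new here is the extra membership condition appearing in the definition of $G_2$, namely $S(T)\in\CA_0^2(2^k+1)$: we must verify that the shortest tree on the given terminals with the same topology as $T$ is itself a non-degenerate full locally minimum Steiner tree, with every edge of positive length. Once this is established, the same estimate gives the desired lower bound on $G_2(\epsi,2^k+1)$.

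In the plane this is a feasibility check via the Melzak construction. Starting from the full topology of $T$, one repeatedly replaces a pair of leaves sharing a common parent Steiner point by the apex of an equilateral triangle on those two leaves, placed on the side opposite to the rest of the tree. The process succeeds and produces a non-degenerate $S(T)$ exactly when, at each stage, the corresponding Steiner point lies strictly inside the pertinent Melzak arc, so that the reconstructed edges all have positive length. The task therefore reduces to verifying this feasibility for the explicit terminal set produced by our construction.

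Since the construction is self-similar and obtained from small angular perturbations of $2\pi/3$ with total angular error of order $k\epsi = c/k$, the topology and rough geometry of $T$ are a small deformation of a perfectly balanced binary locally minimum Steiner tree on $2^k+1$ terminals, whose Melzak reduction is manifestly non-degenerate. A continuity and monotonicity argument on the Melzak recursion then shows that all equivalent points remain strictly on the correct sides of their chords at every one of the $k$ levels, provided the angular deviation stays below an explicit threshold; here $k\epsi = c/k\leq c<1$ is precisely the regime in which this is the case.

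The main obstacle is controlling Melzak feasibility uniformly through the $k$ recursion levels. The key favourable feature is that the cumulative angular deviation shrinks to zero as $k$ grows, so once the small cases are dispatched by direct inspection, the inductive step reduces to the same geometric estimates already used in the proof of Theorem~\ref{theorem:lowerbound}. No new ideas beyond those are needed, which is exactly the reason why the stronger statement for $G_2$ can be extracted from the very same construction.
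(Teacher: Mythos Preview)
Your plan is essentially correct and matches the paper: the same explicit construction is used, and the proofs of Theorems~\ref{theorem:lowerbound} and~\ref{theorem:lowerbound'} are carried out simultaneously in Section~\ref{section:construction}.

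One misconception is worth correcting. You frame the non-degeneracy of $S(T)$ as an \emph{extra} requirement needed only for $G_2$. In fact it is already indispensable for the $F_2$ bound: the unfolding lemma (Lemma~\ref{lemma:unfolding}) only gives $\length{p_0q_1}\leq L(S(T_k))$, which is the wrong direction for a lower bound on $L(T_k)/L(S(T_k))$. To obtain $L(S(T_k))=\length{p_0q_1}$ exactly, one must know that the Melzak construction succeeds, i.e., that $S(T_k)$ is non-degenerate. So non-degeneracy is established as part of proving Theorem~\ref{theorem:lowerbound}, and Theorem~\ref{theorem:lowerbound'} then follows with no additional work.

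On the verification itself: the paper does not use a soft continuity argument of the kind you sketch. It carries out the Melzak recursion explicitly in complex-number coordinates (with $z=e^{i\epsi}$), obtains closed-form expressions for all quasi-terminals $q_i$, centres $c_i$, and Steiner points $s_i$, and reduces non-degeneracy of each edge to the non-vanishing of explicit polynomials $p_{k,h}(z)$ and $q_{k,h}(z)$. It then shows directly that $\abs{p_{k,h}(z)-1}$ and $\abs{q_{k,h}(z)-1}$ are bounded by $k^2\abs{z-1}\leq k^2\epsi<1$. This explicit bound is what produces the quantitative threshold $\epsi<1/k^2$; a bare continuity argument from the $\epsi=0$ case would only yield \emph{some} $k$-dependent neighbourhood without the required form, and your appeal to ``direct inspection for small $k$ plus shrinking deviation for large $k$'' does not actually deliver the stated range $0<c<1$ uniformly.
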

Unfortunately we do not know whether $G_2(\epsi,n)$ is monotone in $n$ (see 
the next section), so we cannot state a lower bound for general $n$.

\section{\texorpdfstring{Monotonicity of $F_d$ and $G_d$}{Monotonicity of F d and G d}}\label{section:monotone}
In many of the examples constructed in this paper, the number of terminals is of a special form such as a power of $2$.
In order to make general statements for all $n$, we need to know that $F_d$ and $G_d$ are monotone in $n$.
Monotonicity in $\epsi$ and in $d$ are straightforward.
Indeed, if $0\leq \epsi_1 < \epsi_2$, then an $\epsi_1$-approximate Steiner tree is also an $\epsi_2$-approximate Steiner tree, hence $F_d(\epsi_1,n)\leq F_d(\epsi_2,n)$, $G_d(\epsi_1,n)\leq G_d(\epsi_2,n)$ and $\overline{F}_d(\epsi_1,n)\leq \overline{F}_d(\epsi_2,n)$.
It is also clear that $F_d$, $G_d$ and $\overline{F}_d$ are monotone in $d$:
\[ F_2\leq F_3\leq\dots,\quad G_2\leq G_3\leq\dots\quad\text{and}\quad \overline{F}_2\leq\overline{F}_3\leq\dots.\]

It is still relatively simple to show that $F_d$ is also monotone in $n$, as we show next.
\begin{proposition}\label{monotone}
For any $d\geq 2$, $\epsi>0$ and $n\geq 3$, $F_d(\epsi,n)\leq F_d(\epsi,n+1)$.
\end{proposition}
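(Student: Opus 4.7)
The plan is to show that for every $T\in\CA_\epsi^d(n)$ and every $\eta>0$ I can construct a tree $T'\in\CA_\epsi^d(n+1)$ whose relative error $(L(T')-L(S(T')))/L(S(T'))$ is within $\eta$ of that of $T$. Taking the supremum over $T\in\CA_\epsi^d(n)$ and then sending $\eta\to 0$ will yield $F_d(\epsi,n)\leq F_d(\epsi,n+1)$.

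The construction I have in mind is to attach a small ``Y-junction'' near one of the terminals. I would pick a terminal $t$ of $T$; since $T$ is full, $t$ is a leaf, adjacent to a unique Steiner point $s$. Fix a two-dimensional plane $\Pi\subseteq\bR^d$ containing the segment $st$. For a small parameter $\delta>0$, place a new Steiner point $s_\delta\in\Pi$ at distance $\delta$ from $t$ on the locus $\myangle ss_\delta t=2\pi/3$ (the near-$t$ portion of the relevant circular arc in $\Pi$), and then place a new terminal $t_{n+1}\in\Pi$ at distance $\delta$ from $s_\delta$ in the direction which makes the three edges at $s_\delta$ pairwise $2\pi/3$ apart. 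Replacing the edge $st$ of $T$ by the three new edges $ss_\delta$, $s_\delta t$, $s_\delta t_{n+1}$ produces a tree $T_\delta$ with $n+1$ terminals and one extra Steiner point $s_\delta$.

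To see $T_\delta\in\CA_\epsi^d(n+1)$, I observe that the three angles at $s_\delta$ are $2\pi/3$ by construction, while the only other Steiner point whose incident angles change is $s$: its former edge to $t$ is replaced by an edge to $s_\delta$, whose direction differs from that of $st$ by $O(\delta/\length{st})$. So if the angles at $s$ in $T$ lie strictly inside $[2\pi/3-\epsi,2\pi/3+\epsi]$, then for $\delta$ small enough they remain inside in $T_\delta$. The boundary case, where some angle of $T$ equals $2\pi/3\pm\epsi$ exactly, I would dispose of by first perturbing $T$ slightly to push every Steiner-point angle into the open interval $(2\pi/3-\epsi,2\pi/3+\epsi)$; since both $L(T)$ and $L(S(T))$ depend continuously on the Steiner-point positions (the latter using the uniqueness of $S(T)$ cited earlier), this preliminary perturbation changes the relative error of $T$ by at most $\eta/2$.

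Finally, as $\delta\to 0$ the points $s_\delta$ and $t_{n+1}$ both collapse to $t$, which gives $L(T_\delta)\to L(T)$ immediately, and $L(S(T_\delta))\to L(S(T))$ by continuity of the length of the unique shortest tree with fixed topology as a function of the terminal positions. So the relative error of $T_\delta$ converges to that of the (perturbed) $T$, and therefore differs from it by at most $\eta/2$ for $\delta$ sufficiently small. The main obstacle will be the boundary-angle case described above, handled by the preliminary perturbation step; once that is in place the rest of the argument is routine.
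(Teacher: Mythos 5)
Your overall strategy --- attach a small gadget near a terminal $t$, let the gadget size $\delta\to 0$, and control $L(S(T_\delta))$ by adding short edges to $S(T)$ --- is the same as the paper's. But your particular gadget creates a problem that the paper's does not, and your fix for that problem is a genuine gap. Because you insert the new Steiner point $s_\delta$ \emph{off} the segment $st$ (it must satisfy $\myangle ss_\delta t=2\pi/3$), the edge at $s$ is rotated by an angle of order $\delta/\length{st}$, so the two angles at $s$ involving that edge change by a nonzero amount for every $\delta>0$. If one of those angles equals $2\pi/3\pm\epsi$ exactly, $T_\delta$ need not be $\epsi$-approximate, and you propose to handle this by ``perturbing $T$ slightly to push every Steiner-point angle into the open interval.'' That claim is asserted, not proved, and it is not obviously true: it amounts to saying that every configuration in the closed constraint set lies in the closure of the open one, and since moving any Steiner point simultaneously changes angles at all of its neighbours, the constraints interact globally. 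Even the purely local version at $s$ needs an argument: you must find a rotation direction for the edge at $s$ that does not push either of the two affected angles out of range when one of them already sits on the boundary. As written, the boundary case is exactly the hard part of your argument and it is left unproved.

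The paper sidesteps the issue with a cleaner gadget: keep the edge $st$ exactly as it is, declare the terminal $t$ to be a new Steiner point, and attach \emph{two} new terminals $t_1,t_2$ at distance $\delta$ from $t$ so that the three angles at $t$ are each exactly $2\pi/3$. Since $t$ had no angle constraint while it was a terminal and acquires only exact $2\pi/3$ angles as a Steiner point, and since no existing edge of $T$ moves, the resulting tree is $\epsi$-approximate with no perturbation step at all. A second, more minor point: your appeal to ``continuity of the length of the unique shortest tree with fixed topology'' does not literally apply, because $T_\delta$ has a different topology from $T$; what you actually need (and what the paper uses) is only the one-sided bound $L(S(T_\delta))\leq L(S(T))+O(\delta)$, obtained by exhibiting an explicit competitor tree built from $S(T)$ by adding the short new edges.
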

\begin{proof}
Consider any $\epsi$-approximate Steiner tree $T$ with a full Steiner topology on $n$ terminals.
Let $S$ be a shortest tree with the same terminals set and with the same (possibly degenerate) topology as $T$.
We show that
\begin{equation}\label{ineq}
F_d(\epsi,n+1)\geq L(T)/L(S) -1.
\end{equation}
Let $\delta>0$ be arbitrary.
Modify $T$ to obtain an $\epsi$-approximate Steiner tree $T'$ on $n+1$ terminals as follows.
Choose any terminal $t$ of $T$.
It is joined to a Steiner point $s$ of $T$.
Let $t_1$ and $t_2$ be two points at distance $\delta$ from $t$ such that the three angles at $t$ are equal: $\myangle t_1tt_2=\myangle t_1ts=\myangle t_2ts$.
(Thus, $t_1$, $t_2$, $t$ and $s$ have to be coplanar.)
If we consider $t_1$ and $t_2$ to be two new terminals, and consider $t$ to be a Steiner point, then we obtain an $\epsi$-approximate Steiner tree $T'$ on $n+1$ terminals of length $L(T')=L(T)+2\delta$.

We modify $S$ by adding the edges $t_1t$ and $t_2t$ to obtain a tree $S'$ with the same topology as $T'$ (allowing degenerate topologies).
Then $L(S(T'))\leq L(S')=L(S)+2\delta$, and
\[ F_d(\epsi,n+1)\geq\frac{L(T')}{L(S(T'))}-1\geq\frac{L(T)+2\delta}{L(S)+2\delta}-1.\]
Since this holds for all $\delta>0$, \eqref{ineq} follows.
Since \eqref{ineq} holds for an arbitrary $\epsi$-approximate Steiner tree on $n$ terminals,
\[F_d(\epsi,n+1)\geq\sup L(T)/L(S)-1=F_d(\epsi,n). \qedhere\]
\end{proof}
The monotonicity of $G_d(\epsi,n)$ in $n$ seems to be subtler, and we have only been able to show it for~$d\geq 3$.
\begin{proposition}\label{monotoneG}
For any $d\geq 3$, $\epsi>0$ and $n\geq 3$, $G_d(\epsi,n)\leq G_d(\epsi,n+1)$.
\end{proposition}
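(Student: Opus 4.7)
The plan is to mimic the proof of Proposition~\ref{monotone}, but with a careful choice of placement that guarantees the shortest tree $S(T')$ on the enlarged terminal set remains non-degenerate, so that $S(T')$ belongs to $\CA_0^d(n+1)$. The extra freedom afforded by the dimension $d\geq 3$ is used in exactly this step.

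First I would fix $T\in\CA_\epsi^d(n)$ with $S(T)\in\CA_0^d(n)$, pick a terminal $t$ of $T$, and let $v$ denote its unique Steiner neighbour in the topology. Let $p$, $q$, $q'$ be the positions of $t$ in $T$, of $v$ in $T$, and of $v$ in $S(T)$, respectively. For small $\delta>0$, place new terminals $t_1,t_2$ at distance $\delta$ from $p$ in some plane $\Pi$ through the line $pq$, satisfying $\myangle t_1 p t_2=\myangle t_1 p q=\myangle t_2 p q=2\pi/3$, and relabel $t$ as a Steiner point. Exactly as in Proposition~\ref{monotone}, this produces $T'\in\CA_\epsi^d(n+1)$ with $L(T')=L(T)+2\delta$, and adjoining the fork $t_1 t,t_2 t$ to $S(T)$ shows $L(S(T'))\leq L(S(T))+2\delta$.

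The crucial new step is to choose $\Pi$ so that $S(T')\in\CA_0^d(n+1)$ for all sufficiently small $\delta$. As $\delta\to 0$, the unique minimum $S(T')$ converges to $S(T)$ with the new Steiner point $t^*$ collapsed onto $p$; for $\delta>0$ small, $t^*$ is close to the Fermat point of the triangle with vertices $t_1,t_2$ and the position $q''$ of $v$ in $S(T')$, which itself is close to $q'$. Hence $S(T')$ is non-degenerate iff all three angles of the triangle $t_1 t_2 q'$ are strictly less than $2\pi/3$. The angle at $q'$ is of order $\delta/|q'-p|$, and by symmetry of $t_1,t_2$ about $\Pi$ the two angles at $t_1,t_2$ have leading-order cosines $\pm(q'-p)\cdot(t_2-t_1)/(|q'-p|\,|t_2-t_1|)$. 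As $\Pi$ rotates around the line $pq$, the unit vector $(t_2-t_1)/|t_2-t_1|$ sweeps the unit sphere in the orthogonal complement of $pq$, which has dimension $d-1\geq 2$ when $d\geq 3$; I can therefore choose $\Pi$ so that $t_2-t_1$ is orthogonal to the component of $q'-p$ perpendicular to $pq$, killing this cosine. The resulting angles at $t_1,t_2$ are close to $\pi/2$ and so strictly less than $2\pi/3$ for small $\delta$, and $S(T')$ is non-degenerate.

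Once $S(T')\in\CA_0^d(n+1)$, the ratio $L(T')/L(S(T'))-1$ is a valid term in the supremum defining $G_d(\epsi,n+1)$, and combining $L(T')=L(T)+2\delta$ with $L(S(T'))\leq L(S(T))+2\delta$ and letting $\delta\to 0$ yields $G_d(\epsi,n+1)\geq L(T)/L(S(T))-1$; a final supremum over admissible $T$ completes the proof. The main obstacle is exactly the non-degeneracy argument: in the plane $\Pi$ is forced and $t_2-t_1$ is determined by $pq$, so if $q'-p$ happens to be nearly perpendicular to $pq$ the triangle $t_1 t_2 q'$ develops an angle close to $\pi$ at $t_1$, the Fermat point collapses onto a vertex, and $S(T')$ becomes degenerate; it is precisely the extra dimensional freedom in $d\geq 3$ that averts this collapse.
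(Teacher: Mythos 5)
Your proof is correct, but it takes a genuinely different route from the paper's. You reuse the construction of Proposition~\ref{monotone} --- turn the terminal $t$ into a Steiner point carrying a symmetric $2\pi/3$ tripod to two new terminals $t_1,t_2$ --- and then fight for non-degeneracy of $S(T')$ by a perturbation argument: $S(T')$ converges to the collapsed $S(T)$ as $\delta\to 0$, so the only edges at risk are the three at the new Steiner point, and these survive precisely when the limiting triangle $t_1t_2q'$ has all angles below $2\pi/3$; the rotational freedom of $\Pi$ about the line $pq$ (a $(d-2)$-sphere of choices for the direction of $t_2-t_1$, nontrivial only when $d\geq 3$) lets you make that thin triangle nearly right-angled at $t_1$ and $t_2$. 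The paper instead redesigns the construction so that no limiting argument is needed: it places $t_1,t_2$ on a segment through $t$ that is perpendicular to \emph{both} $ts$ and $ts'$ (this is where $d\geq 3$ enters there), and inserts a new Steiner point $s_2'$ on the segment $s't$ with $\myangle t_1s_2't_2=2\pi/3$; by the perpendicularity and the symmetry of $t_1,t_2$ about $t$, all three angles at $s_2'$ are then exactly $2\pi/3$, so the modified tree is a locally minimum Steiner tree and hence is exactly $S(T')$, with $L(S(T'))=L(S(T))+3\delta$ on the nose. What the paper's version buys is an explicit identification of $S(T')$ for every $\delta$, an equality rather than an inequality in the denominator, and no appeal to continuity of $T\mapsto S(T)$; what yours buys is economy (the same modification as in Proposition~\ref{monotone}) at the cost of having to justify the convergence $S(T')\to S(T)$, which does follow from the uniqueness of $S(\cdot)$ cited from \cite{GP} together with a standard compactness argument, but is an extra ingredient you should state explicitly rather than leave implicit.
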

\begin{proof}
Let $\delta>0$ be arbitrary.
Let $T$ be a full $\epsi$-approximate Steiner tree on $n$ terminals in $\bR^d$, such that $S(T)$ is non-degenerate (in particular, $S(T)$ is still full).
Choose any terminal $t$ of $T$.
It is joined to a Steiner point $s$ in $T$ and also to a Steiner point $s'$ in $S(T)$.
Choose a point $t_1$ such that $t_1t$ is perpendicular to $ts$ and to $ts'$, and $\length{tt_1}=\sqrt{3}\delta$.
Let $t_2$ be the unique point such that $t$ is the midpoint of $t_1t_2$.
Without loss of generality, $\delta<\length{ts}, \length{ts'}$.
Then there exists a unique point $s_2$ on $st$ such that $\myangle t_1s_2t_2=2\pi/3$ and a unique point $s'_2$ on $s't$ such that $\myangle t_1s'_2t_2=2\pi/3$.
Let $T'$ be the tree obtained from $T$ by removing $t$ and $st$, and adding the Steiner point $s_2$, terminals $t_1$ and $t_2$, and edges $ss_2$, $t_1s_2$ and $t_2s_2$.
Then $T'$ is an $\epsi$-approximate Steiner tree on $n+1$ terminals, and $L(T')=L(T)+3\delta$.
Also, $S(T')$ is the tree obtained from $S(T)$ by removing $t$ and $s't$, and adding the Steiner point $s'_2$, terminals $t_1$ and $t_2$, and edges $s's'_2$, $t_1s'_2$ and $t_2s'_2$.
Then $L(S(T'))=L(S(T))+3\delta$.
We conclude that \[ G_d(\epsi,n+1) \geq\frac{L(T')}{L(S(T'))}-1 = \frac{L(T)+3\delta}{L(S(T))+3\delta}-1,\]
and by letting $\delta\to 0$ and taking the $\sup$ of the right-hand side, the proof is finished.
\end{proof}

We have not been able to show that $\overline{F}_d(\epsi,n)=\overline{G}_d(\epsi,n)$ is monotone in $n$.
We are also not sure whether $G_2(\epsi,n)\leq G_2(\epsi,n+1)$ or $\overline{F}_2(\epsi,n)\leq\overline{F}_2(\epsi,n+1)$ always hold.

\section{\texorpdfstring{Results for Large $\epsi$}{Results for Large Epsilon}}\label{section:large}
This section contains upper and lower bounds for $F_d$ for values of $\epsi$ that are independent of $n$.
In Proposition~\ref{plane} we obtain the modest upper bound of $2n-4$ for $F_2(\epsi,n)$, as long as $\epsi\leq\pi/6$.
We do not know of any better upper bound in the plane for small and fixed~$\epsi$.
In Theorem~\ref{upperbound} we give an explicit upper bound for $F_d(\epsi,n)$ for all values of $\epsi<2\pi/3$.
For instance, we obtain $F_d(\epsi,n)\leq O\left(\left(2/\sqrt{3}+\epsi\right)^n\right)$ for small $\epsi$.

Theorem~\ref{polynomial} sharpens Lemma~2.2 of \cite{RWW} in the range $\epsi\in(\pi/3,2\pi/3)$ by giving a lower bound for $F_d$ for all $d\geq 2$ of the form $n^{\alpha(\epsi)}$ where $\alpha(\epsi)$ is an explicit function of~$\epsi$.
In particular, it will follow that if $\epsi>105.6\dots^\circ$, then $\alpha(\epsi)>2$, hence the lower bound grows superquadratically.
This indicates that Theorem~2.1 of \cite{RWW} can only hold if $\epsi$ is sufficiently small.
We also obtain a lower bound for $\epsi=\pi/3$ of the form $\upOmega(\log n)$.

\begin{proposition}\label{plane}
If $\epsi\leq\pi/6$ and $n\geq 3$ then $F_2(\epsi,n)\leq 2n-4$.
\end{proposition}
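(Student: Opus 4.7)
The plan is to prove the inequality $L(T) \leq (2n-3)\diam(N)$ while $L(S(T)) \geq \diam(N)$, where $N$ is the terminal set of $T$; combining these yields $L(T)/L(S(T)) - 1 \leq 2n-4$.

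The key geometric step is to verify that when $\epsi \leq \pi/6$, every Steiner point of an $\epsi$-approximate Steiner tree lies in the convex hull of its three neighbours. At a Steiner point $s$ with neighbours $a,b,c$, the three convex angles $\myangle asb$, $\myangle bsc$, $\myangle asc$ all lie in $[2\pi/3-\epsi,\,2\pi/3+\epsi]\subseteq[\pi/2,\,5\pi/6]$. If $s$ lay outside $\triangle abc$, then one of these convex angles would equal the sum of the other two (this is the standard dichotomy for three rays from a point in the plane: either they span all directions, forcing the three pairwise convex angles to sum to $2\pi$, or one ray lies in the convex cone spanned by the other two). In the latter case the largest angle would be at least $\pi/2 + \pi/2 = \pi$, contradicting the upper bound $5\pi/6$. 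Hence $s \in \triangle abc$.

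From this I would deduce, by an extremal argument, that every Steiner point in fact lies in $\conv(N)$: for any unit vector $u$, a Steiner point $s$ maximising $\ipr{u}{\cdot}$ among the Steiner points of $T$ satisfies $\ipr{u}{s} \leq \max(\ipr{u}{a},\ipr{u}{b},\ipr{u}{c})$, so by iterating along the (finite) tree one reaches a terminal, giving $\ipr{u}{s} \leq \max_{t\in N}\ipr{u}{t}$. Since this holds for every direction $u$, every Steiner point lies in $\conv(N)$. Consequently every edge of $T$ has both endpoints in $\conv(N)$, so its length is at most $\diam(\conv(N)) = \diam(N)$. A full Steiner topology on $n$ terminals has $n-2$ Steiner points and exactly $2n-3$ edges, so $L(T)\leq (2n-3)\diam(N)$.

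Finally, $S(T)$ interconnects $N$, so it contains a path between the two terminals realising $\diam(N)$; this path has Euclidean length at least $\diam(N)$, giving $L(S(T))\geq \diam(N)$. The two bounds together yield $L(T)/L(S(T))\leq 2n-3$, i.e.\ $F_2(\epsi,n)\leq 2n-4$. The only step requiring any care is the angle-sum dichotomy characterising when the Steiner point lies inside the triangle of its neighbours, but the slack in $5\pi/6 < \pi$ at $\epsi=\pi/6$ makes even this step routine.
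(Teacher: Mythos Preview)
Your proof is correct and follows essentially the same approach as the paper's: both use the fact that all angles at a Steiner point are at least $\pi/2$ to conclude that each Steiner point lies in the convex hull of its neighbours, hence in the convex hull of the terminals, so that every edge has length at most the diameter of the terminal set. You simply supply more detail for the two steps the paper leaves as ``it follows'' and ``it easily follows''.
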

\begin{proof}
Since $2\pi/3-\epsi\geq\pi/2$, it follows that each Steiner point of an $\epsi$-approximate Steiner tree $T$ is in the convex hull of its neighbours.
It easily follows that each Steiner point is in the convex hull $K$ of the terminals.
Therefore, each edge of $T$ has length at most $\diam K$.
Since $T$ has $2n-3$ edges, and any Steiner tree on the terminals has length at least $\diam K$, it follows that $L(T)/L(S(T))\leq 2n-3$, hence $F_2(\epsi,n)\leq 2n-4$.
\end{proof}
We will often use the following reverse triangle inequality.
\begin{lemma}\label{lemma:reversetriangle}
In $\triangle abc$, \[\length{ab}+\length{bc} \leq\frac{\length{ac}}{\cos(\theta/2)},\]
where $\theta$ is the exterior angle at $b$.
\end{lemma}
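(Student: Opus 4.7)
My plan is to reduce the inequality to a short trigonometric identity via the law of sines. Let $\alpha = \myangle bac$ and $\gamma = \myangle bca$. The interior angle at $b$ equals $\pi-\theta$, so $\alpha+\gamma=\theta$. The law of sines gives
\[ \frac{\length{ab}}{\sin\gamma} = \frac{\length{bc}}{\sin\alpha} = \frac{\length{ac}}{\sin(\pi-\theta)} = \frac{\length{ac}}{\sin\theta}, \]
which recasts the left-hand side as $\length{ab}+\length{bc} = \length{ac}(\sin\alpha+\sin\gamma)/\sin\theta$.

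The next step is to simplify $(\sin\alpha+\sin\gamma)/\sin\theta$ using sum-to-product on the numerator and the double-angle formula on the denominator. Writing $\sin\alpha+\sin\gamma = 2\sin\tfrac{\alpha+\gamma}{2}\cos\tfrac{\alpha-\gamma}{2}$ and $\sin\theta = 2\sin(\theta/2)\cos(\theta/2)$, the factor $\sin((\alpha+\gamma)/2)=\sin(\theta/2)$ cancels and I am left with
\[ \length{ab}+\length{bc} = \frac{\length{ac}\,\cos\tfrac{\alpha-\gamma}{2}}{\cos(\theta/2)}. \]
Since $\cos\tfrac{\alpha-\gamma}{2}\leq 1$, the claimed bound follows immediately.

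There is essentially no hard step here; the only thing to watch is that I stay within the principal range of the sine and cosine (which is automatic because $\theta\in[0,\pi]$ and $\alpha,\gamma\geq 0$ with $\alpha+\gamma\leq\pi$). As a byproduct, the computation pins down the equality case: $\length{ab}+\length{bc} = \length{ac}/\cos(\theta/2)$ iff $\alpha=\gamma$, i.e.\ iff $\triangle abc$ is isosceles with $\length{ab}=\length{bc}$. This equality case is exactly the one that will be saturated in the zigzag constructions used later when each triangle is made isosceles, so it is worth recording even though the lemma statement itself does not require it.
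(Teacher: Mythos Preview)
Your proof is correct and follows essentially the same route as the paper's: apply the sine rule, rewrite $(\sin\alpha+\sin\gamma)/\sin\theta$ via sum-to-product and the double-angle formula, cancel $\sin(\theta/2)$, and bound the remaining cosine by~$1$. Your version is in fact slightly cleaner, since the paper's displayed identity contains a typo ($\cos\tfrac{\alpha+\gamma}{2}$ where $\cos\tfrac{\alpha-\gamma}{2}$ is meant), and your explicit identification of the equality case $\alpha=\gamma$ is a useful addition.
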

\begin{proof}
Let the angular measures of the interior angles of $\triangle abc$ at $a$, $b$, $c$, be $\alpha$, $\beta$, $\gamma$, respectively.
By the sine rule,
\begin{align*}
\frac{\length{ab}+\length{bc}}{\length{ac}} &= \frac{\sin\gamma}{\sin\beta}+\frac{\sin\alpha}{\sin\beta}
= \frac{\sin\alpha+\sin\gamma}{\sin\theta} =  \frac{2\sin\left(\frac{\alpha+\gamma}{2}\right)\cos\left(\frac{\alpha+\gamma}{2}\right)}{\sin\theta}\\
&\leq \frac{2\sin\left(\frac{\alpha+\gamma}{2}\right)}{\sin\theta} = \frac{2\sin(\theta/2)}{2\sin(\theta/2)\cos(\theta/2)} = \frac{1}{\cos(\theta/2)}.
\qedhere
\end{align*}
\end{proof}
We define a \define{cherry} of a Steiner topology $\CT$ to be a subgraph of $\CT$ consisting of two terminals with a common Steiner point.
It is easy to see that any Steiner topology on at least $3$ terminals has at least two cherries.
We will later use the fact that for any terminal $t$ there exists a cherry with two terminals not equal to $t$.
(To see this, note that in the subtree of $\CT$ on the Steiner points, there are at least two leaves, unless $n=3$.)
\begin{lemma}\label{cherry2}
Let $T$ be an $\epsi$-approximate Steiner tree in $\bR^d$, where $0\leq\epsi<2\pi/3$.
\begin{enumerate}[\textup{(}i\textup{)}]
\item For any cherry with terminals $t_1$ and $t_2$ and Steiner point $s$,
\[\length{st_1}+\length{st_2}\leq\length{t_1t_2}/\sin(\pi/3-\epsi/2).\]
\item If $D$ is the diameter of the set of terminals, then for any terminal $t$ and Steiner point $s$, \[\length{ts}\leq D\cos(\epsi/2)/\sin(\pi/3-\epsi/2).\]
\end{enumerate}
\end{lemma}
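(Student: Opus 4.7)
The plan for part (i) is a direct application of Lemma~\ref{lemma:reversetriangle} to the triangle $\triangle t_1st_2$. The interior angle at $s$ is exactly the cherry angle $\myangle t_1st_2\in[2\pi/3-\epsi,2\pi/3+\epsi]$, so the exterior angle $\theta=\pi-\myangle t_1st_2$ satisfies $\theta\leq \pi/3+\epsi$, whence $\cos(\theta/2)\geq\cos(\pi/6+\epsi/2)=\sin(\pi/3-\epsi/2)$. Lemma~\ref{lemma:reversetriangle} then yields
\[\length{st_1}+\length{st_2}\leq\frac{\length{t_1t_2}}{\cos(\theta/2)}\leq\frac{\length{t_1t_2}}{\sin(\pi/3-\epsi/2)},\]
and the hypothesis $\epsi<2\pi/3$ is needed only to keep the denominator positive.

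For part (ii), the plan is to combine part (i) with the remark preceding the lemma: for any terminal $t$ there exists a cherry $(t_1,t_2)$ in $T$ with both $t_1,t_2\neq t$. Let $s_c$ denote its Steiner point. Then part (i) gives $\length{s_ct_1}+\length{s_ct_2}\leq D/\sin(\pi/3-\epsi/2)$. I would proceed by induction on $n$. In the base case $n=3$ the single Steiner point $s$ is adjacent to all three terminals, and a law-of-cosines computation in each of the triangles $\triangle st_it_j$ (using $\myangle t_ist_j\geq 2\pi/3-\epsi$) shows that $\length{st_i}$ is controlled by $D$; since $\cos(\epsi/2)/\sin(\pi/3-\epsi/2)\geq 1$, the desired estimate follows. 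For $n\geq 4$ I would perform a Melzak-style unfurling of the cherry $(t_1,t_2)$: replace $t_1,t_2$ and $s_c$ by a single pseudo-terminal $t_{12}$ positioned so that, in the exact case, $\length{s_ct_{12}}=\length{s_ct_1}+\length{s_ct_2}$ and $s_c$ lies on the segment from $t_{12}$ to the third neighbor of $s_c$. The inductive hypothesis on the unfurled tree of $n-1$ terminals then controls the remaining distances, which are translated back to the original tree using the cherry geometry.

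The main obstacle is twofold. First, the bare cherry estimate yields only $D/\sin(\pi/3-\epsi/2)$, so the extra factor of $\cos(\epsi/2)$ has to come from a sharper analysis of the cherry triangle---presumably a sine-rule argument that uses not merely the exterior angle at $s_c$ but also the bound $\pi/6+\epsi/2$ on each base angle, equivalent to projecting a diameter-length segment onto the direction of $ts$ at the optimal angle. Second, in the approximate setting the pseudo-terminal $t_{12}$ is not placed exactly, so the effective diameter of the reduced terminal set may differ from $D$ by an $O(\epsi)$-amount; one must check that this discrepancy is absorbed by the $\cos(\epsi/2)$-factor so that the induction closes.
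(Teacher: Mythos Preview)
Your proof of part~(i) is correct and coincides with the paper's.

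For part~(ii) your plan has a real gap, and it is precisely the second obstacle you flag. When you replace the cherry $(t_1,t_2,s_c)$ by a single pseudo-terminal, the diameter of the reduced terminal set is no longer bounded by $D$: whether you place the new point at $s_c$ or at a Melzak third vertex $t_{12}$, its distance to the remaining terminals is only controlled by $D\cos(\epsi/2)/\sin(\pi/3-\epsi/2)$ (respectively by roughly $2D$), which exceeds $D$. Iterating therefore gives a bound of order $D\cdot A^{n}$ with $A>1$ --- and that is exactly how Theorem~\ref{upperbound} is obtained, not Lemma~\ref{cherry2}. The factor $\cos(\epsi/2)\leq 1$ cannot absorb a multiplicative blow-up at each step; it \emph{tightens} the target rather than loosening it. No sine-rule refinement of the single cherry triangle repairs this, because the loss is incurred in the diameter of the reduced problem, not in the cherry estimate itself. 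Your base case is also not established: for large $\epsi$ the Steiner point of three terminals need not lie in their convex hull, so $\length{st}\leq D$ is not automatic.

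The paper's argument is entirely different and uses no induction. Given $t$ and a cherry with terminals $t_1,t_2\neq t$ and Steiner point $s$, it works in a plane through $t,t_1,t_2$. The angle condition $\myangle t_1st_2\geq 2\pi/3-\epsi$ confines $s$ to a ball $B$ whose boundary is the circular locus where $t_1t_2$ subtends angle $2\pi/3-\epsi$; the diameter condition confines $t$ to the lune cut out by the two balls of radius $D$ about $t_1$ and $t_2$. One writes $\length{ts}\leq\length{tc}+\length{cs}$ with $c$ the centre of $B$, uses Euclid~III.7 to pass to the extremal segment $pq$ (with $p$ the apex of the lune and $q$ the far point of $B$ on the perpendicular bisector of $t_1t_2$), and evaluates $\length{pq}$ by the sine rule in $\triangle pt_1q$. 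The factor $\cos(\epsi/2)$ emerges as $\sin\myangle pt_1q\leq\sin(\pi/2+\epsi/2)$, coming from $\myangle pt_1o\geq\pi/3$ (since $\length{t_1t_2}\leq D=\length{t_1p}$) together with $\myangle qt_1o=\pi/6+\epsi/2$; it has nothing to do with sharpening part~(i). Note also that the proof, as written, treats $s$ as the cherry's own Steiner point, which is the only case needed in Theorem~\ref{upperbound}.
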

\begin{proof}
For the first statement, we use Lemma~\ref{lemma:reversetriangle}:
\[ \frac{\length{st_1}+\length{st_2}}{\length{t_1t_2}} \leq \frac{1}{\cos \frac12(\pi-\myangle t_1st_2)} = \frac{1}{\sin \frac12\myangle t_1st_2} \leq \frac{1}{\sin (\pi/3-\epsi/2)}.\]

For the second statement, consider the plane $\Uppi$ through $t$ and the terminals $t_1$, $t_2$ of a cherry (if these points are collinear, choose any plane through them).
Let $o$ be the midpoint of $t_1t_2$.
Let $C_i$ be the circle with centre $t_i$ and radius $D$.
Denote the half plane bounded by $t_1t_2$ and containing $t$ by $H$.
Let $p$ be the point where $C_1$ and $C_2$ intersect in $H$.
Without loss of generality, $t$ is inside the angle $\myangle pot_2$.

First, suppose that $\epsi\leq\pi/6$ (Fig.~\ref{fig3}(a)).
\begin{figure}
\centering
\begin{tikzpicture}[line cap=round,line join=round,>=triangle 45,x=1.0cm,y=1.0cm,scale=0.47]
\begin{scope}
\clip(-4.258373987600028,-3) rectangle (11.7,8.301842761018857);
\draw [domain=-4.258373987600028:13.51971699853897] plot(\x,{(--3.5015601753522585--0.009763683208445784*\x)/5.668636443404443});
\draw (5.723053276899828,-1.9846343289577646) -- (5.723053276899828,8.055023651793285);
\draw(5.720664273871347,1.3870164917822754) circle (2.9343064951895435cm);
\draw(2.8876541376845526,0.6226813715801264) circle (6.969866874304224cm);
\draw (8.556290581088996,0.6324450547885722) circle (6.969866874304224cm);
\draw (5.711004885180863,6.995101226088723)-- (2.8876541376845526,0.6226813715801264);
\draw (2.8876541376845526,0.6226813715801264)-- (5.725718328200734,-1.5472856508482988);
\draw (5.720664273871347,1.3870164917822754)-- (8.376507611880431,4.918205994292837);
\fill [shift={(2.8876541376845526,0.6226813715801264)},fill opacity=0.05]  (0,0) --  plot[domain=0.2635217902941258:3.4051144438839187,variable=\t]({1.*6.969866874304225*cos(\t r)+0.*6.969866874304225*sin(\t r)},{0.*6.969866874304225*cos(\t r)+1.*6.969866874304225*sin(\t r)}) -- cycle ;
\draw (-3.84,-1.19) -- (9.62,2.44); 
\draw [fill=black] (2.8876541376845526,0.6226813715801264) circle (2pt);
\draw[color=black] (2.4,1.1513487257037989) node {$t_1$};
\draw [fill=black] (8.556290581088996,0.6324450547885722) circle (2pt);
\draw[color=black] (9.1,1.0836656381931178) node {$t_2$};
\draw[color=black] (11,2.1) node {$H$};
\draw [fill=black] (5.721972359386775,0.6275632131843493) circle (2pt);
\draw [color=black] (6.1,0.2) node {$o$};
\draw [fill=black] (5.720664273871347,1.3870164917822754) circle (2pt);
\draw[color=black] (5.3,1.7379354841297028) node {$c$};
\draw[color=black] (3.3221318135962963,3.7684281094501397) node {$C$};
\draw[color=black] (-1.7,6.8) node {$C_1$};
\draw[color=black] (10,6.8) node {$C_2$};
\draw [fill=black] (5.711004885180863,6.995101226088723) circle (2pt);
\draw[color=black] (6.1,7.5) node {$p$};
\draw[color=black] (4.3,5.07696780132331) node {$D$};
\draw [fill=black] (5.725718328200734,-1.5472856508482988) circle (2pt);
\draw[color=black] (6.1,-1.95) node {$q$};
\draw [fill=black] (7.902020735152411,4.287331780365361) circle (2pt);
\draw[color=black] (7.6,4.558064130408087) node {$t$};
\draw [fill=black] (8.376507611880431,4.918205994292837) circle (2pt);
\draw[color=black] (8.9,5.2348950055148995) node {$t'$};
\end{scope}
\draw [fill=black] (2.8876541376845526,-5.2) node {(a)};
\end{tikzpicture}
\hfill
\begin{tikzpicture}[line cap=round,line join=round,>=triangle 45,x=1.0cm,y=1.0cm,scale=0.47]
\begin{scope}
\clip(-4.325492366311542,-3.960032687409054) rectangle (10.486647093449143,8.301842761018857);
\draw [domain=-4.325492366311542:10.486647093449143] plot(\x,{(--3.5015601753522585--0.009763683208445784*\x)/5.668636443404443});
\draw (5.723053276899828,-3.960032687409054) -- (5.723053276899828,8.301842761018857);
\draw(5.720664273871347,1.3870164917822754) circle (2.9343064951895435cm);
\draw(2.8876541376845526,0.6226813715801264) circle (6.969866874304224cm);
\draw(8.556290581088996,0.6324450547885722) circle (6.969866874304224cm);
\draw (5.711004885180863,6.995101226088723)-- (2.8876541376845526,0.6226813715801264);
\draw(5.723280444902204,-0.13189006541357673) circle (2.9343064951895435cm);
\draw (5.721972359386775,0.6275632131843493)-- (8.407351606882106,4.878499047897742);
\fill [shift={(2.8876541376845526,0.6226813715801264)},fill opacity=0.05]  (0,0) --  plot[domain=0.001722402494976416:3.1433150560847696,variable=\t]({1.*6.969866874304224*cos(\t r)+0.*6.969866874304224*sin(\t r)},{0.*6.969866874304224*cos(\t r)+1.*6.969866874304224*sin(\t r)}) -- cycle ;
\draw (5.720664273871347,1.3870164917822754)-- (2.8876541376845526,0.6226813715801264);
\draw (2.8876541376845526,0.6226813715801264)-- (5.723280444902204,-0.1318900654135768);
\draw (2.8876541376845526,0.6226813715801264)-- (5.72833449923159,-3.0661922080441513);
\draw [fill=black] (2.8876541376845526,0.6226813715801264) circle (2pt);
\draw[color=black] (2.4494514213703016,1.1) node {$t_1$};
\draw [fill=black] (8.556290581088996,0.6324450547885722) circle (2pt);
\draw[color=black] (9.1,1.1) node {$t_2$};
\draw[color=black] (-2,2.1) node {$H$};
\draw [fill=black] (5.721972359386775,0.6275632131843493) circle (2pt);
\draw[color=black] (6,0.3) node {$o$};
\draw [fill=black] (5.720664273871347,1.3870164917822754) circle (2pt);
\draw[color=black] (5.3,1.8) node {$c_1$};
\draw[color=black] (3.0,3.7) node {$B_1$};
\draw[color=black] (-1.7,6.8) node {$C_1$};
\draw[color=black] (9.5,6.9) node {$C_2$};
\draw [fill=black] (5.711004885180863,6.995101226088723) circle (2pt);
\draw[color=black] (6.1,7.5) node {$p$};
\draw[color=black] (4.458750339390011,5.262134141450594) node {$D$};
\draw [fill=black] (5.723280444902204,-0.1318900654135768) circle (2pt);
\draw[color=black] (5.3,-0.5) node {$c_2$};
\draw[color=black] (3.0,-2.2) node {$B_2$};
\draw [fill=black] (7.884862597038998,4.05140274213103) circle (2pt);
\draw[color=black] (7.653020414190577,4.437806380211743) node {$t$};
\draw [fill=black] (5.71561021954196,4.321318634412849) circle (2pt);
\draw[color=black] (6.2,4.7) node {$q_1$};
\draw [fill=black] (5.72833449923159,-3.0661922080441513) circle (2pt);
\draw[color=black] (6.2,-3.5) node {$q_2$};
\draw [fill=black] (8.407351606882106,4.878499047897742) circle (2pt);
\draw[color=black] (8.9,5.1848534138344515) node {$t'$};
\end{scope}
\draw [fill=black] (2.8876541376845526,-5.2) node {(b)};
\end{tikzpicture}
\caption{Proof of Lemma~\ref{cherry2}}\label{fig3}
\end{figure}
Let $c$ be the point on the line $op$ in the half plane $H$ such that $\myangle ct_1t_2=\pi/6-\epsi$.
Let $t'$ be the point where the ray from $c$ through $t$ intersects $C_1$.
Then $\length{ct}\leq\length{ct'}\leq\length{cp}$ (Euclid III.7 \cite{Euclid}).
Let $C$ be the circle with centre $c$ that passes through $t_1$ and $t_2$, and let it intersect the line $op$ in the half plane opposite $H$ in $q$.
Then for any point $x\in C\cap H$, $\myangle t_1xt_2=\pi/3+\epsi$ and for any $x\in C\setminus H$, $\myangle t_1xt_2=2\pi/3-\epsi$.
Since $\pi/3+\epsi<2\pi/3-\epsi\leq \myangle t_1st_2$, $s$ is in the ball $B$ with centre $c$ that passes through $t_1$ and $t_2$.
In particular, $\length{cs}\leq\length{cq}$.
We conclude that
\begin{equation}\label{eq:dist}
\length{ts}\leq\length{tc}+\length{cs}\leq\length{pc}+\length{cq}=\length{pq}=D\frac{\sin\myangle pt_1q}{\sin\myangle t_1qp}.
\end{equation}
We bound $\myangle pt_1q$ from below as follows.
Since $\length{t_1t_2}\leq D=\length{t_1p}$, $\myangle pt_1t_2\geq\pi/3$.
Also, $\myangle qt_1t_2=\pi/6+\epsi/2$.
Therefore, $\myangle pt_1q\geq(\pi+\epsi)/2$.
We substitute this estimate, together with $\myangle t_1qp=\pi/3-\epsi/2$ into \eqref{eq:dist}, to obtain \[\length{ts} \leq D\frac{\sin(\pi/2+\epsi/2)}{\sin(\pi/3-\epsi/2)} = D\frac{\cos(\epsi/2)}{\sin(\pi/3-\epsi/2)}.\]

The case where $\epsi>\pi/6$ is similar (Fig.~\ref{fig3}(b)).
Let $c_1$ and $c_2$ be points on the line $op$ such that $\myangle c_it_1o=\epsi-\pi/6$, $i=1,2$.
Similar to the previous case, $\length{ot}\leq\length{op}$.

Let $B_i$ be the ball with centre $c_i$ and radius $\length{c_1t_1}=\length{c_2t_1}$, $i=1,2$.
Let $q_1$ be the point where the line $oc_1$ intersects $B_1$ in the half plane $H$, and $q_2$ the point where $oc_1$ intersects $B_2$ in the half plane opposite $H$. 
Since $B_1\cup B_2$ is the set of all points $x$ such that $\myangle t_1xt_2\geq2\pi/3-\epsi$, $s\in B_1\cup B_2$.
If $s\in B_1$, then Euclid III.7 gives that $\length{os}\leq\length{oq_1}=\length{oq_2}$.
It follows that $\length{st}=\length{so}+\length{ot}\leq\length{q_2o}+\length{op}=\length{pq_2}$.
Similar to the previous case, we obtain
\[ \length{pq_2} = D\frac{\sin\myangle pt_1q_2}{\sin\myangle t_1q_2p} \leq D\frac{\sin(\pi/2+\epsi/2)}{\sin(\pi/3-\epsi/2)}= D\frac{\cos(\epsi/2)}{\sin(\pi/3-\epsi/2)}. \qedhere\]
\end{proof}
\begin{theorem}\label{upperbound}
For any $\epsi\in(0,2\pi/3)$ and $d\geq 2$, \[F_d(\epsi,n)=O\left(\left(\cos(\epsi/2)/\sin(\pi/3-\epsi/2)\right)^n\right).\]
\end{theorem}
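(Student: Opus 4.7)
The plan is to prove by induction on $n$ that every full $\epsi$-approximate Steiner tree $T$ on $n$ terminals in $\bR^d$ satisfies $L(T)\leq f(n)\,D$, where $D$ is the diameter of its terminal set and $f(n)=O(c_\epsi^n)$ with $c_\epsi:=\cos(\epsi/2)/\sin(\pi/3-\epsi/2)$; we also abbreviate $r_\epsi:=1/\sin(\pi/3-\epsi/2)$. Once this is established the theorem follows at once, since any tree on the terminals (in particular $S(T)$) must contain a path between the two farthest terminals, so $L(S(T))\geq D$, and hence $F_d(\epsi,n)\leq f(n)-1=O(c_\epsi^n)$.

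For the base case $n=3$, the tree is a single Steiner point $s$ adjacent to three terminals $t_1,t_2,t_3$. Applying Lemma~\ref{cherry2}(i) to each of the three cherries and summing gives $2L(T)\leq 3r_\epsi D$, so $f(3):=\tfrac32 r_\epsi$ works. For the inductive step ($n\geq 4$), I pick any cherry $\{t_1,t_2,s\}$ of $T$ and form a new tree $T'$ on $n-1$ terminals by deleting the edges $st_1$ and $st_2$ together with the terminals $t_1,t_2$, and redesignating $s$ as a terminal. The only incidences that change are at $s$, which is no longer required to be $\epsi$-approximate, so $T'$ is still a full $\epsi$-approximate Steiner tree. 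By Lemma~\ref{cherry2}(ii) applied to the cherry $\{t_1,t_2,s\}$ and any remaining old terminal $t$, we have $\length{st}\leq c_\epsi D$; pairwise distances among the surviving old terminals are at most $D$; and since $c_\epsi>1$ throughout $(0,2\pi/3)$, the new terminal set has diameter $D'\leq c_\epsi D$. Combining the inductive bound $L(T')\leq f(n-1)D'$ with Lemma~\ref{cherry2}(i) yields
\[L(T)=L(T')+\length{st_1}+\length{st_2}\leq c_\epsi f(n-1)\,D+r_\epsi D,\]
i.e.\ the recurrence $f(n)\leq c_\epsi f(n-1)+r_\epsi$. Since $c_\epsi>1$, the standard analysis of this linear recurrence gives $f(n)=O(c_\epsi^n)$, completing the induction.

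The one step that needs care is the cherry contraction: one has to check that demoting the cherry's Steiner point $s$ to a terminal really produces a full $\epsi$-approximate Steiner tree on $n-1$ terminals (immediate, since no other Steiner-point incidences are changed) and that the diameter estimate $D'\leq c_\epsi D$ holds uniformly in the choice of cherry (which follows from Lemma~\ref{cherry2}(ii) applied within that cherry, combined with $c_\epsi>1$). Everything else is routine bookkeeping of the two geometric estimates already established in Lemma~\ref{cherry2}.
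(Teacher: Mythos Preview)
Your proof is correct and follows essentially the same route as the paper's: induction on $n$ via cherry contraction, using Lemma~\ref{cherry2}(ii) to bound the new diameter by $c_\epsi D$ and Lemma~\ref{cherry2}(i) to bound the two removed edge lengths by $r_\epsi D$, arriving at the same linear recurrence $f(n)\leq c_\epsi f(n-1)+r_\epsi$ and the same lower bound $L(S(T))\geq D$. The only cosmetic difference is that the paper starts the induction at $n=2$ (a single edge), whereas you start at $n=3$; the paper also solves the recurrence explicitly rather than invoking the standard estimate, but the content is identical.
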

\begin{proof}
Let $A=\cos(\epsi/2)/\sin(\pi/3-\epsi/2)$ and $B=1/\sin(\pi/3-\epsi/2)$.
We show by induction on $n\geq 2$ that
\begin{equation}\label{ind}
L(T)\leq \left(A^{n-2}+\frac{(A^{n-2}-1)B}{A-1}\right)D.
\end{equation}
If $n=2$, $L(T)=D$, which equals the right-hand side.
Next let $n>2$ and assume that~\eqref{ind} holds for $\epsi$-approximate Steiner trees on $n-1$ terminals.
Consider a cherry of $T$ with Steiner point $s$ and terminals $t_1$ and $t_2$.
By Lemma~\ref{cherry2}, the distance between $s$ and any terminal of $T$ is at most $AD$, and $\length{st_1}+\length{st_2}\leq B\length{t_1t_2}\leq BD$.
Remove $t_1$ and $t_2$ and the edges $st_1$ and $st_2$ from $T$ and change $s$ into a terminal to obtain an $\epsi$-approximate Steiner tree $T'$ on $n-1$ terminals.
The diameter of this set of terminals is $D'\leq AD$.
By the induction hypothesis, $L(T')\leq \left(A^{n-3}+\frac{(A^{n-3}-1)B}{A-1}\right)D'$.
Therefore,
\begin{align*}
L(T) &= L(T') +\length{st_1}+\length{st_2} \\
     &\leq \left(A^{n-3}+\frac{(A^{n-3}-1)B}{A-1}\right)AD + BD \\
     &= \left(A^{n-2}+\frac{(A^{n-2}-1)B}{A-1}\right)D.
\end{align*}
Finally, the length of a Steiner minimal tree joining the terminals of $T$ is at least $D$, and it follows that
\[ \frac{L(T)}{L(S(T))}-1\leq A^{n-2}+\frac{(A^{n-2}-1)B}{A-1}-1 = O(A^n).\qedhere\]
\end{proof}

The following is a sharper version of Lemma~2.2 in \cite{RWW}.
The proof is along the lines of the proof of Lemma~2.2 in \cite{RWW}, but is done in the plane.
\begin{theorem}\label{polynomial}
For each $\epsi\in(\pi/3,2\pi/3)$, $F_2(\epsi,n) =\upOmega\left(n^{\log_2 C_2(\epsi)}\right)$ 
where $C_2(\epsi)=\bigl(2\sin(\frac{\pi}{3}-\frac{\epsi}{2})\bigr)^{-1}$.
Also, $F_2(\pi/3, n)=\upOmega(\log n)$.
\end{theorem}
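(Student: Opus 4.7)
The plan is to construct, for each $k\geq 1$, a full $\epsi$-approximate Steiner tree $T_k$ on $n_k=2^k$ terminals in the plane satisfying $L(T_k)/L(S(T_k))\geq c(\epsi)\cdot C_2(\epsi)^k$ for a positive constant $c(\epsi)$. Since $C_2(\epsi)^k=n_k^{\log_2 C_2(\epsi)}$, this would give $F_2(\epsi,n)=\upOmega(n^{\log_2 C_2(\epsi)})$ on the subsequence $n=2^k$, and monotonicity in $n$ (Proposition~\ref{monotone}) then extends the estimate to all $n$. At the endpoint $\epsi=\pi/3$ one has $C_2(\pi/3)=1$, so the multiplicative argument collapses; a separate additive accounting of the ``cherry overhead'' at each level would yield the $\upOmega(\log n)$ bound there.

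The construction follows Rubinstein, Weng and Wormald's \cite[Lemma~2.2]{RWW} approach, adapted from $\bR^d$ (with $d\geq 3$) to $\bR^2$. It is recursive, each $T_k$ carrying a distinguished ``root edge'' emanating from one of its Steiner points. The base $T_1$ is an isoceles cherry with apex angle $2\pi/3-\epsi$ (the smallest allowed) at its Steiner point $s_1$, together with a root edge at $s_1$ along the bisector of the opposite angle. To form $T_k$ from $T_{k-1}$, I would take two congruent copies of $T_{k-1}$, position them symmetrically in the plane, and identify the free endpoints of their two root edges at a new Steiner point $s_k$; the two former root edges become the branch edges of a fresh cherry at $s_k$ with apex angle $2\pi/3-\epsi$, and the root edge of $T_k$ emerges from $s_k$ along the bisector of the opposite angle. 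By construction every angle at every Steiner point lies in $[2\pi/3-\epsi,\pi]\subseteq[2\pi/3-\epsi,2\pi/3+\epsi]$, so $T_k$ is a valid $\epsi$-approximate Steiner tree; attaching a root terminal at the top level turns $T_k$ into a full Steiner topology on $2^k+1$ terminals.

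The length bound comes from Lemma~\ref{cherry2}(i), which is tight for an isoceles cherry: if the two subtree-root endpoints in the merging step sit at distance $d$ apart, the two new branch edges together contribute exactly $d/\sin(\pi/3-\epsi/2)=2C_2(\epsi)\,d$ to $L(T_k)$. Iterating this observation and showing correspondingly that $L(S(T_k))\leq 2\,L(S(T_{k-1}))+O(1)$ would yield the ratio recursion $L(T_k)/L(S(T_k))\geq C_2(\epsi)\cdot L(T_{k-1})/L(S(T_{k-1}))$, and hence the desired $C_2(\epsi)^k$ growth. The hard step is the inequality for $L(S(T_k))$: one must exhibit a specific tree with the topology of $T_k$ whose length is controlled by $L(S(T_{k-1}))$ plus a bounded contribution from the merging step, for instance by repositioning each Steiner point close to the midpoint of the two subtree roots it connects. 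In higher dimensions the freedom to place subtrees in independent directions makes this essentially automatic; in the plane the nested cherries can overlap and the geometry of the shortest tree must be checked directly, and this is the main obstacle. The endpoint $\epsi=\pi/3$ is then treated by the same recursion, tracking the additive rather than multiplicative contribution at each level, since the multiplicative factor of $C_2=1$ is trivial.
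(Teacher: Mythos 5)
Your overall strategy -- an explicit hierarchical binary construction with every cherry forced to the extreme angle $2\pi/3-\epsi$, compared against a degenerate competitor with the same topology, then extended from $n=2^k$ to all $n$ via Proposition~\ref{monotone} -- is the right one, and is indeed the Rubinstein--Weng--Wormald doubling idea that the paper also adapts. But the decisive quantitative step is missing, and you have flagged it yourself as ``the main obstacle'': you never exhibit a short tree with the topology of $T_k$, i.e.\ you never prove an upper bound on $L(S(T_k))$. Moreover, the recursion you propose does not close even schematically. From $L(T_k)=2L(T_{k-1})+\ell_k$ and $L(S(T_k))\leq 2L(S(T_{k-1}))+O(1)$ one cannot deduce $L(T_k)/L(S(T_k))\geq C_2(\epsi)\cdot L(T_{k-1})/L(S(T_{k-1}))$: that would additionally require the new root edge to satisfy $\ell_k\gtrsim 2(C_2(\epsi)-1)L(T_{k-1})$, so the root edges must grow geometrically relative to the whole subtree below them, and nothing in your cherry identity ($2\ell_{k-1}=2C_2(\epsi)\,d_k$, which only relates the two branch edges to the gap $d_k$ between the sub-roots) delivers this. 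If instead the construction shrinks downward (so that the terminals cluster), then $L(S(T_{k-1}))$ must itself shrink geometrically and the additive $O(1)$ in your bound destroys the recursion entirely.

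The paper sidesteps the recursion on $L(S(\cdot))$ altogether. It places the Steiner points on concentric circles $C_0,\dots,C_k$ of radii $1,r,\dots,r^k$ with $r=\sin(\tfrac{\pi}{3}-\tfrac{\epsi}{2})<\tfrac12$, each point on $C_{i-1}$ joined to $C_i$ along its two tangent lines (which automatically produces the cherry angle $2\pi/3-\epsi$), so that all $2^k$ terminals of each half lie on the innermost circle of radius $r^k$ about the common centre $o$. The competitor is then completely explicit: collapse every Steiner point to $o$ (and $o'$ for the doubled copy), giving $L(S(T_k))\leq 2(2r)^k+\delta=2C_2(\epsi)^{-k}+\delta$, while $L(T_k)\geq 4\cos(\tfrac{\pi}{3}-\tfrac{\epsi}{2})$ is bounded below by a constant since the edge lengths form a convergent geometric series. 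The ratio $\upOmega(C_2(\epsi)^k)$ falls out in one line, and $\epsi=\pi/3$ is exactly the boundary case $2r=1$, where $L(T_k)=\Theta(k)$ against $L(S(T_k))=O(1)$, giving $\upOmega(\log n)$. To repair your write-up you would either have to prove that your glued construction admits a competitor of the stated length (which requires pinning down where the terminals actually sit, something your description leaves open), or switch to an arrangement, like the concentric one, in which the degenerate competitor is visible by inspection.
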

By making $\epsi$ large enough, the lower bound in Theorem~\ref{polynomial} grows faster than any polynomial.
In particular, if $\epsi>105.6\dots^\circ$, then the lower bound is superquadratic (compare with Theorem~2.1 in \cite{RWW}).
Theorem~\ref{polynomial} follows from the following lemma (combined with Proposition~\ref{monotone}).
\begin{lemma}
Let $k\geq 1$ and $\pi/3<\epsi<2\pi/3$.
Then $F_2(\epsi,2^{k+1}) > \sqrt{3}\frac{C^k-1}{C-1}-1$, 
where $C=\bigl(2\sin(\frac{\pi}{3}-\frac{\epsi}{2})\bigr)^{-1}$.
Also, for $k\geq 1$, $F_2(\pi/3, 2^{k+1})\geq \sqrt{3}k-1$.
\end{lemma}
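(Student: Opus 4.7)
The plan is to construct, for each $k\geq 1$, an explicit $\epsi$-approximate Steiner tree $T_k$ on $2^{k+1}$ terminals in the plane and compare its length to that of the shortest tree $S(T_k)$ on the same terminals with the same topology. The second statement of the lemma (the case $\epsi=\pi/3$) will follow from the first by passing to the limit $C\to 1$.

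The key building block is what I will call an \emph{extremal cherry}: an isoceles triangle with vertices a Steiner point $s$ and two terminals $t_1,t_2$ satisfying $\myangle t_1 s t_2=2\pi/3-\epsi$. By the law of sines (equivalently, the equality case of Lemma~\ref{cherry2}(i)) we obtain $\length{st_1}=\length{st_2}=C\length{t_1t_2}$. When the third edge at $s$ is directed along the perpendicular bisector of $t_1t_2$ opposite the opening of the cherry, the two remaining angles at $s$ are each exactly $2\pi/3+\epsi/2$, which lies in $[2\pi/3-\epsi,2\pi/3+\epsi]$ for every $\epsi>0$. Hence $s$ is a valid $\epsi$-approximate Steiner point, and the cherry can be grafted onto any existing Steiner structure at $s$ without violating the angle constraints.

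For the base case $k=1$, I take $T_1$ to consist of two such extremal cherries whose Steiner points are joined by a short edge and whose openings face inward toward that edge. A direct computation (tracking the thin rectangle of terminals produced by this configuration) yields $L(T_1)/L(S(T_1))>\sqrt{3}$. For the inductive step, I construct $T_{k+1}$ from $T_k$ by attaching, at each of its $2^{k+1}$ terminals, a new inward-facing extremal cherry of scale $C^{-k}$, thereby promoting each terminal to a degree-$3$ Steiner point and adding two new terminals. The angle computation above guarantees that the $\epsi$-approximate condition continues to hold at every Steiner point of $T_{k+1}$, old and new. The number of terminals doubles to $2^{k+2}$, and the scaling property of extremal cherries contributes length $\sqrt{3}C^{k-1}$ at level $k$, producing the geometric sum $\sqrt{3}(1+C+\cdots+C^{k-1})=\sqrt{3}(C^k-1)/(C-1)$ that appears in the claimed bound.

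The main obstacle will be controlling $L(S(T_k))$ from above. Because cherries face inward at every level, the terminals of $T_k$ cluster tightly along the original unit segment, and the task is to exhibit an explicit (possibly degenerate) tree on those terminals with the same topology whose length matches the required denominator; this is where the strict inequality in the lemma comes from. The limit case $\epsi=\pi/3$ yields $C=1$, so $(C^k-1)/(C-1)$ reduces to the summation $1+1+\cdots+1=k$, and the construction specializes to give $F_2(\pi/3,2^{k+1})\geq \sqrt{3}k-1$.
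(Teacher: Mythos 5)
Your high-level strategy (a recursive binary construction built from ``extremal cherries'' with apex angle $2\pi/3-\epsi$, compared against a collapsed degenerate tree on the same topology) is the same as the paper's, but the specific geometry you choose breaks the one feature that makes the argument work, and the step you defer as ``the main obstacle'' is in fact the entire content of the lemma. In the paper's construction the children of each Steiner point are the two tangent points from that point to the next of a family of concentric circles $C_0,C_1,\dots,C_k$ of radii $r^i$ with $r=\sin(\pi/3-\epsi/2)=1/(2C)$; the parent edge is then \emph{perpendicular} to the axis of the cherry (tangent to the previous circle), so the three angles at a Steiner point are $2\pi/3-\epsi$, $5\pi/6-\epsi/2$ and $\pi/6+\epsi/2$ --- which is exactly why the hypothesis $\epsi\geq\pi/3$ is needed. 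The payoff of this asymmetric choice is that all $2^k$ terminals of each half land \emph{exactly} on the single circle $C_k$ of radius $r^k$, so collapsing every Steiner point to the common centre yields a comparison tree of length $2\cdot 2^k r^k+\delta=2C^{-k}+\delta$, while $L(T_k)$ stays bounded below by a constant; the ratio then grows like $C^k$. Your symmetric cherry (parent edge along the perpendicular bisector, both remaining angles $2\pi/3+\epsi/2$) is indeed $\epsi$-approximate for every $\epsi>0$, but it produces a spreading fan with turns $\pm(\pi/3-\epsi/2)$ at each level, and you give no reason why its $2^{k+1}$ terminals should all lie within distance $O\bigl((2C)^{-k}\bigr)$ of one point. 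The assertion that they ``cluster tightly along the original unit segment'' is unsubstantiated, and clustering along a unit segment would only give $L(S(T_k))=O(1)$ in an absolute sense, not the exponentially small bound (relative to $L(T_k)$) that the claimed inequality requires. Note also that if your construction really worked for all $\epsi>0$ it would prove nothing new for $\epsi<\pi/3$, since then $C<1$ and your bound is uniformly bounded in $k$ --- a sign that the interesting regime is precisely the one your angle computation does not exploit.

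Two further points. First, the bookkeeping is inconsistent: $2^{k+1}$ cherries of ``scale $C^{-k}$'' contribute total length $\Theta\bigl(2^{k+1}C^{-k}\bigr)=\Theta\bigl((2/C)^k\bigr)$, which equals your claimed $\sqrt{3}\,C^{k-1}$ only when $C=\sqrt{2}$; to get the geometric series $\sum_j C^{k-j}$ the legs must shrink by the factor $r=1/(2C)$ per level (as in the paper), not by $1/C$. Second, the case $\epsi=\pi/3$ cannot be obtained ``by passing to the limit $C\to 1$'': $F_2(\cdot,n)$ is monotone increasing in $\epsi$, so lower bounds for $F_2(\epsi,n)$ with $\epsi>\pi/3$ give information in the wrong direction about $F_2(\pi/3,n)$. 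One must run the construction at $\epsi=\pi/3$ itself, where $2r=1$ and the geometric series degenerates to $k$ equal terms, yielding $L(T_k)\to 2\sqrt{3}k$ against $L(S(T_k))\leq 2+\delta$.
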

\begin{proof}
Let $\pi/3\leq \epsi<2\pi/3$ and $k\geq 1$.
We construct an $\epsi$-approximate Steiner tree with $2^{k+1}$ terminals.
Let $r=\sin(\frac{\pi}{3}-\frac{\epsi}{2})$.
Let $C_0,C_1,\dots,C_k$ be concentric circles with common centre $o$ and with $C_i$ of radius $r^i$.

First, we construct ``half'' the tree with $2^k$ terminals on $C_k$ and Steiner points on the other circles.
Fix any $p_1\in C_0$.
There are two tangent lines from $p_1$ to $C_1$.
Denote the points where they touch $C_1$ by $p_2$ and $p_3$, chosen such that $\myangle p_2p_1p_3$ is positively oriented.
See Figure~\ref{circlefig}.
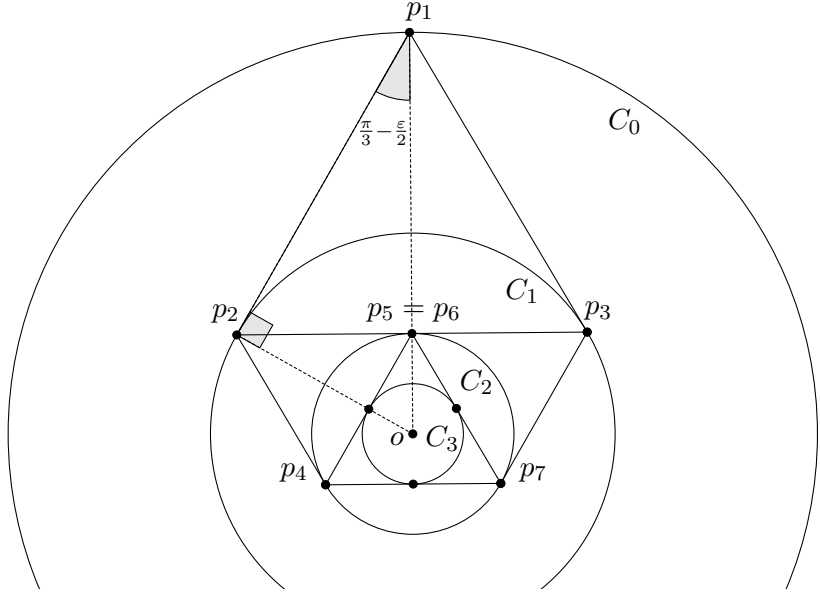
\begin{figure}
\centering
\begin{tikzpicture}[line cap=round,line join=round,>=triangle 45,x=1.0cm,y=1.0cm, scale=1.5]
\clip(-0.9950503580131429,-0.025751107804548312) rectangle (6.548804899716984,5.196917916777834);
\draw [shift={(2.711025283914615,4.889402720459528)},fill=black,fill opacity=0.1] (0,0) -- (-119.53228973996148:0.6) arc (-119.53228973996148:-89.53228973996147:0.6) -- cycle;
\draw[fill=black,fill opacity=0.1] (1.3979069594244686,2.100318487445869) -- (1.512392731994789,2.3024055879814576) -- (1.3103056314592003,2.416891360551778) -- (1.19581985888888,2.2148042600161895) -- cycle; 
\draw(2.74,1.34) circle (3.5495209826366327cm);
\draw(2.74,1.34) circle (1.7747604913183164cm);
\draw(2.74,1.34) circle (0.8873802456591582cm);
\draw(2.74,1.34) circle (0.4436901228295791cm);
\draw [dash pattern=on 1pt off 1pt] (2.711025283914615,4.889402720459528)-- (2.74,1.34);
\draw [dash pattern=on 1pt off 1pt] (2.74,1.34)-- (1.19581985888888,2.2148042600161895);
\draw [dash pattern=on 1pt off 1pt] (1.19581985888888,2.2148042600161895)-- (2.711025283914615,4.889402720459528);
\draw (2.711025283914615,4.889402720459528)-- (1.19581985888888,2.2148042600161895);
\draw (2.711025283914615,4.889402720459528)-- (4.269692783068428,2.2398971002135744);
\draw (1.19581985888888,2.2148042600161895)-- (1.9751536084657872,0.8900514498932124);
\draw (1.19581985888888,2.2148042600161895)-- (2.7327563209786545,2.2273506801148817);
\draw (4.269692783068428,2.2398971002135744)-- (2.7327563209786545,2.2273506801148817);
\draw (4.269692783068428,2.2398971002135744)-- (3.5120900705555593,0.9025978699919045);
\draw (1.9751536084657872,0.8900514498932124)-- (2.3539549647222158,1.5587010650040496);
\draw (1.9751536084657872,0.8900514498932124)-- (2.743621839510673,0.8963246599425588);
\draw (3.5120900705555593,0.9025978699919045)-- (3.1224231957671087,1.5649742750533941);
\draw (3.5120900705555593,0.9025978699919045)-- (2.743621839510673,0.8963246599425588);
\draw (2.7327563209786545,2.2273506801148817)-- (2.3539549647222158,1.5587010650040496);
\draw (2.7327563209786545,2.2273506801148817)-- (3.1224231957671087,1.5649742750533941);
\draw [fill=black] (2.74,1.34) circle (1pt);
\draw[color=black] (2.6,1.3) node {$o$};
\draw[color=black] (4.6,4.1) node {$C_0$};
\draw[color=black] (3.7,2.6) node {$C_1$};
\draw[color=black] (3.3,1.8) node {$C_2$};
\draw[color=black] (3,1.3) node {$C_3$};
\draw [fill=black] (2.711025283914615,4.889402720459528) circle (1pt);
\draw[color=black] (2.8042497500163988,5.076479008454132) node {$p_1$};
\draw [fill=black] (1.19581985888888,2.2148042600161895) circle (1pt);
\draw[color=black] (1.1,2.4049250420010893) node {$p_2$};
\draw [fill=black] (4.269692783068428,2.2398971002135744) circle (1pt);
\draw [fill=black] (1.9751536084657872,0.8900514498932124) circle (1pt);
\draw [fill=black] (3.5120900705555593,0.9025978699919045) circle (1pt);
\draw [fill=black] (2.7327563209786545,2.2273506801148817) circle (1pt);
\draw[color=black] (2.75,2.4158740336668796) node {$p_5=p_6$};
\draw [fill=black] (4.269692783068428,2.239897100213574) circle (1pt);
\draw[color=black] (4.369955558224539,2.426823025332671) node {$p_3$};
\draw [fill=black] (1.9751536084657872,0.8900514498932124) circle (1pt);
\draw[color=black] (1.7,1) node {$p_4$};
\draw [fill=black] (3.5120900705555593,0.9025978699919045) circle (1pt);
\draw[color=black] (3.8,1) node {$p_7$};
\draw[color=black] (2.48,4) node {$\scriptstyle \frac{\pi}{3}-\frac{\epsi}{2}$};
\draw [fill=black] (2.3539549647222158,1.5587010650040496) circle (1pt);
\draw [fill=black] (3.1224231957671087,1.5649742750533941) circle (1pt);
\draw [fill=black] (2.743621839510673,0.8963246599425588) circle (1pt);
\end{tikzpicture}
\caption{Constructing a lower bound in the plane}\label{circlefig}
\end{figure}
Note that $\myangle p_2p_1p_3=2\pi/3-\epsi$.

In general, for each $i=1,\dots,k$, once $p_{2^{i-1}},p_{2^{i-1}+1},\dots,p_{2^i-1}\in C_{i-1}$ have been determined, for each $p_j\in C_{i-1}$, let $p_{2j}$ and $p_{2j+1}$ be the two points where the tangents from $p_j$ touch $C_i$, chosen such that $\myangle p_{2j}p_jp_{2j+1}$ is positively oriented.
Again, $\myangle p_{2j}p_jp_{2j+1}=2\pi/3-\epsi$.
The points $p_{2^k},\dots,p_{2^{k+1}-1}\in C_k$ will be $2^k$ of the terminals.
We join each $p_j$ to $p_{2j}$ and $p_{2j+1}$, for $j=1,\dots,2^k-1$.

Next, we ``double'' the tree, by choosing one of the directions on the tangent line of $C_0$ at $p_1$, and moving each $p_i$ in that direction by a distance of $\delta$, where $\delta>0$ is very small.
Denote the moved points by $p'_i$.
We move $o$ in the same direction to obtain $o'$.
The moved points $p'_{2^k},\dots,p'_{2^{k+1}-1}$ will give another $2^k$ terminals.
We join $p'_j$ to $p'_{2j}$ and $p'_{2j+1}$, for $j=1,\dots,2^k-1$.
Finally, we join $p_1$ and $p_1'$.
All $p_j$ and $p'_j$ with $j<2^k$ are Steiner points.
Each angle at a Steiner point is one of three values $2\pi/3-\epsi$, $5\pi/6-\epsi/2$, and $\pi/6+\epsi/2$.
These all belong to the interval $[2\pi/3-\epsi,2\pi/3+\epsi]$, since $\epsi\geq\pi/3$.
Thus, we obtain a full $\epsi$-approximate Steiner tree $T$ on $2^{k+1}$ terminals, all on the circle $C_k$ of radius~$r^k$.
Note that many of the $p_j$ coincide.
For instance, it is always the case that~$p_5=p_6$.
This is allowed in our definition of an $\epsi$-approximate Steiner tree.
Alternatively, we could have slightly perturbed the radii of the circles by $\delta$ to ensure that all $p_j$ are distinct.

Next, we calculate $L(T)$.
An edge from a point of $T$ on $C_{i}$ to a point on $C_{i+1}$ has length $r^i\cos(\frac{\pi}{3}-\frac{\epsi}{2})$.
Therefore,
\begin{align*}
L(T) &= \delta + 2\left(2\cos(\tfrac{\pi}{3}-\tfrac{\epsi}{2})+4r\cos(\tfrac{\pi}{3}-\tfrac{\epsi}{2})+\dots +2^kr^{k-1}\cos(\tfrac{\pi}{3}-\tfrac{\epsi}{2})\right) \\
&= \delta + 4\cos(\tfrac{\pi}{3}-\tfrac{\epsi}{2})\left(1+2r+\dots+(2r)^{k-1}\right) \\
&= \delta + 4\cos(\tfrac{\pi}{3}-\tfrac{\epsi}{2})\frac{1-(2r)^k}{1-2r}
\end{align*}
if $\epsi>\pi/3$, and $L(T)=\delta+2\sqrt{3}k$ if $\epsi=\pi/3$.
We form a Steiner tree $S$ with a degeneration of the topology of $T$ by joining each $p_i$ to $o$, each $p_i'$ to $o'$, and $o$ to $o'$.
Then $L(S(T))\leq L(S)=\delta+2 (2r)^{k}$, which equals $\delta+2$ if $\epsi=\pi/3$.

Therefore, if $\epsi>\pi/3$,
\[ F_2(\epsi,2^{k+1}) \geq \frac{\delta + 4\cos(\frac{\pi}{3}-\frac{\epsi}{2})\dfrac{1-(2r)^k}{1-2r}}{\delta+2(2r)^{k}}-1\]
for each $\delta>0$, hence
\begin{align*}
F_2(\epsi,2^{k+1}) &\geq \frac{4\cos(\frac{\pi}{3}-\frac{\epsi}{2})\left(1-(2r)^k)\right)}{2(2r)^{k}(1-2r)} - 1 \\
&= \frac{2\cos(\frac{\pi}{3}-\frac{\epsi}{2})}{1-2r}\left(\left(\frac{1}{2r}\right)^k-1\right)-1 \\
&= \frac{2\sqrt{1-\frac{1}{4C^2}}}{1-\frac{1}{C}}(C^k-1)-1 \\
&= \frac{\sqrt{4C^2-1}}{C-1}(C^k-1)-1 > \sqrt{3}\frac{C^k-1}{C-1}-1,
\end{align*}
where $C=1/(2r)=(2\sin(\frac{\pi}{3}-\frac{\epsi}{2}))^{-1}$.
Similarly, if $\epsi=\pi/3$, then \[F_2(\epsi,2^{k+1})\geq \frac{\delta+2\sqrt{3}k}{2+\delta}-1,\] and letting $\delta\to 0$, we obtain the required result.
\end{proof}

\section{\texorpdfstring{Upper Bounds for Small $\epsi$ (Proof of Theorem~\ref{thm:plane})}{Upper Bounds for Small Epsilon (Proof of Theorem~\ref{thm:plane})}}\label{section:small}
In this section, we prove Theorem~\ref{thm:plane} using an unfolding algorithm described in \cite{BTW} and \cite{RWW} based on Melzak's algorithm for finding the shortest Steiner tree for a fixed Steiner topology (if this shortest tree happens to be what we call a locally minimum Steiner tree).
This algorithm unfolds an approximate Steiner tree into a broken line segment.
First, we describe this unfolding and then use it in the special cases of $3$ and $4$ terminals in the plane to determine the exact values of $F_2(\epsi,3)$ and $F_2(\epsi,4)$ (Propositions~\ref{prop:3unfold} and \ref{prop:4unfold}).
Then the proof of Theorem~\ref{thm:plane} should be clear.

The following inequality and its proof forms the basis for the unfolding algorithm.
\begin{lemma}\label{lemma:triangle}
Let $\triangle abc$ be an equilateral triangle in $\bR^d$.
Then for any $x\in\bR^d$, $\length{xa}\leq\length{xb}+\length{xc}$, with equality iff $x$ is on the minor arc $\arc{bc}$ of the circumcircle of $\triangle abc$.
\end{lemma}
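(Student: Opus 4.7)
The plan is to derive the inequality directly from Ptolemy's inequality in $\bR^d$: for any $p_1,p_2,p_3,p_4\in\bR^d$,
\[\length{p_1p_3}\cdot\length{p_2p_4}\leq\length{p_1p_2}\cdot\length{p_3p_4}+\length{p_1p_4}\cdot\length{p_2p_3},\]
with equality if and only if the four points lie on a common circle in the cyclic order $p_1,p_2,p_3,p_4$. Applying this with $(p_1,p_2,p_3,p_4)=(a,b,x,c)$ gives
\[\length{ax}\cdot\length{bc}\leq\length{ab}\cdot\length{cx}+\length{ac}\cdot\length{bx},\]
and since $\length{ab}=\length{bc}=\length{ac}$, dividing by this common side length immediately yields $\length{xa}\leq\length{xb}+\length{xc}$.

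For the equality statement, equality in Ptolemy forces $a,b,x,c$ to be concyclic in this cyclic order. The points $a,b,c$ already determine the circumcircle of $\triangle abc$, so $x$ must lie on this circle, and the cyclic order $a,b,x,c$ places $x$ on the arc joining $b$ and $c$ that avoids $a$, which is precisely the minor arc $\arc{bc}$. Conversely, any $x$ lying on $\arc{bc}$ realises the concyclic cyclic order $a,b,x,c$, so equality holds.

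The only real ingredient beyond basic geometry is the $d$-dimensional form of Ptolemy's inequality, which in fact presents no new obstacle over the planar case: the standard proof by inversion centred at $a$ with unit radius sends $b,x,c$ to three points whose pairwise distances are obtained from the original ones by the usual inversion rescaling, and the triangle inequality among the three images, translated back, is Ptolemy, with equality iff the three images are collinear in the correct order — equivalently, iff the four original points are concyclic in the correct cyclic order. Hence the proof works uniformly in all dimensions $d\geq 2$, and the only place where care is needed is simply getting the cyclic order right so that the equality case matches the minor arc $\arc{bc}$ rather than one of the other two arcs.
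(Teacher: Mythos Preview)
Your proof via Ptolemy's inequality is correct. Applying Ptolemy with $(p_1,p_2,p_3,p_4)=(a,b,x,c)$ and dividing by the common side length gives the inequality immediately, and the equality characterisation in Ptolemy (concyclic in the order $a,b,x,c$) is exactly the condition that $x$ lies on the arc $\arc{bc}$ not containing $a$, which for an equilateral triangle is the minor arc. Your sketch of the inversion proof of Ptolemy in $\bR^d$ is also fine.

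The paper takes a different route: it rotates $\triangle bxc$ by $\pi/3$ about the axis through $b$ perpendicular to the plane of $\triangle abc$, sending $c$ to $a$ and $x$ to some $x'$, so that $\length{xc}=\length{x'a}$; a projection argument gives $\length{xx'}\leq\length{bx}$, and then $\length{xa}\leq\length{xx'}+\length{x'a}\leq\length{xb}+\length{xc}$. Your Ptolemy argument is shorter and more self-contained as a proof of this lemma in isolation. The paper's rotation argument, however, is not chosen by accident: the $\pi/3$ rotation is precisely the operation underlying the Melzak unfolding algorithm developed immediately afterwards (Lemma~\ref{lemma:unfolding} and the proofs of Propositions~\ref{prop:3unfold}, \ref{prop:4unfold}, and Theorem~\ref{thm:plane}), so their proof doubles as a motivation for that construction. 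Your approach gives no such hint, but as a proof of the stated lemma it is entirely valid.
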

\begin{proof}
The proof is essentially the same as the classical proof that the Fermat point of a triangle with all angles less than $2\pi/3$ minimizes the sum of the distances to the vertices.
Because there are only $4$ points to consider, we may assume without loss of generality that~$x,a,b,c\in\bR^3$.

Rotate $\triangle bxc$ by an angle of $\pi/3$ around the axis through $b$ perpendicular to the plane $\Uppi$ through $a$, $b$ and $c$, such that $c$ is rotated to $a$.
Then $b$ stays fixed, and $x$ is rotated to $x'$, say.
Also, $\length{xc}=\length{x'a}$.
Let $p\colon\bR^3\to\Uppi$ be the orthogonal projection onto $\Uppi$.
Then $\triangle bp(x)p(x')$ is equilateral.
Since $xx'$ is parallel to $\Uppi$, $\length{xx'}=\length{p(x)p(x')}=\length{bp(x)}\leq \length{bx}$.
Therefore, $\length{xa}\leq\length{xx'}+\length{x'a}\leq\length{bx}+\length{xc}$.
Equality holds iff $x$ is in the plane $\Uppi$ and $a$, $x'$, $x$ are collinear, which holds iff $\myangle bx'a=2\pi/3$, iff $\myangle bxc=2\pi/3$, iff $x$ is on the minor arc $\arc{bc}$ of the circumcircle of $\triangle abc$.
\end{proof}

Consider a family of $n$ terminals $N_n$ in $\bR^d$ and a full Steiner topology $\CT_n$ for those terminals.
Choose one of the terminals $t_0$ as root of $\CT_n$.
We define a \define{Melzak sequence} of $N_n$ and $\CT_n$ to be two sequences $N_n,N_{n-1},\dots,N_2$ and $\CT_n,\CT_{n-1},\dots,\CT_2$, where each $\CT_i$ is a full Steiner topology on $N_i$ and with root $t_0$ (thus $t_0\in N_i$ for all $i$), and where we obtain $N_{i-1}$ and $\CT_{i-1}$ from $N_i$ and $\CT_i$ as follows: choose any cherry of $\CT_i$ with two terminals $t_1, t_2\neq t_0$ and Steiner point $s$ with neighbours $t_1$, $t_2$ and $p$, say, replace $t_1$ and $t_2$ in $N_{i}$ by any point $t\in\bR^d$ such that $\triangle t_1t_2t$ is an equilateral triangle, thus obtaining $N_{i-1}$, and remove $s$ and its incident edges from $\CT_{i}$ and replace them by the edge $pt$, to obtain $\CT_{i-1}$.
If $N_2=\set{t_0,t}$, say, then we call the line segment $t_0t$ an \define{unfolding} of $N_n$ with respect to the topology $\CT_n$.

It is not difficult to see that if there is more than one cherry to choose from at a certain stage, it does not matter which we choose first: we may in fact process both cherries in parallel.
(This is equivalent to saying that in the subtree of $\CT_n$ on the Steiner points, it does not matter in which order we remove leaves, and that this may be done in parallel.)

Lemma~\ref{lemma:triangle} and induction immediately gives the following, which is Theorem~3.1 of \cite{RWW} and Theorem~4.2 of \cite{BTW}:
\begin{lemma}\label{lemma:unfolding}
The length of any unfolding of a terminal set $N_n\subset\bR^d$ with respect to a full Steiner topology $\CT_n$ is a lower bound for the shortest tree on $N_n$ which has $\CT_n$ as topology \textup{(}allowing degenerate topologies\textup{)}.
\end{lemma}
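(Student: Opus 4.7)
The plan is a straightforward induction on $n$, tracking one step of the Melzak sequence at a time. The base case $n=2$ is immediate: the topology $\CT_2$ is a single edge, the unfolding is the segment joining the two terminals of $N_2$, and any tree with this topology on $N_2$ has exactly this length.

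For the inductive step, I would fix any (possibly degenerate) Steiner tree $T_n$ realizing $\CT_n$ on $N_n$, and consider the first step of the chosen Melzak sequence: it contracts a cherry with terminals $t_1, t_2$ and Steiner point $s$ (whose third neighbour in $\CT_n$ is some vertex $p$), introducing the new terminal $t$ for which $\triangle tt_1t_2$ is equilateral. I would then build $T_{n-1}$ from $T_n$ by deleting $s, t_1, t_2$ together with the three edges $st_1, st_2, sp$ and inserting the single edge $pt$; this tree realizes the topology $\CT_{n-1}$ on $N_{n-1}$.

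The heart of the argument is the pointwise length comparison
\[ \length{st_1}+\length{st_2}+\length{sp}\geq\length{pt}, \]
which is exactly $L(T_n)\geq L(T_{n-1})$. This would follow by combining Lemma~\ref{lemma:triangle} applied to $\triangle tt_1t_2$ (with $s$ playing the role of $x$), which yields $\length{st_1}+\length{st_2}\geq\length{st}$, with the ordinary triangle inequality in $\triangle spt$, which yields $\length{st}+\length{sp}\geq\length{pt}$. Applying the induction hypothesis to $T_{n-1}$, and noting that the Melzak sequence for $N_{n-1},\CT_{n-1}$ obtained by simply stripping the first step produces the same final unfolding segment $t_0 t$, chains the inequalities together and closes the induction.

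I do not anticipate a genuine obstacle; essentially the lemma is a bookkeeping reformulation of Lemma~\ref{lemma:triangle}. The only point meriting a brief remark is the degenerate situation in which $s$ coincides with one of $p, t_1, t_2$ or an edge has zero length, but both Lemma~\ref{lemma:triangle} and the triangle inequality hold unconditionally (with equality in the limiting cases), so the induction carries through unchanged.
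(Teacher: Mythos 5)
Your proof is correct and is exactly the argument the paper has in mind: the paper gives no written proof, stating only that Lemma~\ref{lemma:triangle} and induction immediately yield the result, and your induction (contract one cherry, compare $\length{st_1}+\length{st_2}+\length{sp}\geq\length{st}+\length{sp}\geq\length{pt}$ via Lemma~\ref{lemma:triangle} and the triangle inequality, then invoke the hypothesis on the stripped Melzak sequence) is precisely that omitted argument spelled out.
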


Next, we describe the plan of the proof of Theorem~\ref{thm:plane}.
First, we unfold a planar $\epsi$-approximate Steiner tree into a polygonal path of the same length, and estimate the \emph{turn} at each internal vertex of the path.
By Lemma~\ref{lemma:unfolding}, the length between the endpoints of the unfolding is a lower bound on the length of a Steiner minimal tree on the same terminal set.
By a result of E.~Schmidt \cite{Schmidt} (Lemma~\ref{schmidt} below), this length is minimised among all polygonal paths with the same angles and edges of the same length, by a planar, convex path.
Finally, we minimise the length of the endpoint among all polygonal paths of the same total length and the same sum of turns.

Before providing the detail of the general case, we show how to determine exact values for small $n$.

\begin{proposition}\label{prop:3unfold}
For all $\epsi\in(0,\pi/3)$, $F_2(\epsi,3)=G_2(\epsi,3)=\frac{1}{\cos \epsi/2}-1$.
\end{proposition}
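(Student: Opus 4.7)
The plan is to use Melzak's unfolding to replace the tree by a single segment and then verify the inequality $L(S(T)) \geq L(T)\cos(\epsi/2)$ by a direct computation. Concretely, take $t_0$ as root and apply unfolding to the single cherry $\{t_1, t_2\}$, obtaining the apex $t$ of the equilateral triangle on $t_1 t_2$ on the side opposite the Steiner point $s$; Lemma~\ref{lemma:unfolding} immediately yields $L(S(T)) \geq |t_0 t|$, so it remains to show $L(T) \leq |t_0 t|/\cos(\epsi/2)$.

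To compute $|t_0 t|$, I would set $d = |st_0|$, $a = |st_1|$, $b = |st_2|$ and write the angle deviations $\gamma = \alpha - 2\pi/3$, $\gamma_j = \beta_j - 2\pi/3$ from the ideal Steiner angles at $s$, so that $\gamma + \gamma_1 + \gamma_2 = 0$ and $|\gamma|, |\gamma_j| \leq \epsi$. Placing $s$ at the origin of the complex plane with $t_1 = a$, $t_2 = b e^{i\alpha}$, $t_0 = d e^{-i\beta_1}$, the apex becomes $t = a e^{i\pi/3} + b e^{i(\alpha - \pi/3)}$. Expanding $|t_0 t|^2$ and simplifying via $\cos(\beta_j + \pi/3) = -\cos\gamma_j$ yields the key identity
\[
|t_0 t|^2 = a^2 + b^2 + d^2 + 2ab\cos\gamma + 2ad\cos\gamma_1 + 2bd\cos\gamma_2,
\]
which rearranges to $L(T)^2 - |t_0 t|^2 = 4\bigl[ab\sin^2(\gamma/2) + ad\sin^2(\gamma_1/2) + bd\sin^2(\gamma_2/2)\bigr]$.

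The upper bound now reduces to showing this last expression is at most $L(T)^2 \sin^2(\epsi/2)$. Since $\sin^2(x/2)$ is convex on $[-\epsi,\epsi]\subset(-\pi/2,\pi/2)$, the maximum over the hexagonal region $\{\gamma + \gamma_1 + \gamma_2 = 0,\ |\gamma|,|\gamma_j| \leq \epsi\}$ is attained at a vertex, i.e.\ a permutation of $(0, \epsi, -\epsi)$. Evaluating at the three types of vertex, the maximum equals $4M(L(T)-M)\sin^2(\epsi/2)$ with $M = \max(a,b,d)$, and the AM--GM inequality $4M(L(T)-M) \leq L(T)^2$ completes the step. I expect this maximization to be the main obstacle: the cruder bound $\sin^2(\gamma_j/2) \leq \sin^2(\epsi/2)$, which ignores $\gamma+\gamma_1+\gamma_2 = 0$, would require $(a+b+d)^2 \geq 4(ab+ad+bd)$, which fails already for $a=b=d$, so the sum-to-zero constraint is essential.

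For the matching lower bound (which, since $S(T)$ will be non-degenerate, also yields $G_2(\epsi,3)$), I would take the symmetric one-parameter family with $a=b$, $\alpha = 2\pi/3 - \epsi$, $\beta_1 = \beta_2 = 2\pi/3 + \epsi/2$, and $t_0$ on the axis of symmetry at distance $d > 0$ from $s$. Then $L(T) = d + 2a$ and, by the identity above, $|t_0 t| = d + 2a\cos(\epsi/2)$. Since $\alpha < 2\pi/3$ and (by continuity) every angle of the terminal triangle remains less than $2\pi/3$ for small $d$, the Fermat point is interior and $S(T)$ is a locally minimum Steiner tree of length exactly $|t_0 t|$. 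The ratio $L(T)/L(S(T)) = (d+2a)/(d+2a\cos(\epsi/2))$ increases to $1/\cos(\epsi/2)$ as $d \to 0^+$, giving the desired equality $F_2(\epsi,3) = G_2(\epsi,3) = 1/\cos(\epsi/2) - 1$.
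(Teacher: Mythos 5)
Your proof is correct, but the upper bound goes by a genuinely different route from the paper's. The paper unfolds $T$ into a three\nobreakdash-edge polygonal path $t_0st_1't_2'$ with turns $\epsi_1$ at $s$ and $\epsi_3$ at $t_1'$, traps that path in the triangle formed by extending its first and last edges, and applies the reverse triangle inequality (Lemma~\ref{lemma:reversetriangle}) with exterior angle $\abs{\epsi_1+\epsi_3}=\abs{\epsi_2}\leq\epsi$; this turn\nobreakdash-counting is exactly what scales up to Theorem~\ref{thm:plane}. You instead unfold all the way to the single segment $t_0t$ and compute $\length{t_0t}^2$ exactly, reducing the claim to the algebraic inequality $4\bigl[ab\sin^2(\gamma/2)+ad\sin^2(\gamma_1/2)+bd\sin^2(\gamma_2/2)\bigr]\leq(a+b+d)^2\sin^2(\epsi/2)$, settled by convexity of $\sin^2(x/2)$ on $[-\epsi,\epsi]$ (maximum at a vertex of the hexagon cut out by $\gamma+\gamma_1+\gamma_2=0$ and the box constraints) together with $4M(L-M)\leq L^2$. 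I have checked the identity for $\length{t_0t}^2$, the vertex enumeration, and the evaluation $\max\set{a(b+d),b(a+d),d(a+b)}=M(L-M)$ with $M=\max(a,b,d)$: all correct, and your remark that the constraint $\gamma+\gamma_1+\gamma_2=0$ cannot be dropped is exactly the right diagnosis. What your route buys is a self\nobreakdash-contained computation needing only Lemma~\ref{lemma:unfolding}; what it loses is any prospect of handling $n>3$, where the closed form for the unfolded endpoint is unmanageable. One caveat you share with the paper: both arguments assume the three angles at $s$ sum to $2\pi$ (i.e.\ $\gamma+\gamma_1+\gamma_2=0$), which fails to be automatic once $\epsi\geq 2\pi/9$, since the three edges could then lie in a closed half\nobreakdash-plane; neither proof addresses that edge case.

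For the lower bound you use essentially the paper's family (isosceles cherry with unit legs and a root edge of length $d\to 0$), but with the opposite sign: you take $\myangle t_1st_2=2\pi/3-\epsi$ and $\myangle t_0st_1=\myangle t_0st_2=2\pi/3+\epsi/2$, whereas the paper takes $\epsi_3=+\epsi$, i.e.\ $\myangle t_1st_2=2\pi/3+\epsi$. Your sign is the one that makes the non\nobreakdash-degeneracy argument go through: as $d\to0$ the angle of $\triangle t_0t_1t_2$ at $t_0$ tends to $\myangle t_1st_2$, which must be strictly less than $2\pi/3$ for the Fermat point to be interior and for $L(S(T))$ to equal $\length{t_0t}=d+2a\cos(\epsi/2)$; that holds for your choice but not for the paper's as literally written. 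The limiting ratio $1/\cos(\epsi/2)$ is the same either way, so your version is a cleaner instance of the same construction.
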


\begin{proof}
We show that $F_2(\epsi,3)\leq(\cos \epsi/2)^{-1}-1$.
Consider an $\epsi$-approximate Steiner tree $T$ on three terminals $t_0$, $t_1$, $t_2$ in the plane, with Steiner point $s$ and edges $e_i=st_i$, $i=0,1,2$, numbered in such a way that $e_0$, $e_1$, $e_2$ are in anti-clockwise order around $s$.
See Fig.~\ref{fig:3unfold}.
\begin{figure}
\centering
\definecolor{ffqqqq}{rgb}{1,0,0}
\begin{tikzpicture}[line cap=round,line join=round,>=triangle 45,x=1.0cm,y=1.0cm, scale=0.6]
\fill[color=ffqqqq,fill=ffqqqq,fill opacity=0.05] (1.27,-0.38) -- (7.66,0.52) -- (5.25,-5.46) -- cycle;
\fill[color=ffqqqq,fill=ffqqqq,fill opacity=0.05] (1.27,-0.38) -- (4.66,2.8) -- (5.72,-1.73) -- cycle;
\draw [shift={(4.66,2.8)},fill=black,fill opacity=0.05] (0,0) -- (-136.81:0.67) arc (-136.81:-37.19:0.67) -- cycle;
\draw (2.92,5.05)-- (4.66,2.8);
\draw (4.66,2.8)-- (7.66,0.52);
\draw (1.27,-0.38)-- (4.66,2.8);
\draw [line width=1.2pt,dash pattern=on 3pt off 3pt] (5.72,-1.73)-- (4.66,2.8);
\draw [line width=1.2pt,dash pattern=on 3pt off 3pt] (5.72,-1.73)-- (5.25,-5.46);
\draw [color=ffqqqq] (1.27,-0.38)-- (7.66,0.52);
\draw [color=ffqqqq] (7.66,0.52)-- (5.25,-5.46);
\draw [color=ffqqqq] (5.25,-5.46)-- (1.27,-0.38);
\draw [color=ffqqqq] (1.27,-0.38)-- (4.66,2.8);
\draw [color=ffqqqq] (4.66,2.8)-- (5.72,-1.73);
\draw [color=ffqqqq] (5.72,-1.73)-- (1.27,-0.38);
\fill [color=black] (2.92,5.05) circle (3pt);
\draw[color=black] (2.5,5.1) node {$t_0$};
\fill [color=black] (1.27,-0.38) circle (3pt);
\draw[color=black] (0.8,-0.1) node {$t_1$};
\fill [color=black] (7.66,0.52) circle (3pt);
\draw[color=black] (8.2,0.6) node {$t_2$};
\fill [color=black] (4.7,2.8) circle (3pt);
\draw[color=black] (4.9,3.11) node {$s$};
\draw (3.4,2.9) node {$\scriptstyle\frac{2\pi}{3}+\epsi_1$};
\draw (6.2,2.9) node {$\scriptstyle\frac{2\pi}{3}+\epsi_2$};
\draw (5.0,1.7) node {$\scriptstyle\frac{2\pi}{3}+\epsi_3$};
\draw[color=black] (3.3,3.85) node {$e_0$};
\draw[color=black] (6.9,1.7) node {$e_2$};
\draw[color=black] (2.3,1.3) node {$e_1$};
\fill [color=black] (5.72,-1.73) circle (3pt);
\draw[color=black] (6.2,-1.39) node {$t'_1$};
\fill [color=black] (4.66,2.8) circle (3pt);
\draw[color=black] (4.8,0.6) node {$e'_1$};
\fill [color=black] (5.25,-5.46) circle (3pt);
\draw[color=black] (5.81,-5.32) node {$t'_2$};
\fill [color=black] (5.72,-1.73) circle (3pt);
\fill [color=black] (5.25,-5.46) circle (3pt);
\draw[color=black] (5,-3.34) node {$e'_2$};
\end{tikzpicture}
\caption{Unfolding an $\epsi$-approximate Steiner tree on $3$ terminals}\label{fig:3unfold}
\end{figure}
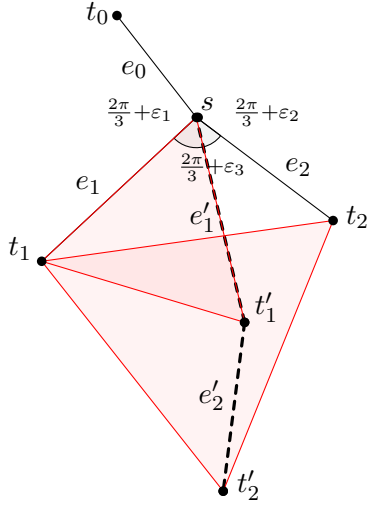
Let $\myangle t_0st_1=2\pi/3+\epsi_1$, $\myangle t_0st_2=2\pi/3+\epsi_2$ and $\myangle t_1st_2=2\pi/3+\epsi_3$, where $\abs{\epsi_i}\leq\epsi$, $i=1,2,3$.
Since $\epsi\leq\pi/3$, the three angles sum to $2\pi$, and $\epsi_1+\epsi_2+\epsi_3=0$.

We unfold the tree into a polygonal line of total length $L(T)$ as follows.
We rotate $e_1=st_1$ by an angle of $\pi/3$ around $s$ to obtain the edge $e_1'=st_1'$, say.
We rotate $e_2=st_2$ by an angle of $-\pi/3$ around $t_1$ to obtain the edge $e_2'=t_1't_2'$.
Then $t_0st_1't_2'$ is a polygonal line of length $L(T)$ (see Fig.~\ref{fig:3unfold}).
The turn from edge $e_0$ to $e_1'$ equals $\epsi_1$, and the turn from $e_1'$ to $e_2'$ equals $\epsi_3$.
Since $\abs{\epsi_1+\epsi_3}=\abs{\epsi_2}\leq\epsi<\pi$, the rays $\Ray{t_0s}$ and $\Ray{t_2't_1'}$ intersect in $p$, say.
Then $L(T)=\length{t_0s}+\length{st_1'}+\length{t_1't_2'}\leq\length{t_0p}+\length{pt_2'}$.
By Lemma~\ref{lemma:unfolding}, $L(S(T))\geq\length{t_0t_2'}$.
It follows that
\[ \frac{L(T)}{L(S(T))}\leq\frac{\length{t_0p}+\length{pt_2'}}{\length{t_0t_2'}}\leq\frac{1}{\cos{\epsi/2}}\quad\text{by Lemma~\ref{lemma:reversetriangle},}\]
and \[F_2(\epsi,3)=\sup \frac{L(T)}{L(S(T))}-1\leq\frac{1}{\cos{\epsi/2}}-1.\]

To show that $G_2(\epsi,3)\geq (\cos{\epsi/2})^{-1}-1$, consider an $\epsi$-approximate tree $T$ as above with $\epsi_1=\epsi_2=-\epsi/2$, $\epsi_3=\epsi$, $\length{t_0s}=\delta$ for arbitrarily small $\delta>0$, and $\length{t_1s}=\length{t_2s}=1$.
Then $L(T)=2+\delta$ and $L(S(T))=\delta+2\cos(\epsi/2)$.
Since all angles in $\triangle t_0t_1t_2$ are less than $2\pi/3$ if $\delta$ is small enough, $S(T)$ is not degenerate, hence $G_2(\epsi,3)\geq\frac{2+\delta}{\delta+2\cos\epsi/2}-1$ for all~$\delta>0$.
It follows that $G_2(\epsi,3)\geq(\cos \epsi/2)^{-1}-1$.
\end{proof}

\begin{proposition}\label{prop:4unfold}
For all $\epsi\in(0,\pi/3)$, $F_2(\epsi,4)=G_2(\epsi,4)=\frac{1}{\cos \epsi}-1$.
\end{proposition}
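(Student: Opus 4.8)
The plan is to mirror the proof of Proposition~\ref{prop:3unfold}, now carrying the two Steiner points of the unique full Steiner topology on four terminals. Write this topology as $t_0,t_1$ joined to a Steiner point $s_1$, as $t_2,t_3$ joined to a Steiner point $s_2$, and with $s_1$ joined to $s_2$. Denote the deviations from $2\pi/3$ of the three angles at $s_1$ by $\alpha_1,\alpha_2,\alpha_3$ (with $\alpha_1+\alpha_2+\alpha_3=0$ and $\abs{\alpha_i}\le\epsi$), and those at $s_2$ by $\beta_1,\beta_2,\beta_3$ (likewise), ordered so that $\alpha_2$ is the deviation of $\myangle t_0s_1s_2$ and $\beta_2$ that of $\myangle t_3s_2s_1$.

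First I would establish the upper bound $F_2(\epsi,4)\le(\cos\epsi)^{-1}-1$. Unfold $T$ exactly as in Proposition~\ref{prop:3unfold}, rotating successive edges by $\pm\pi/3$ as dictated by the Melzak construction, to develop the five edges of $T$ into a polygonal path $t_0\to s_1\to t_1'\to s_2'\to t_2'\to t_3'$ of the same total length $L(T)$, whose endpoint coincides with the unfolding of the terminal set, so that Lemma~\ref{lemma:unfolding} applies. A direct computation of the edge directions $d_1,\dots,d_5$ (normalising $d_1=0$) gives $d_2=\alpha_1$, $d_3=-\alpha_2$, $d_4=-\alpha_2+\beta_1$, $d_5=-\alpha_2-\beta_2$, so the successive turns are $\alpha_1,\alpha_3,\beta_1,\beta_3$ and the total turn is $-(\alpha_2+\beta_2)$. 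The key observation is that $\{d_1,d_2,d_3\}$ and $\{d_3,d_4,d_5\}$ each have range at most $\epsi$, being partial sums of the $\alpha$'s respectively the $\beta$'s; since both sets contain $d_3$, all five directions lie in a single interval of length at most $2\epsi$. Projecting the endpoint displacement $\sum_i\length{e_i}(\cos d_i,\sin d_i)$ onto the direction bisecting this interval then yields $\length{t_0t_3'}\ge L(T)\cos\epsi$, and combining with $L(S(T))\ge\length{t_0t_3'}$ gives $L(T)/L(S(T))\le(\cos\epsi)^{-1}$. (This is precisely the specialisation to $n=4$ of the argument for Theorem~\ref{thm:plane}.)

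For the matching lower bound I would exhibit a family of trees realising $G_2(\epsi,4)$. To make the unfolded path approximate a single corner of exterior angle $2\epsi$, saturate the total turn by taking $\alpha_2=\beta_2=-\epsi$, with the remaining turning concentrated at two adjacent vertices: $\alpha_1=\beta_3=0$ and $\alpha_3=\beta_1=\epsi$. Let the four terminal edges $s_1t_0,s_1t_1,s_2t_2,s_2t_3$ all have length $1$ and let the central edge $s_1s_2$ have length $\eta$. Then $L(T)=4+\eta$, and as $\eta\to 0$ the developed path degenerates to two segments of length $2$ meeting at exterior angle $2\epsi$, so $\length{t_0t_3'}\to 4\cos\epsi$; the terminals of this limiting configuration lie on a unit circle at the angles $\pi,5\pi/3,2\epsi,2\pi/3+2\epsi$. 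Granting that for these terminals the shortest tree $S(T)$ with topology $\CT$ is non-degenerate, the Melzak construction (built on Lemma~\ref{lemma:triangle}) gives $L(S(T))=\length{t_0t_3'}\to 4\cos\epsi$, whence $L(T)/L(S(T))\to(\cos\epsi)^{-1}$ and $G_2(\epsi,4)\ge(\cos\epsi)^{-1}-1$. Together with $G_2\le F_2\le(\cos\epsi)^{-1}-1$ this proves the proposition.

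I expect the verification that $S(T)$ is non-degenerate to be the main obstacle. Unlike the three-terminal case, saturating the bound forces the central edge $s_1s_2$ to shrink to zero, so one must confirm that the two Steiner points of the shortest tree do not collide with each other or with a terminal in the limit. The cleanest route is to fold the straight unfolding segment $\length{t_0t_3'}$ back through the two equilateral constructions and check that both recovered Steiner points land strictly inside the relevant circular arcs throughout $\epsi\in(0,\pi/3)$; the direction computation above, which locates where each arc sits, should reduce this to a finite verification.
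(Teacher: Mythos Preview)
Your proposal is correct and follows essentially the same route as the paper. The upper bound is obtained by the same unfolding into a five-edge polygonal path and the same observation that the first three and last three edge directions each span at most $\epsi$ and share the middle edge, so the total spread is at most $2\epsi$; you finish by projecting onto the bisecting direction, whereas the paper reorders the edges into a convex path, bounds it by a triangle, and invokes Lemma~\ref{lemma:reversetriangle}---these are equivalent. Your lower-bound example, with the cherry angles equal to $2\pi/3$, the two angles adjacent to the central edge equal to $2\pi/3-\epsi$, unit terminal edges and a vanishing central edge, is exactly the paper's example up to relabelling (the paper's $\epsi_1{=}0,\epsi_2{=}\epsi,\epsi_3{=}-\epsi,\epsi_4{=}0$). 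The non-degeneracy check you flag is indeed the only thing left to verify and is precisely what the paper asserts without detail; note that in the limit $\eta\to 0$ the two Steiner points of $S(T)$ \emph{do} coincide, so what you actually need is non-degeneracy for each small $\eta>0$, which your proposed Melzak fold-back verification will give.
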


\begin{proof}
Consider an $\epsi$-approximate Steiner tree on four terminals $t_1$, $t_2$, $t_3$, $t_4$, Steiner points $s_1$ and $s_2$, and edges $e_1=s_1t_1$, $e_2=s_1t_2$, $e_0=s_1s_2$, $e_3=s_2t_3$, $e_4=s_2t_4$, labelled in such a way that $e_0$, $e_1$, $e_2$ are in anti-clockwise order around $s_1$, and $e_0$, $e_4$, $e_3$ are in anti-clockwise order around $s_2$.
Also, let $\myangle t_1s_1t_2=2\pi/3+\epsi_1$, $\myangle t_1s_1s_2=2\pi/3+\epsi_2$, $\myangle s_1s_2t_4=2\pi/3+\epsi_3$ and $\myangle t_3s_2t_4=2\pi/3+\epsi_4$, where $\abs{\epsi_i}\leq\epsi$, $i=1,2,3,4$, and $\abs{\epsi_1+\epsi_2}, \abs{\epsi_3+\epsi_4}\leq\epsi$.
See Fig.~\ref{fig:4unfold}.
\begin{figure}
\definecolor{ffqqqq}{rgb}{1,0,0}
\definecolor{qqqqff}{rgb}{0,0,1}
\definecolor{xfqqff}{rgb}{0.4980392156862745,0.,1.}
\definecolor{uuuuuu}{rgb}{0.26666666666666666,0.26666666666666666,0.26666666666666666}
\centering
\begin{tikzpicture}[line cap=round,line join=round,>=triangle 45,x=1.0cm,y=1.0cm,scale=0.6]
\fill[color=ffqqqq,fill=ffqqqq,fill opacity=0.05] (7.63288,-0.9404) -- (7.49978,6.5132) -- (14.021336949647692,2.90166798124371) -- cycle;
\draw [shift={(4.66,2.8)},fill=black,fill opacity=0.05] (0,0) -- (-51.52222804085102:0.6655) arc (-51.52222804085102:52.59197449391862:0.6655) -- cycle;
\fill[color=ffqqqq,fill=ffqqqq,fill opacity=0.05] (-1.76398,-0.1418) -- (-1.55102,6.5132) -- (-7.420899062185439,3.3701287699899334) -- cycle;
\draw [shift={(1.56352,3.02598)},fill=black,fill opacity=0.05] (0,0) -- (131.7689900996318:0.6655) arc (131.7689900996318:223.59137126466155:0.6655) -- cycle;
\draw [shift={(-1.73736,-8.76668)},line width=0.4pt,fill=black,fill opacity=0.15] (0,0) -- (-16.408628735338457:1.9965) arc (-16.408628735338457:11.768990099631793:1.9965) -- cycle;
\draw [color=qqqqff] (1.56352,3.02598)-- (4.66,2.8);
\draw [color=qqqqff] (4.66,2.8)-- (7.49978,6.5132);
\draw [color=qqqqff] (7.63288,-0.9404)-- (4.66,2.8);
\draw [line width=1.2pt,dash pattern=on 3pt off 3pt] (9.385721420315313,3.5043896024026813)-- (4.66,2.8);
\draw [line width=1.2pt,dash pattern=on 3pt off 3pt] (9.385721420315313,3.5043896024026826)-- (14.021336949647692,2.90166798124371);
\draw [color=ffqqqq] (7.63288,-0.9404)-- (7.49978,6.5132);
\draw [color=ffqqqq] (7.49978,6.5132)-- (14.021336949647692,2.90166798124371);
\draw [color=ffqqqq] (14.021336949647692,2.90166798124371)-- (7.63288,-0.9404);
\draw [color=qqqqff] (1.56352,3.02598)-- (-1.55102,6.5132);
\draw [color=qqqqff] (1.56352,3.02598)-- (-1.76398,-0.1418);
\draw [line width=1.2pt,dash pattern=on 3pt off 3pt] (-3.0137711085851695,2.0723192388972143)-- (-7.420899062185439,3.3701287699899334);
\draw [color=ffqqqq] (-1.76398,-0.1418)-- (-1.55102,6.5132);
\draw [color=ffqqqq] (-1.55102,6.5132)-- (-7.420899062185439,3.3701287699899334);
\draw [color=ffqqqq] (-7.420899062185439,3.3701287699899334)-- (-1.76398,-0.1418);
\draw [line width=1.2pt,dash pattern=on 3pt off 3pt] (-3.0137711085851695,2.0723192388972143)-- (1.56352,3.02598);
\draw (7.63288,-0.9404)-- (9.385721420315313,3.5043896024026813);
\draw (-3.0137711085851695,2.0723192388972143)-- (-1.55102,6.5132);
\draw [line width=1.2pt] (-2.9605311085851698,-4.2366207611027855)-- (-7.36765906218544,-2.9388112300100664);
\draw [line width=1.2pt] (-2.9605311085851698,-4.2366207611027855)-- (1.61676,-3.28296);
\draw [line width=1.2pt] (1.61676,-3.28296)-- (4.71324,-3.50894);
\draw [line width=1.2pt] (9.438961420315312,-2.8045503975973185)-- (4.71324,-3.50894);
\draw [line width=1.2pt] (9.438961420315312,-2.804550397597317)-- (14.074576949647692,-3.4072720187562897);
\draw(-1.73736,-8.76668) circle (1.9872503066297407cm);
\draw (2.66976795360027,-10.064489531092718)-- (-1.73736,-8.76668);
\draw (-1.73736,-8.76668)-- (2.8399311085851693,-7.813019238897214);
\draw (-1.73736,-8.76668)-- (1.35912,-8.99266);
\draw (2.9883614203153126,-8.062290397597318)-- (-1.73736,-8.76668);
\draw (-1.73736,-8.76668)-- (2.8982555293323795,-9.369401621158971);
\draw [line width=1.2pt,dotted] (-2.960531108585169,-4.236620761102785)-- (1.6750844207472113,-4.839342382261757);
\draw [line width=1.2pt,dotted] (1.6750844207472113,-4.839342382261757)-- (4.771564420747211,-5.065322382261757);
\draw [line width=1.2pt,dotted] (9.497285841062524,-4.360932779859075)-- (4.771564420747211,-5.065322382261757);
\draw [line width=1.2pt,dotted] (9.497285841062524,-4.360932779859075)-- (14.074576949647692,-3.4072720187562893);
\draw [color=xfqqff] (-7.36765906218544,-2.9388112300100664)-- (2.448598333829196,-5.82949934760827);
\draw [color=xfqqff] (2.448598333829196,-5.82949934760827)-- (14.074576949647692,-3.4072720187562897);
\draw [color=xfqqff] (-7.36765906218544,-2.9388112300100664)-- (14.074576949647692,-3.4072720187562897);

\fill [color=black] (1.56,3.03) circle (3pt);
\draw[color=black] (1.9,2.6) node {$s_1$};
\draw[color=black] (0.1,3.2) node {$\scriptstyle\frac{2\pi}{3}+\epsi_1$};
\draw[color=black] (2.1,3.6) node {$\scriptstyle\frac{2\pi}{3}+\epsi_2$};
\fill [color=black] (7.63,-0.94) circle (3pt);
\draw[color=black] (7.1,-0.9) node {$t_4$};
\fill [color=black] (7.5,6.51) circle (3pt);
\draw[color=black] (6.9,6.5) node {$t_3$};
\fill [color=black] (4.66,2.8) circle (3pt);
\draw[color=black] (4.4,3.2) node {$s_2$};
\draw[color=black] (4.2,2.2) node {$\scriptstyle\frac{2\pi}{3}+\epsi_3$};
\draw[color=black] (6.2,2.4) node {$\scriptstyle\frac{2\pi}{3}+\epsi_4$};
\draw[color=qqqqff] (3,2.5) node {$e_0$};
\draw[color=qqqqff] (5.6,5) node {$e_3$};
\draw[color=qqqqff] (5.5,1) node {$e_4$};
\fill [color=black] (9.39,3.5) circle (3pt);
\draw[color=black] (9.76,3.95) node {$t'_4$};
\fill [color=black] (4.66,2.8) circle (3pt);
\draw[color=black] (7,3.7) node {$e'_4$};
\fill [color=black] (14.02,2.9) circle (3pt);
\draw[color=black] (14.39,3.24) node {$t'_3$};
\fill [color=black] (9.39,3.5) circle (3pt);
\fill [color=black] (14.02,2.9) circle (3pt);
\draw[color=black] (12,2.6) node {$e'_3$};
\fill [color=black] (-1.55,6.51) circle (3pt);
\draw[color=black] (-1,6.6) node {$t_1$};
\draw[color=qqqqff] (0.5,5) node {$e_1$};
\fill [color=black] (-1.76,-0.14) circle (3pt);
\draw[color=black] (-1.1,-0.1) node {$t_2$};
\draw[color=qqqqff] (0.1,1) node {$e_2$};
\fill [color=black] (-3.01,2.07) circle (3pt);
\draw[color=black] (-2.64,1.6) node {$t'_1$};
\fill [color=black] (-7.42,3.37) circle (3pt);
\draw[color=black] (-7.7,2.9) node {$t'_2$};
\draw[color=black] (-4.8,3.1) node {$e'_2$};
\draw[color=black] (-0.6,2.1) node {$e'_1$};
\draw [fill=black] (-2.9605311085851698,-4.2366207611027855) circle (3pt);
\draw[color=black] (-2.8,-3.7) node {$t'_1$};
\draw[color=black] (-3,-4.6) node {$-\epsi_1$};
\draw [fill=black] (-7.36765906218544,-2.9388112300100664) circle (3pt);
\draw[color=black] (-7.1,-2.48436) node {$t'_2$};
\draw[color=black] (-4.5,-3.4) node {$e'_2$};
\draw [fill=black] (1.61676,-3.28296) circle (3pt);
\draw[color=black] (2.01606,-2.83042) node {$s'_1$};
\draw[color=black] (1.7,-3.7) node {$-\epsi_2$};
\draw[color=black] (-0.5,-3.3) node {$e'_1$};
\draw [fill=black] (4.71324,-3.50894) circle (3pt);
\draw[color=black] (4.9,-3.0) node {$s'_2$};
\draw[color=black] (4.85,-3.95) node {$\epsi_3$};
\draw[color=black] (3.4,-3.0) node {$e'_0$};
\draw [fill=black] (9.438961420315312,-2.8045503975973185) circle (3pt);
\draw[color=black] (9.84234,-2.35126) node {$t'_4$};
\draw[color=black] (9.35,-3.2) node {$\epsi_4$};
\draw[color=black] (7.28682,-2.6) node {$e'_4$};
\draw [fill=black] (14.074576949647692,-3.4072720187562897) circle (3pt);
\draw[color=black] (14.3,-2.96352) node {$t'_3$};
\draw[color=black] (11.9187,-2.7) node {$e'_3$};

\draw [fill=black] (2.66976795360027,-10.064489531092718) circle (3pt);
\draw[color=black] (3.16072,-10.2) node {$e'_2$};
\draw [fill=black] (-1.73736,-8.76668) circle (3pt);
\draw [fill=black] (2.8399311085851693,-7.813019238897214) circle (3pt);
\draw[color=black] (3.2,-7.4) node {$e'_1$};
\draw [fill=black] (-1.73736,-8.76668) circle (3pt);
\draw [fill=black] (1.35912,-8.99266) circle (3pt);
\draw[color=black] (1.82972,-8.8) node {$e'_0$};
\draw [fill=black] (2.9883614203153126,-8.062290397597318) circle (3pt);
\draw[color=black] (3.48016,-8.2) node {$e'_4$};
\draw [fill=black] (-1.73736,-8.76668) circle (3pt);
\draw [fill=black] (-1.73736,-8.76668) circle (3pt);
\draw [fill=black] (2.8982555293323795,-9.369401621158971) circle (3pt);
\draw[color=black] (3.37368,-9.3) node {$e'_3$};
\draw [fill=black] (-2.960531108585169,-4.236620761102785) circle (3pt);
\draw[color=black] (0.1,-4.2) node {$e_3'$};
\draw [fill=black] (1.6750844207472113,-4.839342382261757) circle (3pt);
\draw[color=black] (3.32044,-4.5) node {$e_0'$};
\draw [fill=black] (9.497285841062524,-4.360932779859075) circle (3pt);
\draw [fill=black] (4.771564420747211,-5.065322382261757) circle (3pt);
\draw[color=black] (7.18034,-4.2) node {$e_4'$};
\draw [fill=black] (9.497285841062524,-4.360932779859075) circle (3pt);
\draw[color=black] (11.99856,-4.2) node {$e_1'$};
\draw [fill=uuuuuu] (2.448598333829196,-5.82949934760827) circle (3pt);
\draw[color=uuuuuu] (2.62832,-5.4658) node {$p$};
\end{tikzpicture}
\caption{Unfolding an $\epsi$-approximate Steiner tree on $4$ terminals}\label{fig:4unfold}
\end{figure}
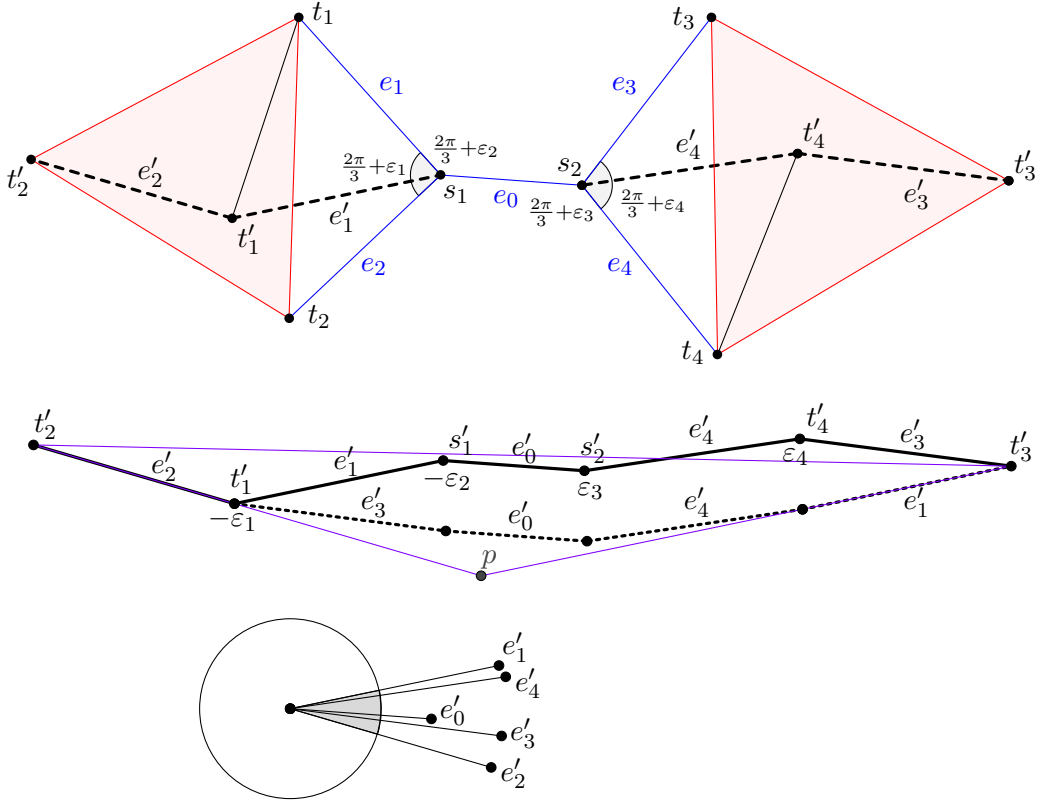
As in the proof of Proposition~\ref{prop:3unfold}, we unfold the tree into a polygonal line of total length $L(T)$, and with the distance between the endpoints a lower bound to $L(S(T))$.
Rotate $e_1$ by $\pi/3$ around $s_1$ to obtain $e_1'=s_1t_1'$.
Rotate $e_2$ by $-\pi/3$ around $t_1$ to obtain $e_2'=t_1't_2'$.
Rotate $e_3$ by $-\pi/3$ around $t_4$ to obtain $e_3'=t_4't_3'$.
Rotate $e_4$ by $\pi/3$ around $s_2$ to obtain $e_4'=s_2t_4'$.
This gives a polygonal line $P=t_2't_1's_1s_2t_4't_3'$ of length $L(T)$, with turns $-\epsi_1$ at $t_1'$, $-\epsi_2$ at $s_1$, $\epsi_3$ at $s_2$, and $\epsi_4$ at $t_4'$.
Note that the turn between any two of the five edges of $P$ will be at most $2\epsi$ in absolute value.
For instance, the absolute turn between $e_1'$ and $e_3'$ equals $\abs{-\epsi_2+\epsi_3+\epsi_4}\leq\abs{\epsi_2}+\abs{\epsi_3+\epsi_4}\leq2\epsi$.
If we reorder the edges of $P$ to make a new, convex polygonal line $P'$ with the same endpoints as $P$ (Fig.~\ref{fig:4unfold}, middle), then $P'$ will lie inside the triangle $\triangle t_2't_3'p$ bounded by $t_2't_3'$ and the lines through the first and last edges of $P'$.
The turn from the first edge to the last edge of $P'$ is exactly the maximum turn between two edges of $P$, so is at most $2\epsi$.
Hence, the angle at the apex of this triangle will be at least $\pi-2\epsi$, and by Lemma~\ref{lemma:reversetriangle}, $L(T)/\length{t_2't_3'}\leq 1/\cos\epsi$.
The proof of the upper bound concludes in the same way as that of Proposition~\ref{prop:3unfold}.

To show that $(\cos\epsi)^{-1}-1\geq G_2(\epsi,4)$, fix the above $\epsi$-approximate Steiner tree to have $\epsi_1=0$, $\epsi_2=\epsi$, $\epsi_3=-\epsi$, $\epsi_4=0$, $\length{s_1s_2}=\delta$ and $\length{s_1t_1}=\length{s_1t_2}=\length{s_2t_3}=\length{s_2t_4}=1$.
It is not difficult to see that the Melzak algorithm obtains a locally minimum Steiner tree $S(T)$ for any $\epsi<\pi/3$.
\end{proof}

The following generalises the idea in the above proof of estimating the length of a polygonal path in terms of the distance between its endpoints.
We do not know the history of this elementary result, but an extension of this lemma to curves of finite total curvature was proved by Schmidt \cite{Schmidt} (see also \cite[Theorem~5.8.1]{AR} and \cite[Proposition~7.1]{Sullivan}).
\begin{lemma}\label{schmidt}
Consider a planar polygonal path $p_0p_1\dots p_n$.
For each $i=1,\dots,n-1$, define the \define{turn} $\epsi_i$ at $p_i$ to be the signed angular measure in $[-\pi,\pi]$ by which the ray with source at $p_i$ in the direction opposite to $\Ray{p_ip_{i-1}}$ has to turn to coincide with the ray $\Ray{p_ip_{i+1}}$.
Let \[\kappa=\max_{1\leq i\leq j\leq n-1}\abs{\sum_{t=i}^j\epsi_t}.\]
If $\kappa<\pi$, then
\[ \frac{\sum\limits_{i=0}^{n-1}\length{p_i p_{i+1}}}{\length{p_0 p_n}}\leq\frac{1}{\cos(\kappa/2)}.\]
\end{lemma}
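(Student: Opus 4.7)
The plan is to rotate coordinates so that every edge's direction lies within $\kappa/2$ of the positive $x$-axis, and then to compare the horizontal component of the displacement $p_n-p_0$ with the sum of edge lengths. This reduces the polygonal inequality to a single projection bound and bypasses the need to pick out a ``worst'' pair of edges.

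First I would introduce the cumulative turns $\sigma_j:=\sum_{t=1}^{j}\epsi_t$ for $j=0,1,\dots,n-1$ (with $\sigma_0=0$). By the definition of turn, $\sigma_j$ is the directed angle from the first edge $p_0p_1$ to the edge $p_jp_{j+1}$. Since $\sum_{t=i}^{j}\epsi_t=\sigma_j-\sigma_{i-1}$, the definition of $\kappa$ is equivalent to $\abs{\sigma_j-\sigma_k}\leq\kappa$ for all $0\leq k<j\leq n-1$, so setting $M=\max_j\sigma_j$ and $m=\min_j\sigma_j$ we have $M-m\leq\kappa$. Because $\kappa<\pi$ and $\sigma_0=0$, all $\sigma_j$ lie in a common interval of length less than $\pi$ centred near $0$, so they may be treated unambiguously as directed angles in $\bR$ (no $2\pi$ wrap-around).

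Next I would rotate the plane so that the direction obtained by rotating $p_0p_1$ by the angle $(M+m)/2$ coincides with the positive $x$-axis. Then edge $p_ip_{i+1}$ makes signed angle $\theta_i:=\sigma_i-(M+m)/2$ with the $x$-axis, with $\abs{\theta_i}\leq(M-m)/2\leq\kappa/2<\pi/2$, so $\cos\theta_i\geq\cos(\kappa/2)>0$ uniformly in $i$.

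Finally, projecting the telescoping identity $p_n-p_0=\sum_{i=0}^{n-1}(p_{i+1}-p_i)$ onto the $x$-axis yields
\[ \length{p_0p_n}\;\geq\;\sum_{i=0}^{n-1}\length{p_ip_{i+1}}\cos\theta_i\;\geq\;\cos(\kappa/2)\sum_{i=0}^{n-1}\length{p_ip_{i+1}}, \]
which is exactly the stated bound. I do not foresee a genuine obstacle; the one step deserving care is the wrap-around check for the $\sigma_j$, and this is immediate from $\abs{\sigma_j}=\abs{\sigma_j-\sigma_0}\leq\kappa<\pi$.
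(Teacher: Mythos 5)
Your proof is correct, and it takes a genuinely different route from the paper's. The paper first proves the case $n=2$ (the reverse triangle inequality, Lemma~\ref{lemma:reversetriangle}), and for $n\geq 3$ rearranges the edges into a convex polygonal path with the same endpoints; since $\kappa<\pi$ this convex path is trapped in the triangle bounded by the chord $p_0p_n$ and the lines through its first and last edges, whose apex has exterior angle at most $\kappa$, and the two-edge case then finishes the argument. You instead observe that the direction of the edge $p_ip_{i+1}$ is the initial direction rotated by the partial sum $\sigma_i$, that the hypothesis confines all the $\sigma_i$ to an interval of length at most $\kappa<\pi$, and that projecting the telescoping sum $p_n-p_0=\sum_i(p_{i+1}-p_i)$ onto the bisecting direction yields $\length{p_0p_n}\geq\cos(\kappa/2)\sum_i\length{p_ip_{i+1}}$ in one line. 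Your identification of $\kappa$ with $\max_{0\leq k<j\leq n-1}\abs{\sigma_j-\sigma_k}$ is exactly right, and since the $\sigma_j$ are honest real numbers (sums of the signed turns, each measured in $[-\pi,\pi]$) there is indeed no wrap-around issue to worry about. What your approach buys is brevity and the avoidance of the convex-rearrangement step, which the paper asserts without detailed justification; what the paper's approach buys is a direct parallel with Schmidt's original argument for curves of finite total curvature and an explicit geometric picture (the enclosing triangle) that also makes the equality case visible.
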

\begin{proof}
The case $n=2$ is just Lemma~\ref{lemma:reversetriangle}, so assume that $n\geq 3$.
Since $\kappa<\pi$, the $n$ unit vectors \[u_i=\norm{p_{i+1}-p_i}^{-1}(p_{i+1}-p_i)\] all lie in an open half circle.
The polygonal path $p_0p_1\dots p_n$ can be replaced with a convex polygonal path $p_o'p_1'\dots p_n'$ such that $p_0=p_o'$, $p_n=p_n'$ and each segment of the new path is a translation of a segment of the original path, selected so that the turns all have the same sign.
Then $p_0'p_1'\dots p_n'$ is a convex polygonal path with the same $\kappa$ and the same endpoints as the original polygonal path.
Let the lines $p_0'p_1'$ and $p_{n-1}'p_n'$ intersect in $q$.
Since $\kappa<\pi$, $p_0'p_1'\dots p_n'$ is contained in $\triangle p_o'qp_n'$.
By a well-known elementary geometric inequality, $\sum_{i=1}^{n-1}\length{p_i' p_{i+1}'}\leq\length{p_0'q}+\length{qp_n'}$.
It remains to apply the case $n=2$ of the lemma to the path $p_0'qp_n'$.
\end{proof}
\begin{proof}[Proof of Theorem~\ref{thm:plane}]
We choose a root edge of an $\epsi$-approximate Steiner tree $T$ on $n$ terminals and unfold the two parts of $T$ separated by the root edge to obtain a polygonal path $P$ with $2n-3$ edges, of the same length as $T$.
See Figure~\ref{fig:unfold}, where the blue $\epsi$-approximate tree has been unfolded.
The turn at each internal vertex of the polygonal path $P$ is indicated.
\begin{figure}
\centering
\definecolor{ffqqqq}{rgb}{1.,0.,0.}
\definecolor{yqqqyq}{rgb}{0.5019607843137255,0.,0.5019607843137255}
\definecolor{qqwuqq}{rgb}{0.,0.39215686274509803,0.}
\definecolor{zzttqq}{rgb}{0.6,0.2,0.}
\definecolor{ubqqys}{rgb}{0.29411764705882354,0.,0.5098039215686274}
\definecolor{qqqqff}{rgb}{0.,0.,1.}
\begin{tikzpicture}[thick,line cap=round,line join=round,>=triangle 45,scale=0.9]
\clip(0.9572076338717613,-3.8097111621500117) rectangle (16.170091294260665,6.077246747488182);
\draw [shift={(4.5408757587119695,0.7088269083006826)},color=qqwuqq,fill=qqwuqq,fill opacity=0.1] (0,0) -- (160.8389474728972:0.4249408843684051) arc (160.8389474728972:242.47884704013373:0.4249408843684051) -- cycle;
\draw [shift={(4.5408757587119695,0.7088269083006826)},color=yqqqyq,fill=yqqqyq,fill opacity=0.1] (0,0) -- (23.11420798332681:0.4249408843684051) arc (23.11420798332681:160.8389474728972:0.4249408843684051) -- cycle;
\draw [shift={(5.702380842652275,1.204591273397154)},color=qqwuqq,fill=qqwuqq,fill opacity=0.1] (0,0) -- (-156.8857920166732:0.4249408843684051) arc (-156.8857920166732:-59.478429238377394:0.4249408843684051) -- cycle;
\draw [shift={(6.82139183815574,-0.693478010115051)},color=qqwuqq,fill=qqwuqq,fill opacity=0.1] (0,0) -- (120.52157076162264:0.4249408843684051) arc (120.52157076162264:227.6701112296518:0.4249408843684051) -- cycle;
\draw [shift={(6.82139183815574,-0.693478010115051)},color=ubqqys,fill=ubqqys,fill opacity=0.1] (0,0) -- (-132.32988877034825:0.4249408843684051) arc (-132.32988877034825:17.85532014033951:0.4249408843684051) -- cycle;
\draw [shift={(6.82139183815574,-0.693478010115051)},color=ffqqqq,fill=ffqqqq,fill opacity=0.1] (0,0) -- (17.855320140339497:0.4249408843684051) arc (17.855320140339497:120.52157076162264:0.4249408843684051) -- cycle;
\draw [shift={(5.702380842652275,1.204591273397154)},color=yqqqyq,fill=yqqqyq,fill opacity=0.1] (0,0) -- (-59.478429238377394:0.4249408843684051) arc (-59.478429238377394:57.97354577543265:0.4249408843684051) -- cycle;
\draw [shift={(6.4834,2.4532)},color=qqwuqq,fill=qqwuqq,fill opacity=0.1] (0,0) -- (-122.02645422456737:0.4249408843684051) arc (-122.02645422456737:35.78803442955335:0.4249408843684051) -- cycle;
\draw [shift={(6.4834,2.4532)},color=yqqqyq,fill=yqqqyq,fill opacity=0.1] (0,0) -- (35.78803442955335:0.4249408843684051) arc (35.78803442955335:115.47254322022175:0.4249408843684051) -- cycle;
\draw [color=qqqqff] (5.702380842652275,1.204591273397154)-- (6.4834,2.4532);
\draw [color=qqqqff] (5.702380842652275,1.204591273397154)-- (4.5408757587119695,0.7088269083006826);
\draw [color=qqqqff] (5.702380842652275,1.204591273397154)-- (6.82139183815574,-0.693478010115051);
\draw [color=qqqqff] (6.4834,2.4532)-- (8.209532060425861,3.6975777950251247);
\draw [color=qqqqff] (6.4834,2.4532)-- (5.8298631079627965,3.825060060335646);
\draw [color=qqqqff] (4.5408757587119695,0.7088269083006826)-- (3.3,1.14);
\draw [color=qqqqff] (4.5408757587119695,0.7088269083006826)-- (3.4620185849627187,-1.3617716589397337);
\draw [color=qqqqff] (6.82139183815574,-0.693478010115051)-- (5.60836004837736,-2.0251862930860765);
\draw [color=qqqqff] (6.82139183815574,-0.693478010115051)-- (8.320555170328587,-0.21055214674460948);
\draw [color=zzttqq] (4.5408757587119695,0.7088269083006826)-- (3.5470310285160984,-0.15021647583451392);
\draw [color=zzttqq] (3.547031028516099,-0.15021647583451392)-- (1.2144114813716116,-0.2511980399327891);
\draw [color=zzttqq] (6.4834,2.4532)-- (6.2688032478159075,4.570263112128134);
\draw [color=zzttqq] (6.268803247815907,4.570263112128134)-- (7.130100464485228,5.8221726931105025);
\draw [color=zzttqq] (6.82139183815574,-0.693478010115051)-- (7.368169146749613,-2.4098484971467453);
\draw [color=zzttqq] (7.368169146749613,-2.409848497146745)-- (8.535976878659373,-3.466699095545339);
\draw [color=zzttqq] (5.702380842652275,1.204591273397154)-- (7.905662558068505,1.224648580861166);
\draw [color=zzttqq] (7.905662558068505,1.224648580861166)-- (9.665471656440758,0.8399863768004967);
\draw [color=zzttqq] (9.665471656440758,0.8399863768004969)-- (11.164634988613606,1.3229122401709383);
\draw [color=zzttqq] (11.164634988613606,1.3229122401709388)-- (12.636471443912471,1.270834172366917);
\draw [color=zzttqq] (12.636471443912471,1.270834172366917)-- (14.362603504338333,2.515211967392042);
\draw [color=zzttqq] (14.362603504338333,2.515211967392042)-- (15.877437613044858,2.3952614880387655);
\begin{scriptsize}
\draw [fill=black] (4.5408757587119695,0.7088269083006826) circle (1.5pt);
\draw [fill=black] (4.8,0.5) node {$-\epsi_2$};
\draw [fill=black] (8.209532060425861,3.6975777950251247) circle (1.5pt);
\draw [fill=black] (5.8298631079627965,3.825060060335646) circle (1.5pt);
\draw [fill=black] (3.3,1.14) circle (1.5pt);
\draw [fill=black] (3.4620185849627187,-1.3617716589397337) circle (1.5pt);
\draw [fill=black] (8.320555170328587,-0.21055214674460948) circle (1.5pt);
\draw [fill=black] (5.60836004837736,-2.0251862930860765) circle (1.5pt);
\draw[color=qqqqff] (4.3,-0.3) node {$a$};
\draw[color=qqqqff] (3.7,1.3) node {$b$};
\draw[color=qqqqff] (5.1,1.15) node {$c$};
\draw[color=qqqqff] (6.1,0) node {$d$};
\draw[color=qqqqff] (6.0,-1.2) node {$e$};
\draw[color=qqqqff] (7.713767695329403,-0.7) node {$f$};
\draw[color=qqqqff] (5.9,1.9) node {$g$};
\draw[color=qqqqff] (7.5,2.9) node {$h$};
\draw[color=qqqqff] (5.9,3.2) node {$i$};
\draw[color=zzttqq] (4,0.5) node {$b$};
\draw [fill=black] (3.547031028516099,-0.15021647583451392) circle (1.5pt);
\draw [fill=black] (3.5,-0.4) node {$-\epsi_1$};
\draw [fill=black] (1.2144114813716116,-0.2511980399327891) circle (1.5pt);
\draw[color=zzttqq] (2.5,-0.4) node {$a$};
\draw [fill=black] (6.4834,2.4532) circle (1.5pt);
\draw(6.2,2.45) node {$\epsi_7$};
\draw[color=zzttqq] (6.55,3.9) node {$h$};
\draw [fill=black] (6.268803247815907,4.570263112128134) circle (1.5pt);
\draw(6.2,4.9) node {$\epsi_8$};
\draw [fill=black] (7.130100464485228,5.8221726931105025) circle (1.5pt);
\draw[color=zzttqq] (7.0055328880487275,5.1) node {$i$};
\draw [fill=black] (6.82139183815574,-0.693478010115051) circle (1.5pt);
\draw (6.95,-0.45) node {$\epsi_4$};
\draw[color=zzttqq] (6.8,-1.6) node {$e$};
\draw [fill=black] (7.368169146749613,-2.409848497146745) circle (1.5pt);
\draw (7.3,-2.65) node {$\epsi_5$};
\draw [fill=black] (8.535976878659373,-3.466699095545339) circle (1.5pt);
\draw[color=zzttqq] (7.7,-3.1) node {$f$};
\draw [fill=black] (5.702380842652275,1.204591273397154) circle (1.5pt);
\draw [fill=black] (5.4,1.3) node {$\epsi_3$};
\draw [fill=black] (7.905662558068505,1.224648580861166) circle (1.5pt);
\draw [fill=black] (7.9,1.5) node {$\epsi_4$};
\draw[color=zzttqq] (6.835556534301365,1) node {$d$};
\draw [fill=black] (9.665471656440758,0.8399863768004967) circle (1.5pt);
\draw [fill=black] (9.8,0.55) node {$\epsi_5$};
\draw[color=zzttqq] (8.776119906250415,0.8) node {$e$};
\draw [fill=black] (11.164634988613606,1.3229122401709383) circle (1.5pt);
\draw[color=zzttqq] (10.518377532160875,0.9) node {$f$};
\draw [fill=black] (11.2,1.6) node {$-\epsi_4-\epsi_5-\epsi_6$};
\draw [fill=black] (12.636471443912471,1.270834172366917) circle (1.5pt);
\draw [fill=black] (13.0,1.2) node {$\epsi_7$};
\draw[color=zzttqq] (11.8,1.05) node {$g$};
\draw [fill=black] (14.362603504338333,2.515211967392042) circle (1.5pt);
\draw [fill=black] (14.362603504338333,2.8) node {$\epsi_8$};
\draw[color=zzttqq] (13.662940076487073,1.7) node {$h$};
\draw [fill=black] (15.877437613044858,2.3952614880387655) circle (1.5pt);
\draw[color=zzttqq] (15.136068475630879,2.7) node {$i$};
\end{scriptsize}
{\tiny
\draw[color=qqwuqq] (3.3,0.6) node {$\frac{2\pi}{3}+\epsi_1$};
\draw[color=yqqqyq] (4.5,1.4) node {$\frac{2\pi}{3}+\epsi_2$};
\draw[color=qqwuqq] (5.5,0.6) node {$\frac{2\pi}{3}+\epsi_3$};
\draw[color=qqwuqq] (5.8,-0.6226545293869838) node {$\frac{2\pi}{3}+\epsi_4$};
\draw[color=ubqqys] (7.75,-1.25) node {$\frac{2\pi}{3}+\epsi_5$};
\draw[color=ffqqqq] (7.3,0.1) node {$\frac{2\pi}{3}-\epsi_4-\epsi_5$};
\draw[color=yqqqyq] (6.7,1.6) node {$\frac{2\pi}{3}+\epsi_6$};
\draw[color=qqwuqq] (7.6,2.2) node {$\frac{2\pi}{3}+\epsi_7$};
\draw[color=yqqqyq] (6.7,3.2) node {$\frac{2\pi}{3}+\epsi_8$};
}
\end{tikzpicture}
\caption{Proof of Theorem~\ref{thm:plane}}\label{fig:unfold}
\end{figure}
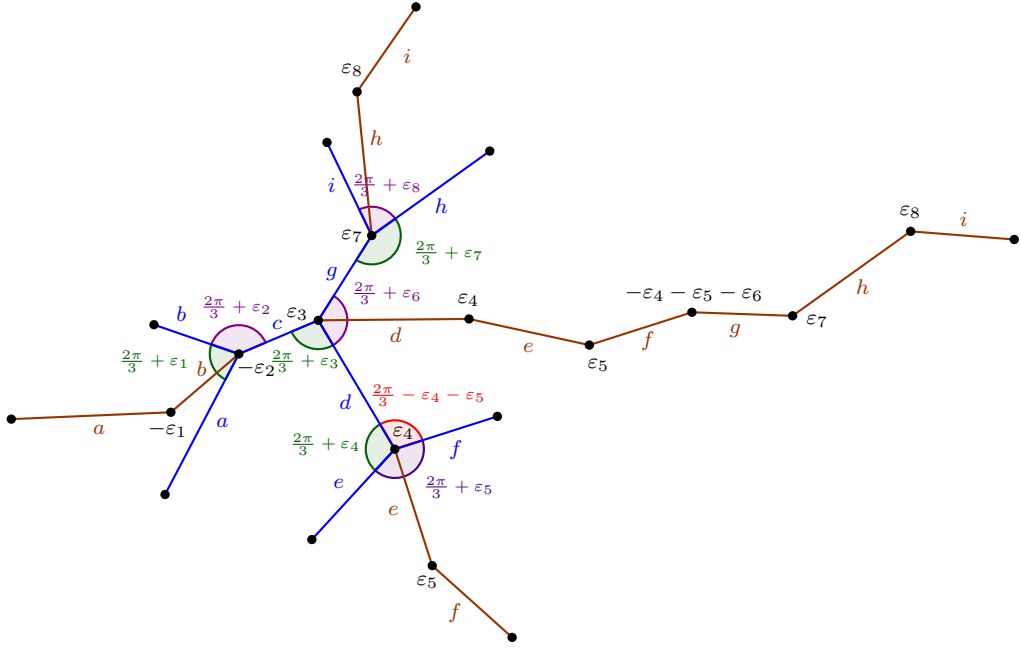
The quantity $\kappa$ of Lemma~\ref{schmidt} is the maximum absolute turn between any two edges of $P$.
For example, the total turn between edge $a$ and edge $h$ on $P$ in Fig.~\ref{fig:unfold} equals $-\epsi_1-\epsi_2+\epsi_3+\epsi_4+\epsi_5-\epsi_4-\epsi_5-\epsi_6 = (-\epsi_1-\epsi_2) -\epsi_6$, which is the sum of the errors at the two Steiner points on the path between edges $a$ and $h$ in the tree.
Thus, the absolute turn between $a$ and $h$ in $P$ is at most $2\epsi$.
In general, since there are at most $n-2$ Steiner points in a full Steiner topology on $n$ terminals, there are at most $n-2$ Steiner points on the path between any two edges in an $\epsi$-approximate Steiner tree, each contributing an error of absolute value at most $\epsi$.
It follows that $\kappa\leq(n-2)\epsi$.
We now apply Lemma~\ref{schmidt} to obtain that $L(T)/L(S(T))\leq 1/\cos(\frac12(n-2)\epsi)$.
\end{proof}

\section{\texorpdfstring{Construction of an $\epsi$-Approximate Full Binary Tree in the Plane}{Construction of an Epsilon-Approximate Full Binary Tree in the Plane}}\label{section:construction}
In this section we prove Theorems~\ref{theorem:lowerbound} and \ref{theorem:lowerbound'} by constructing a sequence of $\epsi$-approximate Steiner trees $T_k$ ($k\in\bN$) for which it is possible to calculate the ratio between their length and the length of a locally minimum Steiner tree on the same terminals, if $\epsi\leq 1/k^2$.
A somewhat similar construction is made in \cite{PST}.
The calculation will make essential use of complex numbers.
Using complex numbers to solve problems in classical Euclidean geometry is an old trick \cite{AA, Hahn, Schwerdtfeger}, and even in the geometric Steiner tree literature there are papers where complex numbers appear \cite{Biggs, Booth}.
\begin{proof}[Proof of Theorems~\ref{theorem:lowerbound} and \ref{theorem:lowerbound'}]
Throughout the proof we denote the largest integer not greater than $x$ by $\lfloor x\rfloor$.
Fix $k\in\bN$.
We describe an $\epsi$-approximate Steiner tree $T_k$ with $2^k+1$ terminals $p_i$ (for $i=0$ and $2^k\leq i\leq 2^{k+1}-1$), $2^k-1$ Steiner points $p_i$ ($1\leq i\leq 2^k-1$) and $2^{k+1}-1$ edges $e_i=p_ip_{\lfloor i/2\rfloor}$ ($1\leq i\leq 2^{k+1}-1$).
Let each $e_i$ have length $2^{-\lfloor\log_2 i\rfloor}$, and let the angles at the edges incident to the Steiner point $p_i$ be $\myangle p_{2i}p_ip_{2i+1}=2\pi/3$, $\myangle p_{2i+1}p_ip_{\lfloor i/2\rfloor}=2\pi/3-\epsi$, and $\myangle p_{\lfloor i/2\rfloor}p_i p_{2i}=2\pi/3+\epsi$ (Figure~\ref{angles}).
\begin{figure}
\centering
\begin{tikzpicture}[scale=2]
\draw (0,0) -- (180:1cm);
\fill (180:1cm) circle (1pt) node[left]{$p_{\lfloor i/2\rfloor}$};
\draw (0,0) -- (70:0.5cm);
\fill (70:0.5cm) circle (1pt) node[above right]{$p_{2i+1}$};
\draw (0,0) -- (-50:0.5cm);
\fill (-50:0.5cm) circle (1pt) node[below right]{$p_{2i}$};
\fill (0,0) circle (1pt) node[right=0.5mm]{$p_i$} node[below left=0mm] {$\scriptscriptstyle2\pi/3+\epsi$} node[above left=0mm] {$\scriptscriptstyle2\pi/3-\epsi$} node[above right=1mm] {$\scriptscriptstyle2\pi/3$};
\end{tikzpicture}
\caption{The angles around a Steiner point in the binary tree construction}\label{angles}
\end{figure}
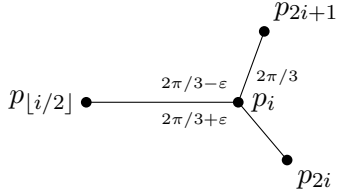
This determines the tree uniquely up to congruence.
See Figure~\ref{k=3} for the case $k=3$.
\begin{figure}
\centering
\definecolor{ffqqqq}{rgb}{1,0,0}
\definecolor{yqyqyq}{rgb}{0.5,0.5,0.5}
\definecolor{qqwuqq}{rgb}{0,0.39,0}
\definecolor{qqqqff}{rgb}{0,0,1}
\definecolor{xdxdff}{rgb}{0.49,0.49,1}
\definecolor{uuuuuu}{rgb}{0.27,0.27,0.27}
\begin{tikzpicture}[scale=0.9,line cap=round,line join=round,>=triangle 45,x=5.0cm,y=5.0cm]
\clip(0.43,-4.98) rectangle (3.56,-0.86);
\draw [shift={(2.28,-3.92)},color=qqwuqq,fill=qqwuqq,fill opacity=0.1] (0,0) -- (36.6:0.09) arc (36.6:156.6:0.09) -- cycle;
\draw [shift={(2.68,-3.62)},color=qqwuqq,fill=qqwuqq,fill opacity=0.1] (0,0) -- (-17:0.09) arc (-17:103:0.09) -- cycle;
\draw [shift={(1.82,-3.72)},color=qqwuqq,fill=qqwuqq,fill opacity=0.1] (0,0) -- (103:0.09) arc (103:223:0.09) -- cycle;
\draw [shift={(2.92,-3.69)},color=qqwuqq,fill=qqwuqq,fill opacity=0.1] (0,0) -- (-70.6:0.09) arc (-70.6:49.4:0.09) -- cycle;
\draw [shift={(2.62,-3.38)},color=qqwuqq,fill=qqwuqq,fill opacity=0.1] (0,0) -- (49.4:0.09) arc (49.4:169.4:0.09) -- cycle;
\draw [shift={(1.76,-3.48)},color=qqwuqq,fill=qqwuqq,fill opacity=0.1] (0,0) -- (49.4:0.09) arc (49.4:169.4:0.09) -- cycle;
\draw [shift={(1.63,-3.89)},color=qqwuqq,fill=qqwuqq,fill opacity=0.1] (0,0) -- (169.4:0.09) arc (169.4:289.4:0.09) -- cycle;
\fill[line width=1.2pt,color=yqyqyq,fill=yqyqyq,fill opacity=0.1] (1.64,-3.45) -- (1.84,-3.38) -- (1.68,-3.24) -- cycle;
\fill[line width=1.2pt,color=yqyqyq,fill=yqyqyq,fill opacity=0.1] (1.68,-4.01) -- (1.51,-3.87) -- (1.47,-4.08) -- cycle;
\fill[line width=1.2pt,color=yqyqyq,fill=yqyqyq,fill opacity=0.1] (2.5,-3.36) -- (2.7,-3.28) -- (2.54,-3.14) -- cycle;
\fill[line width=1.2pt,color=yqyqyq,fill=yqyqyq,fill opacity=0.1] (3,-3.6) -- (2.96,-3.81) -- (3.16,-3.74) -- cycle;
\fill[line width=1.2pt,color=yqyqyq,fill=yqyqyq,fill opacity=0.1] (1.47,-4.08) -- (1.68,-3.24) -- (0.85,-3.48) -- cycle;
\fill[line width=1.2pt,color=yqyqyq,fill=yqyqyq,fill opacity=0.1] (2.54,-3.14) -- (3.16,-3.74) -- (3.37,-2.9) -- cycle;
\fill[line width=1.2pt,color=yqyqyq,fill=yqyqyq,fill opacity=0.1] (0.85,-3.48) -- (3.37,-2.9) -- (1.6,-1.01) -- cycle;
\draw [line width=2pt,color=qqqqff] (2.28,-4.92)-- (2.28,-3.92);
\draw [line width=1.6pt,color=qqqqff] (1.82,-3.72)-- (2.28,-3.92)-- (2.68,-3.62);
\draw [line width=1.6pt,color=qqqqff] (2.62,-3.38)-- (2.68,-3.62)-- (2.92,-3.69);
\draw [line width=1.6pt,color=qqqqff] (1.63,-3.89)-- (1.82,-3.72)-- (1.76,-3.48);
\draw [line width=1.6pt,color=qqqqff] (2.96,-3.81)-- (2.92,-3.69)-- (3,-3.6);
\draw [line width=1.6pt,color=qqqqff] (2.7,-3.28)-- (2.62,-3.38)-- (2.5,-3.36);
\draw [line width=1.6pt,color=qqqqff] (1.84,-3.38)-- (1.76,-3.48)-- (1.64,-3.45);
\draw [line width=1.6pt,color=qqqqff] (1.51,-3.87)-- (1.63,-3.89)-- (1.68,-4.01);
\draw [line width=1.2pt,color=yqyqyq] (1.64,-3.45)-- (1.84,-3.38);
\draw [line width=1.2pt,color=yqyqyq] (1.84,-3.38)-- (1.68,-3.24);
\draw [line width=1.2pt,color=yqyqyq] (1.68,-3.24)-- (1.64,-3.45);
\draw [line width=1.2pt,color=yqyqyq] (1.68,-4.01)-- (1.51,-3.87);
\draw [line width=1.2pt,color=yqyqyq] (1.51,-3.87)-- (1.47,-4.08);
\draw [line width=1.2pt,color=yqyqyq] (1.47,-4.08)-- (1.68,-4.01);
\draw [line width=1.2pt,color=yqyqyq] (2.5,-3.36)-- (2.7,-3.28);
\draw [line width=1.2pt,color=yqyqyq] (2.7,-3.28)-- (2.54,-3.14);
\draw [line width=1.2pt,color=yqyqyq] (2.54,-3.14)-- (2.5,-3.36);
\draw [line width=1.2pt,color=yqyqyq] (3,-3.6)-- (2.96,-3.81);
\draw [line width=1.2pt,color=yqyqyq] (2.96,-3.81)-- (3.16,-3.74);
\draw [line width=1.2pt,color=yqyqyq] (3.16,-3.74)-- (3,-3.6);
\draw(1.72,-3.36) circle (0.62cm);
\draw(2.58,-3.26) circle (0.62cm);
\draw(1.55,-3.99) circle (0.63cm);
\draw(3.04,-3.72) circle (0.62cm);
\draw [line width=1.2pt,color=yqyqyq] (1.47,-4.08)-- (1.68,-3.24);
\draw [line width=1.2pt,color=yqyqyq] (1.68,-3.24)-- (0.85,-3.48);
\draw [line width=1.2pt,color=yqyqyq] (0.85,-3.48)-- (1.47,-4.08);
\draw [line width=1.2pt,color=yqyqyq] (2.54,-3.14)-- (3.16,-3.74);
\draw [line width=1.2pt,color=yqyqyq] (3.16,-3.74)-- (3.37,-2.9);
\draw [line width=1.2pt,color=yqyqyq] (3.37,-2.9)-- (2.54,-3.14);
\draw [line width=1.2pt,color=yqyqyq] (0.85,-3.48)-- (3.37,-2.9);
\draw [line width=1.2pt,color=yqyqyq] (3.37,-2.9)-- (1.6,-1.01);
\draw [line width=1.2pt,color=yqyqyq] (1.6,-1.01)-- (0.85,-3.48);
\draw(1.33,-3.6) circle (2.5cm);
\draw(3.02,-3.26) circle (2.5cm);
\draw(1.94,-2.46) circle (7.47cm);
\draw (1.6,-1.01)-- (2.28,-4.92);
\draw (1.6,-1.01)-- (2.28,-3.92);
\draw [line width=1.6pt,color=ffqqqq] (2.11,-3.95)-- (2.28,-4.92);
\draw (2.11,-3.95)-- (0.85,-3.48);
\draw (2.11,-3.95)-- (3.37,-2.9);
\draw [line width=1.6pt,color=ffqqqq] (1.78,-3.83)-- (2.11,-3.95);
\draw [line width=1.6pt,color=ffqqqq] (2.11,-3.95)-- (2.61,-3.54);
\draw (1.78,-3.83)-- (1.47,-4.08);
\draw (1.78,-3.83)-- (1.68,-3.24);
\draw (2.61,-3.54)-- (2.54,-3.14);
\draw [color=uuuuuu] (2.61,-3.54)-- (3.16,-3.74);
\draw [line width=1.6pt,color=ffqqqq] (1.66,-3.92)-- (1.78,-3.83);
\draw [line width=1.6pt,color=ffqqqq] (1.78,-3.83)-- (1.72,-3.48);
\draw [line width=1.6pt,color=ffqqqq] (1.72,-3.48)-- (1.64,-3.45);
\draw [line width=1.6pt,color=ffqqqq] (1.72,-3.48)-- (1.84,-3.38);
\draw [line width=1.6pt,color=ffqqqq] (2.61,-3.54)-- (2.58,-3.39);
\draw [line width=1.6pt,color=ffqqqq] (2.58,-3.39)-- (2.5,-3.36);
\draw [line width=1.6pt,color=ffqqqq] (2.58,-3.39)-- (2.7,-3.28);
\draw [line width=1.6pt,color=ffqqqq] (2.93,-3.66)-- (3,-3.6);
\draw [line width=1.6pt,color=ffqqqq] (2.93,-3.66)-- (2.96,-3.81);
\draw [line width=1.6pt,color=ffqqqq] (2.61,-3.54)-- (2.93,-3.66);
\draw [line width=1.6pt,color=ffqqqq] (1.66,-3.92)-- (1.51,-3.87);
\draw [line width=1.6pt,color=ffqqqq] (1.66,-3.92)-- (1.68,-4.01);
\begin{scriptsize}
\fill [color=qqqqff] (2.28,-4.92) circle (1.5pt);
\draw [color=black] (2.28,-4.92) circle (1.5pt);
\draw (2.28,-4.92) node[right] {$p_0$};
\fill [color=qqqqff] (2.28,-3.92) circle (1.5pt);
\draw [color=black] (2.28,-3.92) circle (1.5pt);
\draw (2.35,-3.85) node[below=2mm] {$p_1$};
\fill [color=qqqqff] (2.68,-3.62) circle (1.5pt);
\draw [color=black] (2.68,-3.62) circle (1.5pt);
\draw (2.69,-3.62) node [below] {$p_2$};
\fill [color=qqqqff] (1.82,-3.72) circle (1.5pt);
\draw [color=black] (1.82,-3.72) circle (1.5pt);
\draw (1.84,-3.72) node[below] {$p_3$};
\fill [color=qqqqff] (2.92,-3.69) circle (1.5pt);
\draw [color=black] (2.92,-3.69) circle (1.5pt);
\draw (2.93,-3.71) node[left] {$p_4$};
\fill [color=qqqqff] (2.62,-3.38) circle (1.5pt);
\draw [color=black] (2.62,-3.38) circle (1.5pt);
\draw (2.62,-3.38) node[right] {$p_5$};
\fill [color=qqqqff] (1.76,-3.48) circle (1.5pt);
\draw [color=black] (1.76,-3.48) circle (1.5pt);
\draw (1.76,-3.50) node[right] {$p_6$};
\fill [color=qqqqff] (1.63,-3.89) circle (1.5pt);
\draw [color=black] (1.63,-3.89) circle (1.5pt);
\draw (1.63,-3.89) node[above] {$p_7$};
\fill [color=black] (2.96,-3.81) circle (1.5pt);
\draw [color=black] (2.96,-3.81) circle (1.5pt);
\draw (2.96,-3.81) node[below] {$p_8$};
\fill [color=black] (3,-3.6) circle (1.5pt);
\draw [color=black] (3,-3.6) circle (1.5pt);
\draw (3.03,-3.6) node[above] {$p_9$};
\fill [color=black] (2.7,-3.28) circle (1.5pt);
\draw [color=black] (2.7,-3.28) circle (1.5pt);
\draw (2.7,-3.28) node[right] {$p_{10}$};
\fill [color=black] (2.5,-3.36) circle (1.5pt);
\draw [color=black] (2.5,-3.36) circle (1.5pt);
\draw (2.51,-3.36) node[left] {$p_{11}$};
\fill [color=black] (1.84,-3.38) circle (1.5pt);
\draw [color=black] (1.84,-3.38) circle (1.5pt);
\draw (1.84,-3.38) node[right] {$p_{12}$};
\fill [color=black] (1.64,-3.45) circle (1.5pt);
\draw [color=black] (1.64,-3.45) circle (1.5pt);
\draw (1.64,-3.45) node[left] {$p_{13}$};
\fill [color=black] (1.51,-3.87) circle (1.5pt);
\draw [color=black] (1.51,-3.87) circle (1.5pt);
\draw (1.48,-3.87) node[above] {$p_{14}$};
\fill [color=black] (1.68,-4.01) circle (1.5pt);
\draw [color=black] (1.68,-4.01) circle (1.5pt);
\draw (1.68,-4.01) node[right] {$p_{15}$};
\fill [color=black] (1.68,-3.24) circle (1.5pt);
\draw [color=black] (1.68,-3.24) circle (1.5pt);
\draw (1.68,-3.24) node[above] {$q_6$};
\fill [color=black] (1.47,-4.08) circle (1.5pt);
\draw [color=black] (1.47,-4.08) circle (1.5pt);
\draw (1.47,-4.08) node[below] {$q_7$};
\fill [color=black] (2.54,-3.14) circle (1.5pt);
\draw [color=black] (2.54,-3.14) circle (1.5pt);
\draw (2.54,-3.14) node[left] {$q_5$};
\fill [color=black] (3.16,-3.74) circle (1.5pt);
\draw [color=black] (3.16,-3.74) circle (1.5pt);
\draw (3.15,-3.76) node[right] {$q_4$};
\fill [color=black] (0.85,-3.48) circle (1.5pt);
\draw [color=black] (0.85,-3.48) circle (1.5pt);
\draw (0.85,-3.48) node[left] {$q_3$};
\fill [color=black] (3.37,-2.9) circle (1.5pt);
\draw [color=black] (3.37,-2.9) circle (1.5pt);
\draw (3.37,-2.9) node[right] {$q_2$};
\fill [color=black] (1.6,-1.01) circle (1.5pt);
\draw [color=black] (1.6,-1.01) circle (1.5pt);
\draw (1.6,-1.01) node[above] {$q_1$};
\fill [color=ffqqqq] (2.11,-3.95) circle (1.5pt);
\draw [color=black] (2.11,-3.95) circle (1.5pt);
\draw (2.13,-3.95) node[below left] {$s_1$};
\fill [color=ffqqqq] (1.78,-3.83) circle (1.5pt);
\draw [color=black] (1.78,-3.83) circle (1.5pt);
\draw (1.79,-3.83) node[below=0mm] {$s_3$};
\fill [color=ffqqqq] (2.61,-3.54) circle (1.5pt);
\draw [color=black] (2.61,-3.54) circle (1.5pt);
\draw (2.61,-3.53) node[left] {$s_2$};
\fill [color=ffqqqq] (1.66,-3.92) circle (1.5pt);
\draw [color=black] (1.66,-3.92) circle (1.5pt);
\draw (1.66,-3.92) node[right] {$s_7$};
\fill [color=ffqqqq] (1.72,-3.48) circle (1.5pt);
\draw [color=black] (1.72,-3.48) circle (1.5pt);
\draw (1.74,-3.50) node[left] {$s_6$};
\fill [color=ffqqqq] (2.58,-3.39) circle (1.5pt);
\draw [color=black] (2.58,-3.39) circle (1.5pt);
\draw[color=black] (2.60,-3.41) node[left] {$s_5$};
\fill [color=ffqqqq] (2.93,-3.66) circle (1.5pt);
\draw [color=black] (2.93,-3.66) circle (1.5pt);
\draw (2.92,-3.665) node[above] {$s_4$};
\end{scriptsize}
\end{tikzpicture}
\caption{Construction of $T_k$, $k=3$}\label{k=3}
\end{figure}
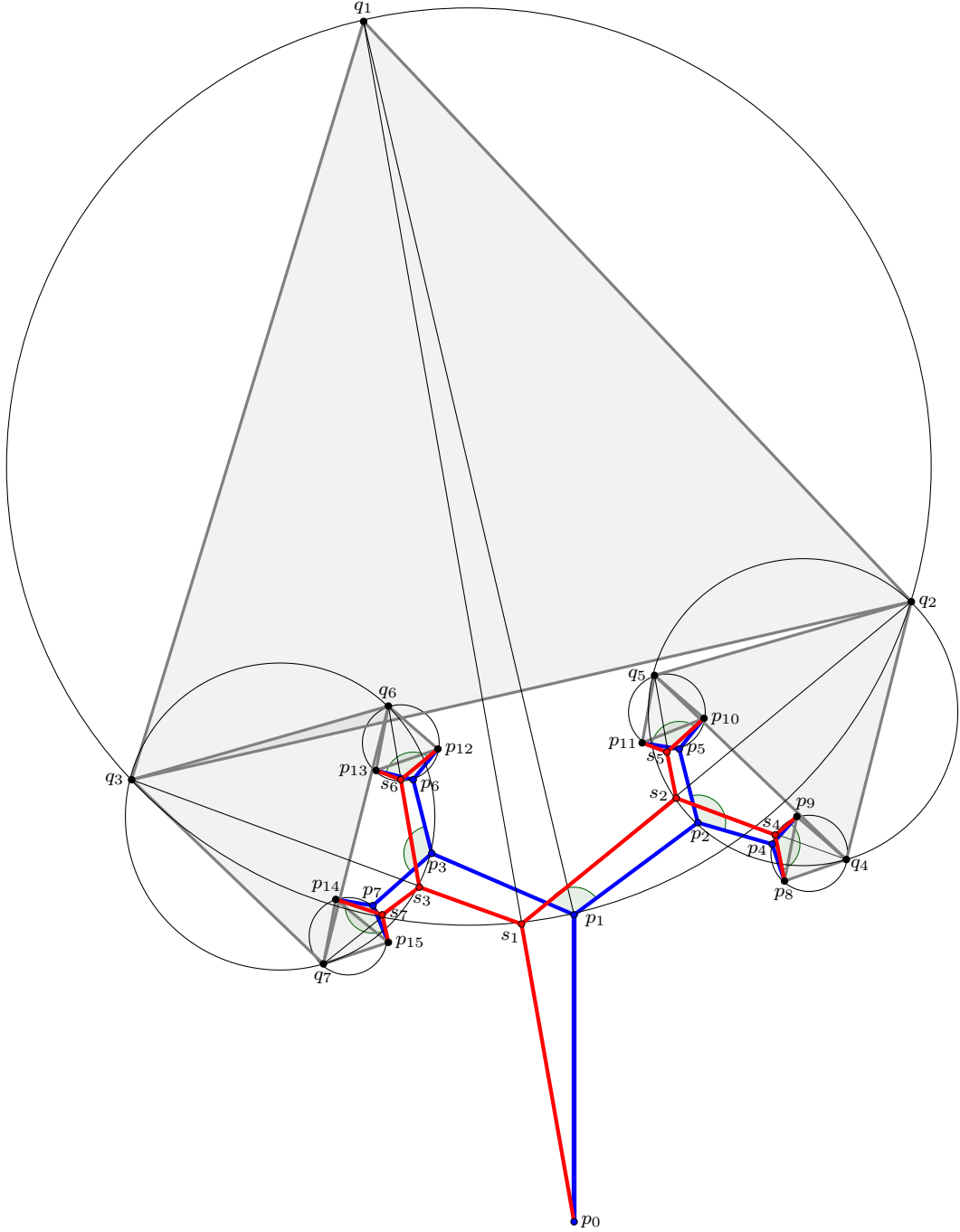
Since there are $2^j$ edges of length $2^{-j+1}$ ($j=0,1,\dots,k$),
\begin{equation}\label{lengthTk}
L(T_k)=k+1.
\end{equation}
We construct this tree recursively, using complex numbers.
Let $p_0=0\in\bC$ and $p_1=1\in\bC$.
Then $e_1=p_0p_1$.
Let $\omega=e^{i\pi/3}$ and $z=e^{i\epsi}$.

Once $p_{\lfloor i/2\rfloor}$ and $p_i$ have been defined, define $p_{2i}$ and $p_{2i+1}$ as in Figure~\ref{angles}.
If we walk from $p_{\lfloor i/2\rfloor}$ to $p_i$ and then turn in the direction of $p_{2i}$, the turn is a right turn by an angle of $\pi/3-\epsi$.
Also, $\length{p_ip_{2i}}=\frac12 \length{p_{\lfloor i/2\rfloor}p_i}$.
Therefore,
\begin{equation}\label{*}
p_{2i}-p_i = \frac12(p_i-p_{\lfloor i/2\rfloor})\omega^{-1}z.
\end{equation}
Similarly, if we turn instead in the direction of $p_{2i+1}$, this is a left turn by an angle of $\pi/3+\epsi$, which gives
\begin{equation}\label{***}
p_{2i+1}-p_i = \frac12(p_i-p_{\lfloor i/2\rfloor})\omega z.
\end{equation}
We obtain the following recurrence:
\begin{equation}\label{**}
\left.\begin{aligned}
p_0 &=0, \quad p_1=1,\\
p_{2i} &= p_i+\frac12(p_i-p_{\lfloor i/2\rfloor})\omega^{-1}z,\quad i\geq 1\\
p_{2i+1} &= p_i+\frac12(p_i-p_{\lfloor i/2\rfloor})\omega z,\quad i\geq 1.\\
\end{aligned}\right\}
\end{equation}
To describe its solution, we have to consider the sequence of left and right turns as we walk from $p_0$ to $p_i$.
This can be found from the binary expression of $i$.
Let $h(i)=\lfloor\log_2 i\rfloor$.
Let $b_0,b_1,\dots,b_{h(i)}\in\{0,1\}$ be the unique values such that \[i=\sum_{j=0}^{h(i)-1} b_j 2^j + 2^{h(i)}.\]
If we replace $0$ by $R$ and $1$ by $L$ in the sequence $b_{h(i)-1},\dots, b_0$, we obtain the left and right turns in the path from $p_0$ to $p_i$.
Let $a_j(i)$ be the number of $1$s in $b_{h(i)-1}, \cdots, b_{h(i)-j}$ minus the number of $0$s in $b_{h(i)-1}, \cdots, b_{h(i)-j}$.
In particular, $a_0(i)=0$.
\begin{lemma}
For each $i\geq 1$,
\begin{equation}\label{10}
p_i=\sum_{j=0}^{h(i)} \omega^{a_j(i)}\left(\frac{z}{2}\right)^j.
\end{equation}
\end{lemma}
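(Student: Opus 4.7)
The plan is to prove the formula by strong induction on $i\geq 1$, using the recurrence to pass from $p_i$ and $p_{\lfloor i/2\rfloor}$ to $p_{2i}$ and $p_{2i+1}$.  The base case $i=1$ is immediate: $h(1)=0$, $a_0(1)=0$, and the right-hand side reduces to the single term $\omega^0(z/2)^0=1=p_1$.

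For the inductive step, the key intermediate identity I would establish is the telescoping relation
\[
p_i-p_{\lfloor i/2\rfloor}=\omega^{a_{h(i)}(i)}(z/2)^{h(i)}\qquad(i\geq 1),
\]
under the convention $p_0=0$ covering the case $i=1$.  This follows from the induction hypothesis applied to both $p_i$ and $p_{\lfloor i/2\rfloor}$, together with the observation that the binary expansion of $\lfloor i/2\rfloor$ is the leading $h(i)$ digits of that of $i$.  Hence $h(\lfloor i/2\rfloor)=h(i)-1$, and $a_j(\lfloor i/2\rfloor)=a_j(i)$ for $0\leq j\leq h(i)-1$, so the sums defining $p_i$ and $p_{\lfloor i/2\rfloor}$ coincide except for the single extra $j=h(i)$ term in the former.

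Substituting the telescoping identity into the recurrence and simplifying gives
\[
p_{2i}=p_i+\omega^{a_{h(i)}(i)-1}(z/2)^{h(i)+1},\qquad p_{2i+1}=p_i+\omega^{a_{h(i)}(i)+1}(z/2)^{h(i)+1}.
\]
To finish I would match these with the stated formula evaluated at $2i$ and $2i+1$.  Since $2i$ (respectively $2i+1$) is obtained from $i$ by appending a trailing $0$ (resp.\ $1$) in binary, one has $h(2i)=h(2i+1)=h(i)+1$, the counts $a_j(2i)$ and $a_j(2i+1)$ both coincide with $a_j(i)$ for $0\leq j\leq h(i)$, and the new final counts pick up the contribution of the appended bit: $a_{h(i)+1}(2i)=a_{h(i)}(i)-1$ and $a_{h(i)+1}(2i+1)=a_{h(i)}(i)+1$.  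Adding the new final term to the inductively known sum for $p_i$ therefore reproduces the stated formula.

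The only real obstacle is keeping the binary-expansion bookkeeping straight, since the digits $b_j$ and the counts $a_j$ are indexed from the leading bit downward while the recurrence grows $i$ at its trailing end; once the relationship between the bits of $i$ and those of $2i$, $2i+1$, and $\lfloor i/2\rfloor$ is set up correctly, the calculation is routine.  The identity itself has a transparent geometric meaning which makes it memorable: $p_i$ is just the sum of the $h(i)+1$ edge vectors along the unique path from $p_0$ to $p_i$ in $T_k$, where the $j$th such vector has length $2^{-j}$ and its direction is obtained by accumulating rotations $\omega^{\pm 1}z$ prescribed by the corresponding binary digit of $i$.
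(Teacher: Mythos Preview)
Your proof is correct and follows essentially the same approach as the paper: both establish the telescoping identity $p_i-p_{\lfloor i/2\rfloor}=\omega^{a_{h(i)}(i)}(z/2)^{h(i)}$ and combine it with the recurrence, using the same binary-expansion bookkeeping relating $a_j$ at $i$, $\lfloor i/2\rfloor$, $2i$, and $2i+1$. The only cosmetic difference is that the paper proves the telescoping identity first by a separate induction directly from the recurrence, whereas you derive it as a consequence of the induction hypothesis for the main formula.
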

\begin{proof}
Observe that $h(2i)=h(2i+1)=h(i)+1$, 
\[ a_j(2i)=a_j(2i+1)=a_j(i)\text{ for each $j=0,\dots,h(i)$, and}\]
\begin{equation}\label{+}
a_{h(i)}(i)=a_{h(2i)}(2i)+1=a_{h(2i)-1}(2i)=a_{h(2i+1)}(2i+1)-1=a_{h(2i+1)-1}(2i+1).
\end{equation}
It then follows by induction, using \eqref{*} and \eqref{***}, that
\begin{equation}\label{4}
p_i-p_{\lfloor i/2\rfloor}=\omega^{a_{h(i)}(i)}\left(\frac{z}{2}\right)^{h(i)}.
\end{equation}
Finally, by induction and \eqref{**} we obtain \eqref{10}.
\end{proof}
We remark that each $p_i$ is a polynomial in $z$ of degree $h(i)$ with coefficients in the ring $\bZ[1/2,\omega]$.
Next, we apply Melzak's Algorithm to the terminals of $T_k$ to obtain the locally minimum Steiner tree $S(T_k)$ with the same topology.
Surprisingly, it turns out that the Steiner points of $S(T_k)$ are also polynomials in $z$ with coefficients in $\bZ[1/2,\omega]$.

The first step in Melzak's algorithm is to calculate the so-called quasi-terminals $q_i$ ($1\leq i\leq 2^{k+1}-1$) \cite{BTW}.
For each $i=2^k,\dots,2^{k+1}-1$, let $q_i=p_i$.
Then, for each $i=2^k-1,\dots,1$, once $q_{2i}$ and $q_{2i+1}$ have been defined, let $q_i$ be the unique point such that the triangle $\upDelta_i=\triangle q_i q_{2i}q_{2i+1}$ is equilateral, and such that $p_i$ and $q_i$ are on opposite sides of the line $q_{2i}q_{2i+1}$.
Let $C_i$ be the circumcircle of $\upDelta_i$ and $c_i$ its centre (Fig.~\ref{circle-tri}).
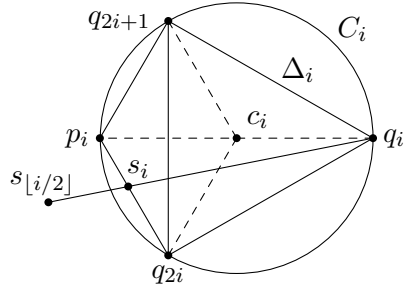
\begin{figure}
\centering
\begin{tikzpicture}[scale=0.6,line cap=round,line join=round,>=triangle 45,x=1.0cm,y=1.0cm]
\clip(-2.66,-1.94) rectangle (10.48,5.66);
\draw (1.46,4.48)-- (1.46,-0.7);
\draw (1.46,-0.7)-- (5.95,1.89);
\draw (5.95,1.89)-- (1.46,4.48);
\draw(2.96,1.89) circle (2.99cm);
\draw[dashed] (5.95,1.89)-- (-0.04,1.89);
\draw[dashed] (1.46,4.48)-- (2.96,1.89);
\draw[dashed] (1.46,-0.7)-- (2.96,1.89);
\draw (1.46,4.48)-- (-0.04,1.89);
\draw (1.46,-0.7)-- (-0.04,1.89);
\draw (5.95,1.89) -- (-1.1685180047812245,0.47320821327142154); 
\fill (1.46,4.48) circle (2.5pt) node[left=1mm] {$q_{2i+1}$};
\fill (1.46,-0.7) circle (2.5pt) node[below] {$q_{2i}$};
\draw (4.3,3.4) node {$\upDelta_i$};
\fill (5.95,1.89) circle (2.5pt) node[right] {$q_i$};
\draw (5.5,4.3) node {$C_i$};
\fill  (2.96,1.89) circle (2.5pt) node[above right] {$c_i$};
\fill  (-0.04,1.89) circle (2.5pt) node[left] {$p_i$};
\fill (-1.1685180047812245,0.47320821327142154) circle (2.5pt);
\draw (-1.3,0.9) node {$s_{\lfloor i/2\rfloor}$};
\fill (0.581447577711931,0.8216974325156448) circle (2.5pt);
\draw (0.8,1.2) node {$s_i$};
\end{tikzpicture}
\caption{Melzak's Algorithm}\label{circle-tri}
\end{figure}
Since $\myangle p_{2i}p_ip_{2i+1}=2\pi/3$, $\myangle p_{\lfloor i/2\rfloor}p_iq_i=\pi-\epsi$ and $\length{p_ip_{2i}}=\length{p_ip_{2i+1}}$, we obtain by induction that for $i=1,\dots,2^k-1$, $\myangle q_{2i}p_iq_{2i+1}=2\pi/3$, $\myangle q_ip_iq_{2i}=\myangle q_ip_iq_{2i+1}=\pi/3$, hence $p_i$ is on $C_i$ and the centre $c_i$ of $C_i$ is the midpoint of $p_i$ and $q_i$.
Also, $\length{p_iq_i}=2\length{p_iq_{2i}}=2\length{p_iq_{2i+1}}$.
Since $c_iq_{2i}p_iq_{2i+1}$ is a parallelogram, we have
\begin{align}
c_i &=p_i+(q_{2i}-p_i)+(q_{2i+1}-p_i) \notag\\
\intertext{and}
q_i &= p_i+2(c_i-p_i)\notag\\
&= p_i+2(q_{2i}-p_i)+2(q_{2i+1}-p_i).\label{5}
\end{align}
If $2^{k-1}\leq i< 2^k$, we have $q_{2i}=p_{2i}$ and $q_{2i+1}=p_{2i+1}$, hence
\begin{align*}
q_i &= p_i+2(p_{2i}-p_i)+2(p_{2i+1}-p_i)\\
&= p_i+(p_i-p_{\lfloor i/2\rfloor})\omega^{-1}z+(p_i-p_{\lfloor i/2\rfloor})\omega z\quad\text{by \eqref{*} and \eqref{***}}\\
&= p_i + (p_i-p_{\lfloor i/2\rfloor})z\\
&= p_i + \omega^{a_{k-1}(i)}\left(\frac{z}{2}\right)^{k-1}z\quad\text{by \eqref{4}},
\end{align*}
By induction, we obtain that for each $i<2^{k-1}$, (use \eqref{5}, \eqref{4}, \eqref{+}; see the Appendix)
\begin{equation}\label{qiformula}
q_i=p_i+\omega^{a_{h(i)}(i)}\left(\frac{z}{2}\right)^{h(i)}\sum_{j=1}^{k-h(i)}z^j 
\quad(i\geq 1).
\end{equation}
Therefore, each $q_i$ is a polynomial in $z$ of degree $k$.
In particular,
\begin{equation}\label{8}
q_1=\sum_{j=0}^k z^j. 
\end{equation}
Also, the centres
\begin{equation}\label{ciformula}
c_i=\frac12(p_i+q_i)=p_i+\frac12\omega^{a_{h(i)}(i)}\left(\frac{z}{2}\right)^{h(i)}\sum_{j=1}^{k-h(i)} z^j 
\end{equation}
are polynomials in $z$ of degree $k$.
In particular,
\begin{equation}\label{9}
c_1=1+\frac12\sum_{j=1}^k z^j. 
\end{equation}

Finally, we construct the Steiner points $s_i$, $1\leq i\leq 2^k-1$.
Formally, we let $s_0=p_0=0$.
Once $s_{\lfloor i/2\rfloor}$ has been constructed, $s_i$ is the point where the minor arc $\arc{q_{2i}q_{2i+1}}$ of $C_i$ intersects the segment $s_{\lfloor i/2\rfloor}q_i$.
See Figure~\ref{circle-tri}.
This gives the shortest Steiner tree for this tree topology as long as $\arc{q_{2i}q_{2i+1}}$ intersects $s_{\lfloor i/2\rfloor}q_i$.
This happens iff $\myangle s_{\lfloor i/2\rfloor}q_ip_i\leq\pi/6$ and $s_{\lfloor i/2\rfloor}$ is outside $C_i$.
For $i\geq 1$, we calculate $s_i$ by solving $\abs{s_i-c_i}=\abs{q_i-c_i}$, where
\begin{equation}\label{7}
s_i=q_i-\lambda(q_i-s_{\lfloor i/2\rfloor}),\quad 0<\lambda<1.
\end{equation}
If we square $\abs{q_i-\lambda(q_i-s_{\lfloor i/2\rfloor})-c_i}=\abs{q_i-c_i}$ and use conjugates, we can solve for $\lambda$:
\[ \lambda = \frac{q_i-c_i}{q_i-s_{\lfloor i/2\rfloor}} + \frac{\conj{q_i}-\conj{c_i}}{\conj{q_i}-\conj{s_{\lfloor i/2\rfloor}}},\]
and substitute into \eqref{7} to determine $s_i$:
\begin{align}
s_i &= q_i - \left(\frac{q_i-c_i}{q_i-s_{\lfloor i/2\rfloor}} + \frac{\conj{q_i}-\conj{c_i}}{\conj{q_i}-\conj{s_{\lfloor i/2\rfloor}}}\right)(q_i-s_{\lfloor i/2\rfloor})\notag\\
&= c_i - \frac{(\conj{q_i}-\conj{c_i})(q_i-s_{\lfloor i/2\rfloor})}{\conj{q_i}-\conj{s_{\lfloor i/2\rfloor}}}.\label{si-formula}
\end{align}
In particular, using \eqref{8} and \eqref{9}, $s_1=\frac12 + \frac12 z^k$.
It follows by induction (use \eqref{si-formula}, \eqref{ciformula}, \eqref{qiformula}; see the Appendix) that
\begin{equation}\label{siformula}
s_i = p_i + \frac{\omega^{a_{h(i)}(i)}}{2^{h(i)+1}}\left(\sum_{j=0}^{k-h(i)-1} z^j\right) 
(z^{h(i)+1}-1)\qquad(i=1,\dots,2^k-1).
\end{equation}
Next, we calculate the edge lengths of the Steiner tree.
\begin{align*}
s_{2i}-s_i &= p_{2i} + \frac{\omega^{a_{h(2i)}(2i)}}{2^{h(2i)+1}}\left(\sum_{j=0}^{k-h(2i)-1}z^j\right) 
(z^{h(2i)+1}-1)\\
 &\qquad - p_i - \frac{\omega^{a_{h(i)}(i)}}{2^{h(i)+1}}\left(\sum_{j=0}^{k-h(i)-1}z^j\right) 
 (z^{h(i)+1}-1)\quad\text{by \eqref{siformula}}\\
 &= \frac{\omega^{a_{h(2i)}(2i)}}{2^{h(2i)}} \Biggl[z^{h(2i)} + \frac{1}{2}\left(\sum_{j=0}^{k-h(2i)-1}z^j)\right) 
 (z^{h(2i)+1}-1)\\
 &\qquad\qquad\qquad-\omega\left(\sum_{j=0}^{k-h(2i)}z^j\right) 
 (z^{h(2i)}-1)\Biggr] \quad\text{by \eqref{+} and \eqref{4}.}
 \end{align*}
 Similarly,
\begin{align*}
s_{2i+1}-s_i &= \frac{\omega^{a_{h(2i+1)}(2i+1)}}{2^{h(2i+1)}} \Biggl[z^{h(2i+1)} + \frac{1}{2}\left(\sum_{j=0}^{k-h(2i+1)-1}z^j\right) 
(z^{h(2i+1)+1}-1)\\
 &\quad\qquad\qquad\qquad\qquad-\omega^{-1}\left(\sum_{j=0}^{k-h(2i+1)}z^j\right) 
 (z^{h(2i+1)}-1)\biggr].
 \end{align*}
 Let $h\in\set{1,\dots,k}$ and define 
 \[ p_{k,h}(z) = z^h + \frac12\left(\sum_{j=0}^{k-h-1}z^j\right) 
 (z^{h+1}-1)-\omega\left(\sum_{j=0}^{k-h}z^j\right) 
 (z^h-1)\]
 and
 \[ q_{k,h}(z) = z^h + \frac12\left(\sum_{j=0}^{k-h-1}z^j\right) 
 (z^{h+1}-1)-\omega^{-1}\left(\sum_{j=0}^{k-h}z^j\right) 
 (z^h-1).\]
 It follows that $s_{2i}-s_i=0$ iff $p_{k,h(2i)}(z) = 0$, and $s_{2i+1}-s_i=0$ iff $q_{k,h(2i+1)}(z) = 0$.
Since $p_{k,h}(1)=q_{k,h}(1)=1$, both $p_{k,h}(z)-1$ and $q_{k,h}(z)-1$ have $z-1$ as a factor.
In fact,
\begin{align*}
\abs{p_{k,h}(z)-1} &= \biggl\lvert \sum_{j=0}^{h-1}z^j 
+\frac12\sum_{j=0}^{k-h-1}z^j 
\sum_{j=0}^{h}z^j 
-\omega\sum_{j=0}^{k-h}z^j 
\sum_{j=0}^{h-1}z^j 
\biggr\rvert\cdot\abs{z-1}\\
&\leq \left(h+\frac12(k-h)(h+1)+(k-h+1)h\right)\abs{z-1}\\
&< k^2\abs{z-1},
\end{align*}
and similarly, $\abs{q_{k,h}(z)-1} < k^2\abs{z-1}$.
It follows that if $\abs{z-1}<1/k^2$, then $p_{k,h}(z)\neq0$ and $q_{k,h}(z)\neq0$.
Therefore, the Melzak construction gives a non-degenerate locally minimum Steiner tree for all $\epsi\in[0,1/k^2)$, since $\abs{z-1}\leq\epsi$.

The length of the Steiner tree is
\[L(S(T_k))=\length{p_0q_1}=\abs{\sum_{j=0}^k z^j}.\]
The modulus of this sum of complex numbers can be interpreted as the distance between the endpoints of a convex polygonal path consisting of $k+1$ segments of unit length with a turn of $\epsi$ between two adjacent segments.
This is easily calculated to be $\sin[(k+1)\epsi/2]/\sin(\epsi/2)$.
Thus, the ratio between the length of the approximate tree $T_k$ and the length of the locally minimum Steiner tree $S(T_k)$ is (recall \eqref{lengthTk})
\[ \frac{L(T_k)}{L(S(T_k))} = \frac{(k+1)\sin(\epsi/2)}{\sin[(k+1)\epsi/2]} \geq 1+\frac{k^2+2k}{24}\epsi^2.\]
Therefore, $G_2(\epsi,2^k+1) > (k\epsi)^2/24$ if $\epsi<1/k^2$, and Theorems~\ref{theorem:lowerbound} and \ref{theorem:lowerbound'} follow.
\end{proof}

\section{Conclusions}\label{section:conclusion}
\begin{enumerate}
\item In this paper we considered the planar case of the conjectures of Rubinstein, Wormald and Weng \cite{RWW}.
Although we proved one of their conjectures when $\epsi$ is sufficiently small in terms of the number of terminals (Corollary~\ref{cor:plane}), the full conjecture is still open even in the plane, a setting that one would have expected to be simple.
It is especially frustrating that for a small constant $\epsilon$ (for instance $\epsi=10^{-3}$), the best upper bound we have is $F_2(\epsi,n)=O(n)$ (Proposition~\ref{plane}).

\item In the $\epsi$-approximate Steiner tree constructed in Section~\ref{section:construction}, the edge lengths are halved at each new level of the tree.
If we let the edge lengths decay sufficiently fast, then most likely the topology of the $\epsi$-approximate tree will be the same as the topology of a minimum Steiner tree for $\epsi$ sufficiently small \cite{BRT}.
Thus, the locally minimum tree constructed using the Melzak algorithm as in Section~\ref{section:construction} will most likely be a minimum Steiner tree on the terminals.
This would then give a (miniscule) lower bound for $\overline{F}_2(\epsi,n)$.
However, the calculations are much harder when the ratio at which the edge lengths change are not exactly $1/2$, and we have not carried these out.
For similar ideas, see the papers \cite{BRT} and \cite{PST}.

\item In the proof of Theorems~\ref{theorem:lowerbound} and \ref{theorem:lowerbound'} (Section~\ref{section:construction}) we showed that the polynomials $p_{k,h}$ and $q_{k,h}$ do not have roots at distance smaller than $1/k^2$ from $1$.
We suspect that these polynomials actually have roots at distance approximately $c/k^2$ to $1$.

\item It is to be expected that the lower bound in Theorem~\ref{theorem:lowerbound'} should hold for general $n$, even if it turns out that $G_2(\epsi,n)$ is not monotone in $n$.
Most likely the proof can be adapted for values of $n$ other than $2^k+1$ by modifying the construction in Section~\ref{section:construction}, but we did not look at this in detail.

\item In the definitions of $F_d$, $\overline{F}_d$ and $G_d$ in Sections~\ref{section:problem} and \ref{section:results}, we could have included all $\epsi$-approximate trees on $n$ points instead of considering only the full ones.
However, by decomposing a Steiner tree into full components, it can be shown that the values of $F_d$, $d\geq 2$, and $G_d$, $d\geq 3$, will not change (use the inequality $\frac{a+b}{c+d}\leq\max\{\frac{a}{c},\frac{b}{d}\}$ and Propositions~\ref{monotone} and \ref{monotoneG}).
We do not know whether the values of $\overline{F}_d$ or $G_2$ will also be unchanged.
\end{enumerate}

\section*{Acknowledgements}
{\small
 Part of this paper was written while Swanepoel was visiting the Department of Mechanical Engineering, University of Melbourne, in March 2015.
Doreen Thomas is partially supported by the Melbourne Research Grant Support Scheme.
 The authors thank the referees for their helpful comments that lead to an improved paper.
 }

\appendix
\section*{Appendix. Induction steps}\label{AppendixA}
{\small
Here we provide the details of the induction proofs of \eqref{qiformula} and \eqref{siformula}.

First we assume that \eqref{qiformula} holds for $q_{2i}$ and $q_{2i+1}$:
\begin{align*}
q_{2i} &= p_{2i}+\omega^{a_{h(2i)}(2i)}\left(\frac{z}{2}\right)^{h(2i)}(z^1+z^2+\dots+z^{k-h(2i)})\\
q_{2i+1} &= p_{2i+1}+\omega^{a_{h(2i+1)}(2i+1)}\left(\frac{z}{2}\right)^{h(2i+1)}(z^1+z^2+\dots+z^{k-h(2i+1)})
\end{align*}
Then
\begin{align*}
q_i &= p_i + 2(q_{2i}-p_i) + 2(q_{2i+1}-p_i)\qquad\text{by \eqref{5}}\\
&= p_i + 2\left(p_{2i}-p_i + \omega^{a_{h(i)}(i)-1}\left(\frac{z}{2}\right)^{h(i)+1}\left(z^1+z^2+\dots+z^{k-h(i)-1}\right)\right)\\
&\phantom{\mathrel{=}} \phantom{p_i} + 2\left(p_{2i+1}-p_i + \omega^{a_{h(i)}(i)+1}\left(\frac{z}{2}\right)^{h(i)+1}\left(z^1+z^2+\dots+z^{k-h(i)-1}\right)\right)\quad\text{by \eqref{+}}\\
&= p_i + (p_i - p_{\lfloor i/2\rfloor})(\omega^{-1}+\omega)z + 2\omega^{a_{h(i)}(i)-1}\left(\frac{z}{2}\right)^{h(i)+1}(z^1+z^2+\dots+z^{k-h(i)-1})\\
&\mathrel{\phantom{=}} \phantom{p_i} + 2\omega^{a_{h(i)}(i)+1}\left(\frac{z}{2}\right)^{h(i)+1}(z^1+z^2+\dots+z^{k-h(i)-1})\quad\text{by \eqref{*} and \eqref{***}}\\
&= p_i + \omega^{a_{h(i)}(i)}\left(\frac{z}{2}\right)^{h(i)}z + \omega^{a_{h(i)}(i)}\left(\frac{z}{2}\right)^{h(i)}(z^2+z^3+\dots+z^{k-h(i)})\quad\text{by \eqref{4}}\\
&= p_i + \omega^{a_{h(i)}(i)}\left(\frac{z}{2}\right)^{h(i)}(z^1+z^2+\dots+z^{k-h(i)}),
\end{align*}
which is \eqref{qiformula}. \qed

Next, assume that
\[s_i = p_i + \frac{\omega^{a_{h(i)}(i)}}{2^{h(i)+1}} (1+z+\dots+z^{k-h(i)-1})(z^{h(i)+1}-1).\]
We have to show that
\begin{equation}\label{s2ind}
s_{2i} = p_{2i} + \frac{\omega^{a_{h(2i)}(2i)}}{2^{h(2i)+1}} (1+z+\dots+z^{k-h(2i)-1})(z^{h(2i)+1}-1)
\end{equation}
and
\begin{equation}\label{s2i+1nd}
s_{2i+1} = p_{2i+1} + \frac{\omega^{a_{h(2i+1)}(2i+1)}}{2^{h(2i+1)+1}} (1+z+\dots+z^{k-h(2i+1)-1})(z^{h(2i+1)+1}-1).
\end{equation}
By \eqref{si-formula},
\begin{equation}\label{s2i}
s_{2i} = c_{2i} - \frac{(\conj{q_{2i}}-\conj{c_{2i}})(q_{2i}-s_{i})}{\conj{q_{2i}}-\conj{s_{i}}}.
\end{equation}
By \eqref{ciformula},
\begin{equation}\label{c2i}
c_{2i} = p_{2i}+\frac12\omega^{a_{h(2i)}(2i)}\left(\frac{z}{2}\right)^{h(2i)}\left(z^1+z^2+\dots+z^{k-h(2i)}\right),
\end{equation}
and by \eqref{qiformula},
\begin{equation}\label{q2i}
\conj{q_{2i}} - \conj{c_{2i}} = \frac12\omega^{-a_{h(2i)}(2i)}(2z)^{-h(2i)}\left(z^{-1}+z^{-2}+\dots+z^{-k+h(2i)}\right).
\end{equation}
Next,
\begin{align*}
q_{2i}-s_i &= q_{2i}-p_i-\frac{\omega^{a_{h(i)}(i)}}{2^{h(i)+1}}\left(1+z+\dots+z^{k-h(i)-1}\right)(z^{h(i)+1}-1)\quad\text{by \eqref{si-formula}}\\
 &= q_{2i}-p_{2i}+p_{2i}-p_i-\frac{\omega^{a_{h(2i)}(2i)+1}}{2^{h(2i)}}\left(1+z+\dots+z^{k-h(2i)}\right)(z^{h(2i)}-1)\quad\text{by \eqref{+}}\\
&= \omega^{a_{h(2i)}(2i)}\left(\frac{z}{2}\right)^{h(2i)}\left(z+z^2+\dots+z^{k-h(2i)}\right) + \omega^{a_{h(2i)}(2i)}\left(\frac{z}{2}\right)^{h(2i)}\\
&\mathrel{\phantom{=}} - \frac{\omega^{a_{h(2i)}(2i)+1}}{2^{h(2i)}}\left(1+z+\dots+z^{k-h(2i)}\right)(z^{h(2i)}-1)\quad\text{by \eqref{qiformula} and \eqref{4}}\\
&= \left(\omega^{a_{h(2i)}(2i)}\left(\frac{z}{2}\right)^{h(2i)}(1-\omega) + \frac{\omega^{a_{h(2i)}(2i)+1}}{2^{h(2i)}}\right)\left(1+z+\dots+z^{k-h(2i)}\right)\\
&= \frac{\omega^{a_{h(2i)}(2i)}}{2^{h(2i)}}\left(\omega^{-1} z^{h(2i)} + \omega\right)\left(1+z+\dots+z^{k-h(2i)}\right),
\end{align*}
hence
\begin{align}
\frac{q_{2i}-s_{i}}{\conj{q_{2i}}-\conj{s_{i}}} &= \frac{\omega^{a_{h(2i)}(2i)}\left(\omega^{-1} z^{h(2i)} + \omega\right)\left(1+z+\dots+z^{k-h(2i)}\right)}{\omega^{-a_{h(2i)}(2i)}\left(\omega z^{-h(2i)} + \omega^{-1}\right)\left(1+z^{-1}+\dots+z^{-k+h(2i)}\right)}\notag\\
&= \omega^{2a_{h(2i)}(2i)}z^k.\label{q2i2}
\end{align}
If we substitute \eqref{c2i}, \eqref{q2i} and \eqref{q2i2} into \eqref{s2i}, we obtain \eqref{s2ind}.
The derivation of \eqref{s2i+1nd} is analogous. \qed
}

\end{document}